\Crefname{paragraph}{Section}{Sections}
\newcommand{\ensemblenombre}[1]{\mathbb{#1}}
\newcommand{\R}{} % Probleme LateXML
\renewcommand{\R}{\ensemblenombre{R}}
\newcommand{\C}{\ensemblenombre{C}}
\renewcommand{\leq}{\leqslant}
\renewcommand{\geq}{\geqslant}
\newcommand{\dive}[1]{\mathrm{div}}
\newcommand{\eps}{\varepsilon}
\theoremstyle{plain} 
\newtheorem{pr}{Proposition}[section] 
\newtheorem{tm}[pr]{Theorem}
\newtheorem{lm}[pr]{Lemma}
\newtheorem{cor}[pr]{Corollary}
\theoremstyle{definition}
\newtheorem{rmk}[pr]{Remark}
\newtheorem{conj}[pr]{Conjecture}
\numberwithin{equation}{section}
\let\original@addcontentsline\addcontentsline
\newcommand{\dummy@addcontentsline}[3]{}
\newcommand{\DeactivateToc}{\let\addcontentsline\dummy@addcontentsline}
\newcommand{\ActivateToc}{\let\addcontentsline\original@addcontentsline}
\begin{document}

\author{Kévin Le Balc'h}

\title{Almost sharp local Bernstein estimates for Laplace eigenfunctions on compact Riemannian manifolds}

\maketitle

\begin{abstract}
We study local growth properties of Laplace eigenfunctions on compact Riemannian manifolds. Following the paradigm introduced by Donnelly and Fefferman in the late 1980s, an eigenfunction is expected to behave locally like a polynomial of degree comparable to the square root of the eigenvalue. In this direction we establish almost sharp local $L^{p}$--Bernstein inequalities, $p\in[1,\infty]$, conjectured by Donnelly--Fefferman in 1990. We also derive analogous estimates for $A$-harmonic functions, with the square root of the eigenvalue replaced by the doubling index.

Our argument refines the original Donnelly--Fefferman method based on $L^{2}$--Carleman estimates. At the $L^{2}$--level, we first prove a uniform bound for the doubling index on annuli of width comparable to the wavelength. This implies, with an arbitrarily small polynomial loss, the corresponding property at the $L^{p}$--level for all $p\in[1,\infty]$. The latter step relies on a bootstrap scheme combining elliptic regularity with a patching of local Carleman estimates on small balls.
\end{abstract}

\small

\tableofcontents

\normalsize

\section{Introduction}

Let $M$ be a $C^{\infty}$-smooth, compact, connected, Riemannian manifold of dimension $d \geq 2$, without boundary, equipped with a Riemannian metric $g$. In this article we are interested in growth properties of Laplace eigenfunctions $\varphi_{\lambda} \in C^{\infty}(M)$ associated to the eigenvalue $\lambda \geq 0$, 
\begin{equation}
\label{eq:Laplaceeingenfunction}
- \Delta_g \varphi_{\lambda} = \lambda \varphi_{\lambda}\ \text{in}\ M,
\end{equation}
where $\Delta_g= \mathrm{div}_g \circ \nabla_g$ is the Laplace-Beltrami operator. 
One may distinguish between global growth and local growth. 

\medskip

\noindent \textbf{Global Bernstein estimate.} The famous classical Bernstein's estimate on trigonometric polynomials is typically a global growth estimate on linear combination of Laplace eigenfunctions on the one-dimensional torus. Let $n \geq 0$ and $T = \sum_{k=-n}^{n} a_k e^{ikx}$ for $x \in (0,2 \pi)$, then
\begin{equation}
\label{eq:classicalBernstein}
\sup_{x \in (0, 2 \pi)} |T'(x)| \leq n \sup_{x \in (0, 2 \pi)} |T(x)|.
\end{equation}
For a survey around \eqref{eq:classicalBernstein}, one can read \cite{QZ19}. Global $L^{\infty}$-Bernstein estimates also hold for a single Laplace eigenfunction on $M$, i.e. there exist $C>0$ depending only on $M$ such that for every Laplace eigenfunction $\varphi_{\lambda}$, 
\begin{equation}
\label{eq:globalBernstein}
\sup_{M} |\nabla \varphi_{\lambda}| \leq C \sqrt{\lambda} \sup_{M} | \varphi_{\lambda}|.
\end{equation}
This estimate \eqref{eq:globalBernstein} is actually a consequence of standard elliptic estimates applied to the harmonic lifted function $u(x,t) = \varphi_{\lambda}(x) e^{\sqrt{\lambda}t}$, see \cite[Corollary 3.3]{OCP13} for a proof. Note that one can actually extend global Bernstein estimates for a single Laplace eigenfunction that is \eqref{eq:globalBernstein} to a linear combination of Laplace eigenfunctions, i.e. there exist $C>0$ depending only on $M$ such that for $\Phi_{\Lambda} = \sum_{\lambda_k \leq \Lambda} a_k \varphi_{\lambda_k}$, then 
\begin{equation}
\label{eq:globalBernsteinSum}
\sup_{M} |\nabla \Phi_{\Lambda}| \leq C \sqrt{\Lambda} \sup_{M} | \Phi_{\Lambda}|,
\end{equation}
see for instance \cite[Theorem 2.1]{FM10} and \cite[Theorem 1.2]{IO22}. In this latter case, the proofs are considerably more involved.

\subsection{On local growth and local Bernstein estimates of Laplace eigenfunctions}
 
Concerning local growth, from the breakthrough work of Donnelly, Fefferman \cite{DF88}, we know that $\varphi_{\lambda}$ also shares local growth properties with a polynomial function of degree proportional to $\sqrt{\lambda}$. One of the most celebrated result is the following bound on the doubling index of Laplace eigenfunctions, see \cite[Theorem 4.2]{DF88}.\medskip 

\noindent \textbf{Bound on the doubling index.} There exist $r_0, C >0$ depending only on $M$, such that for every Laplace eigenfunction $\varphi_{\lambda} \in C^{\infty}(M)$, i.e. satisfying \eqref{eq:Laplaceeingenfunction}, for every $x \in M$, $r \in (0,r_0)$,
\begin{equation}
\label{eq:doublingindexLaplaceeigen}
\sup_{B_g\left(x,2 r\right)} |\varphi_{\lambda}| \leq e^{C \sqrt{\lambda}} \sup_{B_g\left(x,r\right)} |\varphi_{\lambda}|.
\end{equation}
\begin{rmk}
Note that \eqref{eq:doublingindexLaplaceeigen} is in perfect agreement to the previous heuristics because $$\sup_{t \in (-2r,2r)} t^{\sqrt{\lambda}} = 2^{\sqrt{\lambda}} \sup_{t \in (-r,r)} t^{\sqrt{\lambda}}.$$
\end{rmk}

For $f \in C^{\infty}(M)$, $x \in M$, $r >0$, the number
\begin{equation*}
N_f(B_g(x,r)) := \log\left(\frac{\sup_{B_g\left(x,2 r\right)} |f|}{\sup_{B_g\left(x,r\right)} |f|}\right),
\end{equation*}
is usually called the doubling index of $f$ in the ball $B_g(x,r)$. Note that for $x \in M$,
\begin{equation*}
\lim_{r \to 0} N_f(B_g(x,r)) = \text{vanishing order of }f\ \text{at}\ x,
\end{equation*}
where the vanishing order of $f$ at $x$ is the smallest integer $k$ such that the derivatives of $f$ of order smaller than $k$ vanish while there is some non-zero derivative of order $k$. As a consequence, the doubling index estimate \eqref{eq:doublingindexLaplaceeigen} tells us that the vanishing order of $\varphi_{\lambda}$ is bounded by $C \sqrt{\lambda}$. This last result is sharp if we do not make extra assumptions on the Riemannian manifold because the vanishing order of spherical harmonics is comparable to $C \sqrt{\lambda}$.\medskip

In \cite{DF90}, the authors pursue the analogy between Laplace eigenfunctions and polynomial functions. They obtain the following local $L^2$ estimates, see \cite[Theorem 1]{DF90}.

\medskip

\noindent \textbf{Local Bernstein estimates.}  There exist $r_0, C >0$ depending only on $M$, such that for every Laplace eigenfunction $\varphi_{\lambda} \in C^{\infty}(M)$, i.e. satisfying \eqref{eq:Laplaceeingenfunction}, for every $x \in M$, $r \in (0,r_0)$,
\begin{align}
\label{eq:L2BernsteinfunctionAnnulus}
 \|\varphi_{\lambda}(y)\|_{L^2\left(r < d_g(y,x) < r\left(1+\frac{2}{\sqrt{\lambda}}\right)\right)} &\leq C  \|\varphi_{\lambda}(y)\|_{L^2\left(r < d_g(y,x)  < r\left(1+\frac{1}{\sqrt{\lambda}}\right)\right)},\\
\label{eq:L2Bernsteinfunction}
 \|\varphi_{\lambda}\|_{L^2\left(B_g\left(x,r\left(1+\frac{1}{\sqrt{\lambda}}\right)\right)\right)} &\leq C  \|\varphi_{\lambda}\|_{L^2\left(B_g\left(x,r\right)\right)},\\
\label{eq:L2Bernsteingradient}
\|\nabla \varphi_{\lambda}\|_{L^2\left(B_g\left(x,r\right)\right)} &\leq C \frac{\sqrt{\lambda}}{r} \|\varphi_{\lambda}\|_{L^2\left(B_g\left(x,r\right)\right)}.
\end{align}

\begin{rmk}
The inequality \eqref{eq:L2Bernsteingradient} is referred to as a local \(L^2\)-Bernstein estimate, as it shares structural similarities with the classical global \(L^{\infty}\)-Bernstein estimate \eqref{eq:globalBernstein}. It follows directly from \eqref{eq:L2Bernsteinfunction}, which itself is a consequence of \eqref{eq:L2BernsteinfunctionAnnulus}. Furthermore, the classical bound on the doubling index of Laplace eigenfunctions, recalled in \eqref{eq:doublingindexLaplaceeigen}, can also be derived from \eqref{eq:L2Bernsteinfunction}. Hence, among these, \eqref{eq:L2BernsteinfunctionAnnulus} is the strongest estimate. Roughly, the estimate \eqref{eq:L2BernsteinfunctionAnnulus} shows that the doubling index at the \(L^2\)-level for Laplace eigenfunctions, when considered on annuli of width comparable to the wavelength \(\lambda^{-1/2}\), remains \emph{uniformly bounded}. This stands in sharp contrast with the situation on balls, where the bound \eqref{eq:doublingindexLaplaceeigen} is known to be sharp even for \(r = c\,\lambda^{-1/2}\), as can be seen from the case of spherical harmonics on the two-dimensional sphere \(\mathcal{S}^2\). As a consequence, we have that all the estimates \eqref{eq:L2BernsteinfunctionAnnulus}, \eqref{eq:L2Bernsteinfunction} and \eqref{eq:L2Bernsteingradient} are sharp. In \cite[Theorem~1]{DF90}, only the last two inequalities, \eqref{eq:L2Bernsteinfunction} and \eqref{eq:L2Bernsteingradient}, are explicitly stated. The stronger estimate \eqref{eq:L2BernsteinfunctionAnnulus} does not appear there but is implicitly contained in the proof. As will become clear later, this hidden estimate plays a crucial role in extending the results to the \(L^p\)-setting for \(p \in [1, \infty]\). For this reason, we shall recall the proof of \eqref{eq:L2BernsteinfunctionAnnulus} in detail.
\end{rmk}

\medskip

\noindent \textbf{On local Bersnstein estimates at $L^{\infty}$-level.} In \cite{DF90}, motivated by \eqref{eq:L2Bernsteinfunction}, the authors formulate the following conjecture. 
\begin{conj}[\cite{DF90}]
\label{conj:df90}
There exist $r_0, C >0$ depending only on $M$, such that for every Laplace eigenfunction $\varphi_{\lambda} \in C^{\infty}(M)$, i.e. satisfying \eqref{eq:Laplaceeingenfunction}, for every $x \in M$, $r \in (0,r_0)$,
\begin{equation}
\label{eq:LinftyBernsteingradientconj}
\sup_{B_g\left(x,r\right)} |\nabla \varphi_{\lambda}| \leq C \frac{\sqrt{\lambda}}{r}  \sup_{B_g\left(x,r\right)} |\varphi_{\lambda}|.
\end{equation}
\end{conj}
\begin{rmk}
\Cref{conj:df90} is again motivated by the heuristics that $\varphi_{\lambda}$ behaves as $t^{\sqrt{\lambda}}$ because
$$\sup_{t \in(-r,r)}\left(\frac{d}{dt} t^{\sqrt{\lambda}}\right) =  \sup_{t \in(-r,r)} \sqrt{\lambda} t^{\sqrt{\lambda}-1} = \frac{\sqrt{\lambda}}{r} \sup_{t \in(-r,r)} t^{\sqrt{\lambda}}.$$
\end{rmk}

We briefly recall the known local Bernstein estimates at the $L^{\infty}$ scale.
\begin{itemize}
\item In \cite{DF90}, starting from the $L^2$ Bernstein estimate \eqref{eq:L2Bernsteinfunction}, the authors established the weak local $L^{\infty}$ gradient bound
\begin{equation}
\label{eq:weakLinftyBernsteingradient}
\sup_{B_g\left(x,r\right)} |\nabla \varphi_{\lambda}| \leq C \frac{\lambda^{\frac{d+2}{4}}}{r} \sup_{B_g\left(x,r\right)} |\varphi_{\lambda}|.
\end{equation}
\item In \cite{Don95}, Dong refines \eqref{eq:weakLinftyBernsteingradient} in dimension $d=2$, using geometric ideas originating from \cite{Don92}. He proved
\begin{equation}
\label{eq:weakLinftyBernsteingradientDong}
\sup_{B_g\left(x,r\right)} |\nabla \varphi_{\lambda}| \leq C \max\left(\frac{\sqrt{\lambda}}{r}, \lambda^{\frac 34}\right) \sup_{B_g\left(x,r\right)} |\varphi_{\lambda}|\qquad (d=2).
\end{equation}
\item In the very recent article \cite{DM23}, Decio and Malinnikova obtained further improvements. They proved in dimension $d=2$
\begin{equation}
\label{eq:weakLinftyBernsteingradientDMsurfaces}
\sup_{B_g\left(x,r\right)} |\nabla \varphi_{\lambda}| \leq C \max\left(\frac{\sqrt{\lambda}}{r}, \sqrt{\lambda} \log(\lambda)\right) \sup_{B_g\left(x,r\right)} |\varphi_{\lambda}|\qquad (d=2),
\end{equation} 
and in all dimensions $d \ge 2$,
\begin{equation}
\label{eq:weakLinftyBernsteingradientDM}
\sup_{B_g\left(x,r\right)} |\nabla \varphi_{\lambda}| \leq C \max\left(\frac{\sqrt{\lambda}}{r} \log^{2}(\lambda), \lambda \log^{2}(\lambda)\right) \sup_{B_g\left(x,r\right)} |\varphi_{\lambda}|\qquad (d \geq 2).
\end{equation} 
\end{itemize}

\begin{rmk}
The estimate \eqref{eq:weakLinftyBernsteingradientDMsurfaces} is a strong refinement of \eqref{eq:weakLinftyBernsteingradientDong} and gives \Cref{conj:df90} for surfaces up to a logarithm loss and \eqref{eq:weakLinftyBernsteingradientDM} gives \Cref{conj:df90} up to a logarithm loss at the wavelength scale, i.e. $r \in \left(0, r_0 \sqrt{\lambda}^{-1}\right)$ while \eqref{eq:weakLinftyBernsteingradientDM} resembles more to the Markov's inequality for polynomials at larger scales in any dimension. Recall that for an algebraic polynomial of degree $n$, the Markov inequality holds
\begin{equation}
\label{eq:markov}
\sup_{x \in (-1,+1)} |P_n'(x)| \leq n^2 \sup_{x \in (-1,+1)} |P_n'(x)|.
\end{equation}
Note that \eqref{eq:markov} is sharp because Chebychev polynomials are extremizers of this inequality.
\end{rmk}

\subsection{Main results}

\noindent \textbf{Almost sharp local $L^p$-estimates for Laplace eigenfunctions.} The first main result of this paper is the establishment of Donnelly, Fefferman's conjecture on $L^{p}$-Bernstein estimates for Laplace eigenfunctions, $p \in [1, \infty]$, up to some arbitrary small-loss.
\begin{tm}
\label{tm:mainresult1}
For every $p \in [1,\infty]$, $\varepsilon>0$, there exist $r_0, C >0$ depending only on $M$, $p$ and $\varepsilon$, such that for every Laplace eigenfunction $\varphi_{\lambda} \in C^{\infty}(M)$, i.e. satisfying \eqref{eq:Laplaceeingenfunction}, for every $x \in M$, $r \in (0,r_0)$,
\begin{align}
\label{eq:LpBernsteinfunctionAnnulus}
 \|\varphi_{\lambda}(y)\|_{L^p\left(r < d_g(y,x)  < r\left(1+\frac{2}{\sqrt{\lambda}}\right)\right)} &\leq C \lambda^{\varepsilon} \|\varphi_{\lambda}(y)\|_{L^p\left(r < d_g(y,x)  < r\left(1+\frac{1}{\sqrt{\lambda}}\right)\right)},\\
\label{eq:LpBernsteinfunction}
 \|\varphi_{\lambda}\|_{L^p\left(B_g\left(x,r\left(1+\frac{1}{\sqrt{\lambda}}\right)\right)\right)} &\leq C \lambda^{\varepsilon}  \|\varphi_{\lambda}\|_{L^p\left(B_g\left(x,r\right)\right)},\\
\label{eq:LpBernsteingradient}
\|\nabla \varphi_{\lambda}\|_{L^p\left(B_g\left(x,r\right)\right)} &\leq C \frac{\lambda^{1/2+\varepsilon}}{r} \|\varphi_{\lambda}\|_{L^p\left(B_g\left(x,r\right)\right)}.
\end{align}
\end{tm}
\begin{rmk}
For $p=\infty$, the estimate \eqref{eq:LpBernsteingradient} reduces to the local $L^{\infty}$ Bernstein estimate \eqref{eq:LinftyBernsteingradientconj} from \Cref{conj:df90}, up to a loss of order $\lambda^{\varepsilon}$ on the right-hand side.
\end{rmk}

\noindent \textit{\textbf{On the sharp version of \Cref{tm:mainresult1}.}
During the preparation of this work, Eugenia Malinnikova kindly informed me, through a personal communication \cite{DMN-Personal}, of an ongoing work of Decio, Malinnikova, and Nazarov establishing, via a different method, the sharp local Bernstein estimates \eqref{eq:LpBernsteinfunction} and \eqref{eq:LpBernsteingradient}, that is, without the $\lambda^{\varepsilon}$-loss on the right-hand side. In particular, their results confirm \Cref{conj:df90}.
}

\medskip

\noindent \textbf{Almost sharp local $L^p$-estimates for harmonic functions.} We actually prove a similar result for $A$-harmonic functions in the Euclidean space where the role of the square root of the eigenvalue is played by the doubling index, that serves as a local degree of the solution. More precisely, we look at $L^{\infty}$-Bernstein estimates for $A$-harmonic functions with a bounded doubling index. The matrix $A=(a^{ij}(x))_{1 \leq i, j \leq d}$ is supposed to be symmetric, uniformly elliptic, with Lipschitz entries
\begin{equation}
\label{eq:LipschitzAB2}
    \Lambda_{1}^{-1} |\xi|^2 \leq \langle A(x) \xi, \xi \rangle \leq  \Lambda_{1} |\xi|^2,\quad |a^{ij}(x) - a^{ij}(y)| \leq \Lambda_2 |x-y|,\qquad x, y \in B_2,\ \xi \in \R^d,
\end{equation}
for some $\Lambda_1, \Lambda_2 >0$. We focus on functions $u$ in $B_2$, verifying
\begin{equation}
\label{eq:Aharmonic}
- \mathrm{div}(A(x) \nabla u) = 0\ \text{in}\ B_2,
\end{equation} 
with bounded doubling index
\begin{equation}
\label{eq:bounddoublingindex}
N_u(B(0,1)) := \log\left(\frac{\sup_{B\left(0,2\right)} |u|}{\sup_{B\left(0,1\right)} |u|}\right) \leq N,\qquad N \geq 2.
\end{equation}

In the spirit of local $L^2$-Bernstein estimates, we have the following result at $L^2$-level.
\begin{tm}
\label{tm:df90harmonic}
There exist $r_0, C >0$ depending only on $A$ such that for every function $u \in H_{\text{loc}}^1(B_2) \cap L^{\infty}(B_2)$ satisfying \eqref{eq:Aharmonic} with a bounded doubling index $N$ defined in \eqref{eq:bounddoublingindex}, for every $r \in (0,r_0)$,
\begin{align}
\label{eq:L2BernsteinfunctionAnnulusharmonic}
 \|u(x)\|_{L^2\left(r < |x| < r\left(1+\frac{2}{N}\right)\right)} &\leq C  \|u(x)\|_{L^2\left(r < |x| < r\left(1+\frac{1}{N}\right)\right)},\\
\label{eq:L2Bernsteinfunctionharmonic}
 \|u\|_{L^2\left(B\left(0,r\left(1+\frac{1}{N}\right)\right)\right)} &\leq C  \|u\|_{L^2\left(B\left(0,r\right)\right)},\\
\label{eq:L2Bernsteingradientharmonic}
\|\nabla u\|_{L^2\left(B\left(0,r\right)\right)} &\leq C \frac{N}{r} \|u\|_{L^2\left(B\left(0,r\right)\right)}.
\end{align}
\end{tm}

%The lower order terms are given by $W  = W(x) \in L^{\infty}(B_2;\R^d)$ and $V \in L^{\infty}(B_2;\R^d)$ satisfying
%\begin{equation}
%\label{eq:perturbationterm}
%|W(x)| + |V(x)| \leq \Lambda_3,\qquad x \in B_2,
%\end{equation}
%for some $\Lambda_3 >0$.

Based on \Cref{tm:df90harmonic}, the second main result of this paper is the following one.
\begin{tm}
\label{tm:mainresult2}
For every $p \in [1,\infty]$, $\varepsilon>0$, there exist $r_0, C >0$ depending only on $A$, $p$ and $\varepsilon$ such that for every function $u \in H_{\text{loc}}^1(B_2) \cap L^{\infty}(B_2)$ satisfying \eqref{eq:Aharmonic} with a bounded doubling index $N$ defined in \eqref{eq:bounddoublingindex}, for every $r \in (0,r_0)$,
\begin{align}
\label{eq:LpBernsteinfunctionAnnulusharmonic}
 \|u(x)\|_{L^p\left(r < |x| < r\left(1+\frac{2}{N}\right)\right)} &\leq C N^{\varepsilon} \|u(x)\|_{L^p\left(r < |x| < r\left(1+\frac{1}{N}\right)\right)},\\
\label{eq:LpBernsteinfunctionharmonic}
 \|u\|_{L^p\left(B\left(0,r\left(1+\frac{1}{N}\right)\right)\right)} &\leq C N^{\varepsilon}  \|u\|_{L^p\left(B\left(0,r\right)\right)},\\
\label{eq:LpBernsteingradientharmonic}
\|\nabla u\|_{L^p\left(B\left(0,r\right)\right)} &\leq C \frac{N^{1+\varepsilon}}{r} \|u\|_{L^p\left(B\left(0,r\right)\right)}.
\end{align}
\end{tm}
\begin{rmk}
\Cref{tm:mainresult2} has to be compared to \cite[Theorem 2]{DM23} where the authors obtain a similar result with stronger regularity assumptions on the matrix $A$ and stronger smallness assumptions on the radius $r$, that has to be small in function of the doubling index. 
\end{rmk}

\begin{rmk}
On the one hand, \Cref{tm:mainresult1} and \Cref{tm:mainresult2} are related by the standard lifting trick that allows to pass from Laplace eigenfunctions to harmonic functions. If $\varphi_{\lambda}$ satisfies \eqref{eq:Laplaceeingenfunction} then the function
\begin{equation}
\label{eq:liftingtrick}
u(x,t) = \varphi_{\lambda}( x) e^{\sqrt{\lambda} t}\qquad (x,t) \in M \times \R,
\end{equation}
is harmonic on the product manifold $M \times \R$ and by using \eqref{eq:doublingindexLaplaceeigen} its doubling index is bounded by $C \sqrt{\lambda}$. This standard trick was first observed by \cite{Lin91} in the study of the nodal volume for Laplace eigenfunctions on compact Riemannian manifolds, it has other applications like for instance the obtaining of the bound on the doubling index of Laplace eigenfunctions in \eqref{eq:doublingindexLaplaceeigen}, see \cite[Proposition 2.4.1]{LM20}. On the other hand, it is worth mentioning that \Cref{tm:mainresult1} is not a direct consequence of \Cref{tm:mainresult2}, as it is the case in \cite{DM23} where the authors deduce $L^{\infty}$-Bernstein estimates for Laplace eigenfunctions from $L^{\infty}$-Bernstein estimates for $A$-harmonic functions because they are working at the wavelength scale, i.e. $r \leq C \sqrt{\lambda}^{-1}$. In our case, the same phenomenon appears,  we can only deduce \Cref{tm:mainresult1} from \Cref{tm:mainresult2} for $r \leq C \sqrt{\lambda}^{-1}$. This is why we will actually prove \Cref{tm:mainresult1} in an independent way by following the same strategy of the proof of \Cref{tm:mainresult2} even if the some new technical difficulties appear. \end{rmk}

\subsection{Strategy of the proof}

The proofs of \Cref{tm:mainresult1} and \Cref{tm:mainresult2} follow similar lines. We first present the argument for \Cref{tm:mainresult2}, and then describe the minor adaptations required for \Cref{tm:mainresult1}. The proof of \eqref{eq:LpBernsteinfunctionAnnulusharmonic} is divided into the following steps.

\medskip
 \textbf{Step 1: Uniform $L^2$-doubling bound on small annuli.}
We begin with the $L^2$ estimate \eqref{eq:L2BernsteinfunctionAnnulusharmonic}. The argument originates in the proof of the $L^2$ Bernstein estimates for Laplace eigenfunctions in \cite{DF90}, which rely on an appropriate $L^2$ Carleman estimate; see in particular \Cref{lm:Carlemanpunctured}.

\medskip
 \textbf{Step 2: Polynomial $L^p$-doubling bound on small annuli.}
Working on annuli of width comparable to $N^{-1}$, the previous step implies that the corresponding $L^p$-doubling index satisfies a polynomial bound. This follows from standard elliptic regularity arguments; see \Cref{pr:doublingpolynomial}. For later purposes, we refine this by proving the same bound on annuli of width $\varepsilon \sim N^{-1}\log N$; see \Cref{pr:doublingpolynomialPratical}.

\medskip
 \textbf{Step 3: $L^2$ Carleman estimates on small balls covering the annulus.}
We cover the annulus by balls of radius $\varepsilon \sim N^{-1}\log N$, apply suitable Carleman estimates in each ball, and sum the contributions. Three types of cut-off terms arise: those supported strictly inside the annulus, those near the outer boundary, and those near the inner boundary. The interior terms are absorbed by taking the Carleman parameter $\alpha$ sufficiently large relative to $\varepsilon^{-1}$, i.e. $\alpha \ge C\varepsilon^{-1}$. The terms near the outer boundary are controlled using the polynomial $L^p$-doubling bound from Step~2, requiring $\alpha \ge C\varepsilon^{-1}\log N \sim N$. Finally, the terms near the inner boundary give rise to an observation term. Comparing both sides, using elliptic regularity and Hölder inequalities on balls of radius~$\varepsilon$, we obtain that the $L^p$-doubling index on annuli of width $\sim N^{-1}$ grows at most like $(\alpha \varepsilon)^K$ for some $K=K(d)>0$, hence at most like $(\log N)^K$. In other words, the polynomial growth from Step~2 is improved to logarithmic growth, yielding \eqref{eq:LpBernsteinfunctionAnnulusharmonic}.

\medskip
\textbf{Extra step: the $L^\infty$ case.}
The case $p=+\infty$ follows by letting $p\to\infty$ and observing that the constants in the previous steps do not depend on $p$.

\begin{rmk}
A more straightforward approach—applying a single Carleman estimate directly on an annulus of width~$\varepsilon$—fails because the volume of such an annulus is of order~$\varepsilon$, and elliptic regularity combined with Hölder inequalities would incur a polynomial loss. By instead working on balls of radius~$\varepsilon$, whose volume is of order~$\varepsilon^d$, these estimates produce only logarithmic losses. As a consequence, one may replace the arbitrary small polynomial loss in the main theorems by a dimensional power of a logarithmic loss, which is strictly smaller. For clarity, we keep the polynomial version in the statements. The strategy may also be iterated to obtain nearly sharp local Bernstein estimates with arbitrarily small loss.
\end{rmk}

\medskip
\noindent \textbf{Extension to Laplace eigenfunctions.}
The proof of \eqref{eq:LpBernsteinfunctionAnnulus} in \Cref{tm:mainresult1} is analogous. The main new difficulty is that the operator becomes $-\Delta_g - \lambda$. We therefore use $L^2$ Carleman estimates adapted to $-\Delta_g - \lambda$, with Carleman parameter satisfying $\alpha \ge C\sqrt{\lambda}$. We also track how elliptic regularity in geodesic balls of radius $\varepsilon \sim \lambda^{-1/2}\log \lambda$ changes for the equation $-\Delta_g \varphi_\lambda - \lambda \varphi_\lambda =0$; this introduces only logarithmic losses in $\lambda$, which are smaller than any power of $\lambda$. Consequently, this do not affect the overall strategy.

\bigskip

\noindent \textbf{Organization of the paper.} In \Cref{sec:proofgrowthestimates}, we present the proofs of our main results, \Cref{tm:mainresult1} and \Cref{tm:mainresult2}. In \Cref{sec:extensions}, we discuss some generalizations/extensions/open problems related to our main results, \Cref{tm:mainresult1} and \Cref{tm:mainresult2}.

\bigskip

\noindent \textbf{Acknowledgements.} I warmly thank Eugenia Malinnikova for fruitful discussions during the preparation of this work.

\section{Proof of the growth estimates}
\label{sec:proofgrowthestimates}

The goal of this part is to prove \Cref{tm:mainresult1} and \Cref{tm:mainresult2}. \medskip

The first four parts are dedicated to the proof of \Cref{tm:mainresult2} while the last part is devoted to the proof of \Cref{tm:mainresult1}. Recall that the proof of the growth estimates for Laplace eigenfunctions on Riemannian manifolds stated in  \Cref{tm:mainresult1} is a small adaptation of the one of the growth estimates for $A$-harmonic functions on the Euclidean space stated in \Cref{tm:mainresult2}, this is why we will only insist on the new difficulties that appear in the sixth part. The first part consists in stating $L^2$-Carleman estimates for the operator $\mathrm{div}(A \nabla \cdot)$, the second part proves vanishing order estimates for $A$-harmonic functions with bounded doubling index, the third part establishes the uniform bound of the doubling index at $L^2$-level on annulus of width comparable to $N^{-1}$, then the polynomial bound at $L^p$-level as a consequence, the fourth part is dedicated to the proof of the almost sharp local $L^p$-estimates on annulus of width comparable to $N^{-1}$, the fifth part consists in proving the expected growth estimates i.e. local Bernstein estimates in $L^p$.\medskip

%For proving \Cref{tm:mainresult2}, we will assume that $A(0) = I_d$, this will be particularly used in the proof of the second Carleman estimate, see \Cref{lm:Carlemanpunctured} below. Note that this is not a restriction because by local change of variables we can drop the assumption that $A(0) = I_d$, the balls are then replaced by ellipses. Applying the inequality several times and inscribing ellipses in balls we obtain \Cref{tm:mainresult2} in full generality, we omit here some technical details for an accurate argument.\\

In the next five parts, the positive constants $C>0$, $c>0$ depend on $A$ and $d$ while in the last part, the positive constants $C>0$, $c>0$ are allowed to depend on $M$, $g$ and $d$. To insist on the dependence of a positive constant $C$ in function of some parameter $s$, we will sometimes use the notation $C=C(s)$. Moreover, the constants can vary from one line to another without explicitly mentioning it.

\subsection{$L^2$-Carleman estimates}

The goal of this part is to state $L^2$-Carleman estimates.

\medskip

To simplify the notations in the next, we set 
\begin{equation*}
\mathrm{div}(A(x) \nabla f) = \Delta_A f.
\end{equation*}
The radial part of $x \in \R^d$ will be denoted by $|x|=r$.

\medskip

First, we have the following standard $L^2$-Carleman estimate, that comes from a direct application of \cite[Theorem 2]{EV03}, stated in the parabolic case.
\begin{lm}
\label{lm:Carleman}
There exists a positive constant $C=C(A)>0$, a radial increasing function $\rho=\rho(r)$ for $0 < r < 2$ satisfying
\begin{equation}
\label{eq:assumptionrho}
C^{-1} \leq \frac{\rho(r)}{r} \leq C,\ C^{-1}  \leq |\partial_r \rho(r) | \leq C\ \qquad \forall r \in (0,2),
\end{equation} 
such that for every $\alpha \geq C$, $f \in C_c^{\infty}(B_{2} \setminus \{0\})$, the following estimate holds
\begin{equation}
\label{eq:Carleman1}
\alpha^3 \int_{B_{2}} \rho^{-1-2 \alpha} |f|^2 dx + \alpha  \int_{B_{2}} \rho^{1- 2 \alpha} |\nabla f|^2 dx  
\leq C  \int_{B_{2}}  \rho^{2-2 \alpha} |\Delta_A f|^2 dx.
\end{equation}
\end{lm}

In the next, the function $\rho$ introduced in \Cref{lm:Carleman} is fixed.

\medskip

The next result tells us how the Carleman estimate \eqref{eq:Carleman1} from \Cref{lm:Carleman} translates when the function vanishes in a small ball centered at $0$.
\begin{lm}
\label{lm:Carlemanpunctured}
There exists a positive constant $C=C(A)>0$ and $c=c(A)>0$ such that for every $r \in (0,c)$, $\varepsilon \in (0,1)$, $\alpha \geq C \varepsilon^{-1}$, for all $f \in C_c^{\infty}(B_{2} \setminus B_r)$, the following estimate holds
\begin{equation}
\label{eq:secondCarleman}
\frac{\alpha^2}{\varepsilon^2 r^2} \int_{B(0,r(1+2\varepsilon))} \rho^{-2 \alpha} |f|^2 dx + \frac{1}{\varepsilon^2} \int_{B(0,r(1+2\varepsilon))} \rho^{-2 \alpha} |\nabla f|^2 dx 
\leq C  \int_{B_{2}} \rho^{2-2 \alpha} |\Delta_A f|^2 dx.
\end{equation}
\end{lm}
\Cref{lm:Carlemanpunctured} is inspired by \cite[Lemma A]{DF90}. Nevertheless, we make one crucial improvement. While \cite[Lemma A]{DF90} only considers the case $\varepsilon = \alpha^{-1}$, our result from \Cref{lm:Carlemanpunctured}, allows us to choose parameters $\varepsilon \in (0,1)$, $\alpha \geq C \varepsilon^{-1}$. 

\medskip The proof of \Cref{lm:Carlemanpunctured} is rather the same as the one of \cite[Lemma A]{DF90}, treating the equation $-\Delta_g u - \lambda u=g$ in the Riemannian case. First of all, by employing the classical strategy of Aronszajn, Krywicki and Szarski in \cite{AKS62}, see also \cite[Section 5]{Rul18} for a nice exposition, one can work wih geodesic polar coordinates as in the proof of \cite[Lemma A]{DF90}. This can be done even for Lipschitz metrics. Let us now present the key arguments to obtain \eqref{eq:secondCarleman} following line by line the proof of \cite[Lemma A]{DF90}.

\medskip

\noindent \textbf{Modifications in the proof of \cite[Lemma A]{DF90} to obtain \Cref{lm:Carlemanpunctured}.} All the computations are the same until the equation $(2\mathrm{b})$. Note that their parameter $\beta$ has the role of our parameter $\alpha$. The equation $(3)$ that now needs to be solved is the following one
$$ y'+ \mu y - y^2 = \varepsilon^{-2} \phi(\varepsilon^{-1}x).$$ 
One can then solve it for $0 \leq \varepsilon^{-1} x \leq C$, where $C>0$ is a positive constant. The remaining arguments are the same leading to the estimate of the first left hand side term of \eqref{eq:secondCarleman} by the right hand side term of \eqref{eq:secondCarleman}. To obtain the gradient term, we use a Cacciopoli's estimate for the equation $-\Delta_{A} f =g$ by multiplying it by $\varepsilon^{-2} \chi^2 \rho^{-2 \alpha} f$, where $\chi$ is a suitable cut-off function equal to $1$ in a $(r\varepsilon)$-neighborhood of $|x| = r$.

\medskip

In \Cref{sec:proofCarleman}, for the sake of clarity, we give a full self-contained proof of \Cref{lm:Carleman} for general Lipschitz metric $A$ and a simple proof of \Cref{lm:Carlemanpunctured} for the flat metric $A=\Delta$ that differs from \cite[Lemma A]{DF90}. As before, the general case can be obtained by constructing geodesic polar coordinates adapted to the metric $A$. The details are omitting to keep the things simple.

%For the sake of completeness, we present self-contained proofs of \Cref{lm:Carleman}, \Cref{lm:Carlemanpunctured} for the flat metric $\Delta_A = \Delta$. The general case can be obtained by working with geodesic polar coordinates, following , .

\subsection{Vanishing order estimate}

Before proving \Cref{tm:mainresult2}, one needs to prove a result on the vanishing order estimate for $A$-harmonic functions with bounded doubling index.

\medskip

First, we have the following modification of the Carleman estimate \eqref{eq:Carleman1} from \Cref{lm:Carleman}.

\begin{lm}
\label{lm:CarlemanModif}
There exists a positive constant $C=C(A)>0$ such that for every $x_0 \in B_{1/4}$, for every $\alpha \geq C$, $f \in C_c^{\infty}(B_{2} \setminus \{x_0\})$, the following estimate holds
\begin{equation}
\label{eq:CarlemanModif}
\alpha^3 \int_{B_{2}} |x-x_0|^{-1-2 \alpha} |f|^2 dx + \alpha  \int_{B_{2}} |x-x_0|^{1- 2 \alpha} |\nabla f|^2 dx  
\leq C  \int_{B_{2}}  |x-x_0|^{2-2 \alpha} |\Delta_A f|^2 dx.
\end{equation}
\end{lm}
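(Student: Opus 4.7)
The natural strategy is to reduce the statement to \Cref{lm:Carleman} by first translating $x_0$ to the origin and then rescaling so that the new support still fits inside $B_2$. I would set $\tilde f(y) := f(y+x_0)$ and $\tilde A(y) := A(y+x_0)$, so that $\tilde f \in C_c^\infty((B_2 - x_0)\setminus\{0\})$ and $\tilde A$ inherits the same ellipticity constants and Lipschitz constant as $A$. Because $x_0 \in B_{1/4}$, the support of $\tilde f$ sits inside $B_{9/4}$ rather than $B_2$, so \Cref{lm:Carleman} is not directly applicable.

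To accommodate this I would perform a dilation with a universal factor $\lambda > 9/8$ (say $\lambda = 5/4$): set $\hat f(z) := \tilde f(\lambda z) = f(\lambda z + x_0)$ and $\hat A(z) := A(\lambda z + x_0)$. Then $\hat f \in C_c^\infty(B_2 \setminus \{0\})$ since $\mathrm{supp}\,\hat f \subset \lambda^{-1}(B_2 - x_0) \subset B_2$, while $\hat A$ is uniformly elliptic with the same constants $\Lambda_1$ and has Lipschitz constant at most $\lambda\Lambda_2$. The matrix $\hat A$ is a priori only defined on $\lambda^{-1}(B_2-x_0)$, but it can be extended to all of $B_2$ in any manner preserving ellipticity and the Lipschitz bound up to a dimensional factor; the extension does not affect the estimate since $\hat f$ vanishes outside $\lambda^{-1}(B_2-x_0)$. \Cref{lm:Carleman} then applies to $\hat f$ and $\hat A$, yielding \eqref{eq:Carleman1} for $\hat f$ with constant depending only on $A$.

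Reverting to $f$ via $x = \lambda z + x_0$ produces the scaling relations $|z| = \lambda^{-1}|x-x_0|$, $|\nabla \hat f(z)|^2 = \lambda^2 |\nabla f(x)|^2$, $|\Delta_{\hat A}\hat f(z)|^2 = \lambda^4 |\Delta_A f(x)|^2$, and $dz = \lambda^{-d}\,dx$. A short bookkeeping calculation shows that each of the two terms on the left-hand side of the Carleman inequality for $\hat f$ carries the common factor $\lambda^{1+2\alpha-d}$, while the right-hand side carries $\lambda^{2+2\alpha-d}$; dividing through by $\lambda^{1+2\alpha-d}$ leaves a single extra $\lambda$ on the right, which is absorbed into $C(A)$. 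This produces exactly \eqref{eq:CarlemanModif}. The only point requiring verification is that the new ellipticity and Lipschitz constants of $\hat A$ stay bounded uniformly in $x_0 \in B_{1/4}$, which is transparent from the construction, so I do not anticipate a real obstacle beyond careful tracking of the $\lambda$-powers.
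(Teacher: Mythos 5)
Your proposal is correct, and it follows a genuinely different route from the paper's. The paper's proof (carried out in the appendix as \Cref{lm:Carleman_appModif}) re-runs the entire Carleman computation of \Cref{lm:Carleman_app} after replacing the quadratic form defining the weight: instead of $\sigma(x) = \bigl(\sum a_{ij}(0)x_i x_j\bigr)^{1/2}$ one takes $\sigma(x) = \bigl(\sum a_{ij}(x_0)(x_i-x_{0,i})(x_j-x_{0,j})\bigr)^{1/2}$, freezing the coefficients at $x_0$, and then repeats the whole commutator analysis. Your argument instead treats \Cref{lm:Carleman} as a black box and reduces to it by an affine change of variables $x \mapsto \lambda^{-1}(x-x_0)$ with a universal $\lambda \in (9/8, 2)$ (your choice $\lambda = 5/4$ works), extending $\hat A$ to $B_2$ in a manner that preserves the structural constants. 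The scaling bookkeeping you carry out is exact: the two left-hand side terms pick up $\lambda^{1+2\alpha-d}$ and the right-hand side picks up $\lambda^{2+2\alpha-d}$, leaving a harmless factor $\lambda$ absorbed into $C$. Your approach is shorter and more modular; the paper's approach, by contrast, remains internal to the Carleman machinery and so is the natural thing to do if one wants to track how the antisymmetric term $\mathcal A(g)$ behaves under the re-centering (which the paper in fact exploits later, in the proof of \Cref{lm:Carlemanpunctured}). Both rely on the constant $C(A)$ in \Cref{lm:Carleman} depending only on $d$, $\Lambda_1$, $\Lambda_2$, and you correctly observe that the translated-and-dilated matrix $\hat A$ has ellipticity constant $\Lambda_1$ and Lipschitz constant at most $\lambda\Lambda_2$, uniformly in $x_0 \in B_{1/4}$, so the threshold $\alpha \geq C$ and the constant $C$ can be taken uniformly. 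One cosmetic remark: no dimensional factor is in fact needed in the Lipschitz extension of $\hat A$ (a McShane--Whitney extension of each entry, followed by projection onto the set of symmetric matrices with spectrum in $[\Lambda_1^{-1},\Lambda_1]$, does the job exactly), but the extension is anyway immaterial since $\hat f$ is compactly supported inside $\lambda^{-1}(B_2 - x_0)$.
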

The proof of \Cref{lm:CarlemanModif} is a small adaptation of \Cref{lm:Carleman}.

\medskip

As a consequence of \Cref{lm:CarlemanModif}, we deduce the following result.
\begin{lm}
\label{lm:vanishingorderestimate}
There exists $C=C(A)>0$ such that for every $u \in H_{\text{loc}}^1(B_2) \cap L^{\infty}(B_2)$ satisfying \eqref{eq:Aharmonic} and \eqref{eq:bounddoublingindex}, the following estimate holds
\begin{equation}
\label{eq:vnaishingBall}
\sup_{B(x,1/4)} |u| \geq \exp(-CN) \sup_{B_2} |u| \qquad \forall x \in B_{1/2}.
\end{equation}
\end{lm}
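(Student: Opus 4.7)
The plan is to combine the doubling hypothesis with a propagation-of-smallness argument built on \Cref{lm:CarlemanModif}. Normalize $M := \sup_{B_2}|u| = 1$, so that the assumption reads $\sup_{B_1}|u| \geq e^{-N}$. By continuity of $u$ there exists $y^{\ast} \in \overline{B_{1}}$ at which $|u(y^{\ast})| \geq e^{-N}$, and a small perturbation places $y^{\ast}$ at a positive definite distance from $\partial B_2$ (say in $B_{9/8}$) with $|u(y^{\ast})| \geq \tfrac{1}{2} e^{-N}$. The task then reduces to propagating this pointwise lower bound to every $x \in B_{1/2}$, that is, to showing $\sup_{B(x,1/4)}|u| \geq c\,e^{-CN}$.

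The key technical tool is a quantitative three-ball (Hadamard-type) inequality: there exist $C = C(A) > 0$ and $\theta = \theta(A) \in (0,1)$ such that, for every $y_0$ lying well inside $B_2$ and every $0 < r_1 < r_2 < r_3$ with $B(y_0, r_3) \subset B_2$,
\begin{equation*}
\sup_{B(y_0, r_2)}|u| \leq C\,\sup_{B(y_0, r_1)}|u|^{\theta}\,\sup_{B(y_0, r_3)}|u|^{1-\theta}.
\end{equation*}
I would derive this from \Cref{lm:CarlemanModif} (after a translation and rescaling moving $y_0$ to the origin, which preserves the ellipticity constant and Lipschitz seminorm of $A$ up to harmless factors), applied to $f = \eta u$ where $\eta$ is a smooth radial cutoff centred at $y_0$ equal to $1$ on $B(y_0, 2r_2)\setminus B(y_0, 2r_1)$ and vanishing outside $B(y_0, r_3)$ and inside $B(y_0, r_1)$. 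Since $\Delta_A u = 0$, the commutator $\Delta_A(\eta u) = 2\,A\nabla\eta\cdot\nabla u + (\Delta_A\eta)u$ is supported in two transition annuli near radii $r_1$ and $r_3$. On each annulus, Caccioppoli's inequality converts $\int|\nabla u|^2$ into $\int|u|^2$ on a slightly enlarged region, so the right-hand side of the Carleman estimate takes the schematic form $C\,r_1^{d-2-2\alpha}\sup_{B(y_0, 4r_1)}|u|^2 + C\,r_3^{d-2-2\alpha}\sup_{B(y_0, r_3)}|u|^2$. Restricting the left-hand side to the middle annulus, converting the resulting $L^2$ estimate to $L^\infty$ by Moser iteration, and optimising in the large parameter $\alpha$ produces the three-ball inequality.

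With the three-ball inequality in hand, we connect $x \in B_{1/2}$ to $y^{\ast}$ by a chain of points $z_0 = y^{\ast}, z_1, \ldots, z_K = x$ lying in $B_{7/4}$ with $|z_{i+1}-z_i| \leq \rho$ and $K \leq K_0(d)$, for some fixed radius $\rho < 1/4$ and a dimensional constant $K_0$. Applying the three-ball inequality at each $z_{i+1}$ with radii $\rho, 2\rho, 4\rho$ and using $\sup_{B_2}|u| \leq 1$ to eliminate the outer factor yields
\begin{equation*}
\sup_{B(z_i,\rho)}|u| \leq \sup_{B(z_{i+1},2\rho)}|u| \leq C\,\sup_{B(z_{i+1},\rho)}|u|^{\theta},
\end{equation*}
which rearranges to $\sup_{B(z_{i+1},\rho)}|u| \geq c\,\sup_{B(z_i,\rho)}|u|^{1/\theta}$. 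Iterating $K$ times from $\sup_{B(y^{\ast},\rho)}|u| \geq \tfrac{1}{2} e^{-N}$ gives $\sup_{B(x,\rho)}|u| \geq c\,e^{-N(1/\theta)^K}$; since $K$ is bounded by a dimensional constant while $\theta \in (0,1)$ is uniform, this is $\geq c\,e^{-CN}$, which implies \eqref{eq:vnaishingBall} because $\rho < 1/4$. The main obstacle is the derivation of the three-ball inequality from the Carleman estimate, where one must carefully balance the two boundary contributions on the right-hand side against the middle-annulus contribution on the left, while tracking the translation-rescaling that transfers \Cref{lm:CarlemanModif} from centres in $B_{1/4}$ to arbitrary centres well inside $B_2$.
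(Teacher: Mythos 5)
Your proposal is sound, but it follows a genuinely different route from the paper. The paper proves \eqref{eq:vnaishingBall} in a single application of the translated Carleman estimate of \Cref{lm:CarlemanModif}: after discarding the trivial case $x_{\max}\in B(x_0,1/4)$, it chooses one cutoff $\chi$ that vanishes both on a small ball around $x_0$ and near $\partial B_2$, applies \eqref{eq:CarlemanModif} to $f=\chi u$, and compares a left-hand-side contribution (evaluated near $x_{\max}$, the maximizer of $|u|$ over $B_1$) against the two commutator terms: the inner one near $x_0$ is controlled by $\sup_{B(x_0,1/8)}|u|$ with a favourable weight, and the outer one near $\partial B_2$, controlled by $\sup_{B_2}|u|$, is absorbed by taking $\alpha\gtrsim N$ together with the doubling hypothesis, giving directly $\sup_{B_1}|u|\leq e^{CN}\sup_{B(x_0,1/8)}|u|$. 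You instead factor the same Carleman estimate through a quantitative three-ball (Hadamard) inequality and then propagate the lower bound along a chain of overlapping balls from $y^{\ast}$ to $x$ --- the classical propagation-of-smallness scheme. Both rest on the same analytic input, but yours is the modular, more generic route: once the three-ball inequality is isolated, the chaining argument is automatic and insensitive to the particular geometry, whereas the paper's one-shot argument is shorter but hinges on a delicate weight comparison between the ball around $x_{\max}$ and the boundary annulus. Two details to tighten in your version: the chain radius $\rho$ must be small enough (e.g. $\rho<1/16$ rather than $\rho<1/4$) so that every ball $B(z_i,4\rho)$ stays inside $B_2$; and the Carleman left-hand side controls $u$ only on the middle annulus, so to pass to $\sup_{B(y_0,r_2)}|u|$ you should first add $\sup_{B(y_0,2r_1)}|u|$ (trivially dominated by the inner factor) before converting $L^2$ to $L^\infty$ by Moser iteration.
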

\begin{proof}
Let us fix $x_0 \in B(0,1/2)$. Let $x_{\max} \in B(0,1)$ be such that $$|u(x_{\max})| = \sup_{B(0,1)} |u|.$$ We distinguish two cases.

\medskip

 \textbf{First case: $x_{\max} \in B(x_0,1/4)$.} Then we have by the doubling property \eqref{eq:bounddoublingindex},
$$\sup_{B(x_0,1/4)} |u| = \sup_{B(0,1)} |u| \geq \exp(-N) \sup_{B_2} |u|,$$
so \eqref{eq:vnaishingBall} holds.

\medskip

 \textbf{Second case: $x_{\max} \notin B(x_0,1/4)$.} Let $\chi \in C_c^{\infty}(B_2;[0,1])$ be a cut-off function such that
\begin{align*}
\chi &= 0\ \text{in}\ B(x_0,1/16),\\
\chi &= 1\ \text{in}\  B(0,15/8) \setminus B(x_0,1/8),\\
\chi &= 0\ \text{in}\ B(0,2) \setminus B(0,31/16).
\end{align*}
In particular $\chi \equiv 1$ in $B(x_{\max}, 1/8)$. By local elliptic regularity, see \cite[Theorem 8.8]{GT01}, we have that $u \in H_{\text{loc}}^2(B_2)$ so by a straightforward density argument one can apply the modified version of the Carleman estimate \eqref{eq:CarlemanModif} of \Cref{lm:CarlemanModif} to $f := \chi u$. By denoting $w(x)  =|x-x_0|$, we then obtain
$$
\alpha^3 \int_{B_{2}} w^{-1-2 \alpha} |f|^2 dx 
\leq C  \int_{B_{2}}  w^{2-2 \alpha} |\Delta_A f|^2 dx.
$$
By using the equation \eqref{eq:Aharmonic}, we deduce that
\begin{equation}
\label{eq:proofeasy1}
\alpha^3 \int_{B_{2}} w^{-1-2 \alpha} \chi^2 |u|^2 dx 
\leq C  \int_{B_{2}}  w^{2-2 \alpha} (|\nabla \chi|^2 |\nabla u|^2 +[|\nabla \chi|^2 + |D^2 \chi|^2] |u|^2) dx.
\end{equation}
By using the definition of $\chi$ and De Giorgi-Nash-Moser theory, see \cite[Theorem 4.1]{HL11} to pass from a $L^2$-estimate to a $L^{\infty}$-estimate, one can bound from below the left hand side as follows
\begin{equation}
\label{eq:proofeasy2}
\alpha^3 \int_{B_{2}} w^{-1-2 \alpha} \chi^2 |u|^2 dx \geq \exp(-C_2 \alpha) |u(x_{\max})|^2,
\end{equation}
for some positive constant $C_2>0$. Then, by Cacciopoli's estimates, one gets that there exist positive constants $C_3 > C_2 > C_1 > 0$ and a positive constant $C=C(A)>0$ such that
\begin{multline}
\label{eq:proofeasy3}
\int_{B_{2}}  w^{2-2 \alpha} (|\nabla \chi|^2 |\nabla u|^2 +[|\nabla \chi|^2 + |D^2 \chi|^2] |u|^2) dx \\
\leq C \exp(-C_3 \alpha) (\sup_{B_2} |u|)^2 + C \exp(-C_1 \alpha)( \sup_{B(x_0,1/8)} |u| )^2.
\end{multline}
By using the definition of $x_{\max}$, the doubling index estimate \eqref{eq:bounddoublingindex}, \eqref{eq:proofeasy1},\eqref{eq:proofeasy2}, \eqref{eq:proofeasy3} we therefore obtain that
$$ \exp(-C_2 \alpha) (\sup_{B_1} |u|)^2 \leq C \exp(-C_3 \alpha) \exp(2 N) (\sup_{B_1} |u|)^2 + C \exp(-C_1 \alpha)( \sup_{B(x_0,1/8)} |u| )^2.$$
Now the punchline, by taking $\alpha \geq C(C_2,C_3,A) N$, the first right hand side term can be hidden in the left hand side term because $C_3 > C_2$ to deduce that
$$  \sup_{B_1} |u| \leq \exp(C N) \sup_{B(x_0,1/8)} |u|.$$
This concludes the proof of \eqref{eq:vnaishingBall} recalling again \eqref{eq:bounddoublingindex}.
\end{proof}

\subsection{Sharp local $L^2$ estimates on annuli and consequences at $L^p$-level}

In this part, first we are going to prove the sharp $L^2$-estimate \eqref{eq:L2BernsteinfunctionAnnulusharmonic} of \Cref{tm:df90harmonic} following the original proof of $L^2$-Bernstein estimates for Laplace eigenfunctions from \cite{DF90}. Then, we deduce that the doubling index of $A$-harmonic functions on annulus of width comparable to the wavelength at $L^p$-level growth at most polynomially.
\begin{pr}\label{pr:doublingpolynomial}
There exist $r_0, C >0$ depending only on $A$, such for every $p \in [1, \infty]$, for every $A$-harmonic function $u \in H_{\text{loc}}^1(B_2) \cap L^{\infty}(B_2)$, i.e. satisfying \eqref{eq:Aharmonic} with a bounded doubling index $N$ defined in \eqref{eq:bounddoublingindex}, for every $r \in (0,r_0)$,
\begin{align}
\label{eq:LpBernsteinfunctionAnnulusharmonicFirst}
 \|u\|_{L^p\left(r < |x| < r\left(1+\frac{2}{N}\right)\right)} &\leq C N^{\beta} \|u\|_{L^p\left(r < |x| < r\left(1+\frac{1}{N}\right)\right)}, \qquad \beta = \frac{d-1}{2}.
 \end{align}
\end{pr}

\begin{proof}[Proofs of \eqref{eq:L2BernsteinfunctionAnnulus} of \Cref{tm:df90harmonic} and \Cref{pr:doublingpolynomial}]
We split the proof into several steps.

\medskip

 \textbf{Step 1: Carleman estimate to a truncated version of $u$.}
Let us take 
$\chi \in C_c^{\infty}(B_2)$ a cut-off function such that
\begin{align*}
\chi &= 0 \quad \text{for } |x|\le r(1+(1/4)\alpha^{-1}),\\
\chi &= 1 \quad \text{for } r(1+(1/2)\alpha^{-1}) \le |x| \le 15/8,\\
\chi &= 0 \quad \text{for } 31/16 \le |x| \le 2,
\end{align*}
satisfying the estimates
\begin{equation}
\label{eq:variationcutoffboundary}
|\nabla \chi| \le C ,\quad |D^2 \chi| \le C, \qquad 15/8 \le |x| \le 31/16,
\end{equation}
and
\begin{equation}
\label{eq:variationcutoffsmallball}
|\nabla \chi| \le C r^{-1} \alpha,\quad |D^2 \chi| \le C r^{-2} \alpha^{-2},\qquad 
r(1+(1/4) \alpha^{-1}) \le |x| \le r(1+(1/2) \alpha^{-1}).
\end{equation}

We define $f = \chi u$, note that $f \in H_{\text{loc}}^2(B_2)$ with $\text{supp}(f) \subset\subset \{r \le |x|\le 2\}$, so we can apply the Carleman estimates \eqref{eq:Carleman1} from Lemma \ref{lm:Carleman} and \eqref{eq:secondCarleman} from \Cref{lm:Carlemanpunctured} to $f$, taking $\varepsilon = \alpha^{-1}$ to get
\begin{equation}
\label{eq:secondCarlemanProof}
\alpha^3 \int_{|x|\le 2} \rho^{-1-2 \alpha} |f|^2 dx 
+  \frac{\alpha^4}{r^2} \int_{|x|\le r(1+2\alpha^{-1})} \rho^{-2 \alpha} |f|^2 dx 
\le C  \int_{|x|\le 2} \rho^{2-2 \alpha} |\Delta_A f|^2 dx.
\end{equation}
Using the elliptic equation \eqref{eq:Aharmonic} satisfied by $u$, we deduce from \eqref{eq:secondCarlemanProof}
\begin{equation}
\label{eq:FinCarlemanPunctured}
\alpha^3 \int_{|x|\le 2} \rho^{-1-2 \alpha} \chi^2 |u|^2 dx  
+ \frac{\alpha^4}{r^2} \int_{|x|\le r(1+\alpha^{-1})} \rho^{-2 \alpha} \chi^2 |u|^2dx 
\le C (I_1 +  I_2),
\end{equation}
where
\begin{equation}
\label{eq:defI1}
I_1 = \int_{r(1+(1/4) \alpha^{-1}) \le |x| \le r(1+(1/2) \alpha^{-1})}  
\rho^{2-2 \alpha} (|\nabla \chi|^2 |\nabla u|^2 + [|\nabla \chi|^2 + |D^2 \chi|^2] |u|^2) dx,
\end{equation}
and
\begin{equation}
\label{eq:defI2}
I_2 = \int_{15/8 \le |x| \le 31/16}  
\rho^{2-2 \alpha} (|\nabla \chi|^2 |\nabla u|^2 + [|\nabla \chi|^2 + |D^2 \chi|^2] |u|^2) dx.
\end{equation}

\medskip
\textbf{Step 2: Absorption of the boundary terms.}
The goal of this step is to absorb the cut-off terms from \eqref{eq:FinCarlemanPunctured} that are located near the boundary $|x|=2$, that is the term $I_2$ defined in \eqref{eq:defI2}. By \eqref{eq:variationcutoffboundary} and from local elliptic regularity estimates, there exist constants $C_3>0$ and $C=C(A)>0$ such that
\begin{equation}
\label{eq:estimateI2}
I_2 \le C e^{-C_3 \alpha} (\sup_{|x|\le 2} |u|)^2.
\end{equation}
Moreover, by the definition of $\chi$ and local elliptic regularity, one gets
\begin{equation}
\label{eq:lowerboundLHS}
\alpha^3 \int_{|x|\le 2} \rho^{-1-2 \alpha} \chi^2 |u|^2 dx 
\ge C^{-1} e^{-C_2 \alpha} (\sup_{|x-x_0|\le 1/4} |u|)^2,
\end{equation}
where $0 < C_2 < C_3$ and $x_0 \in \{|x|\le 2\}$ is such that $|x_0| = 3/8$.  
Using the vanishing order estimate \eqref{eq:vnaishingBall} from \Cref{lm:vanishingorderestimate} (note that $\chi \equiv 1$ in $|x-x_0|\le 1/4$ for $r$ small enough), we deduce
\begin{equation}
\label{eq:lowerboundLHS2}
\alpha^3 \int_{|x|\le 2} \rho^{-1-2 \alpha} \chi^2 |u|^2 dx 
\ge C^{-1} e^{-C_2 \alpha} e^{-CN} (\sup_{|x|\le 2} |u|)^2.
\end{equation}
By taking $\alpha \ge C(C_2,C_3,A) N$, we obtain from \eqref{eq:FinCarlemanPunctured}, \eqref{eq:estimateI2}, and \eqref{eq:lowerboundLHS2} that $I_2$ can be absorbed into the left-hand side
\begin{equation}
\label{eq:FinCarlemanPuncturedAbsorption}
\alpha^3 \int_{|x|\le 2} \rho^{-1-2 \alpha} \chi^2 |u|^2 dx  
+ \frac{\alpha^4}{r^2} \int_{|x|\le r(1+2\alpha^{-1})} \rho^{-2 \alpha} \chi^2 |u|^2 dx  
\le C I_1.
\end{equation}

\medskip
 \textbf{Step 3: Growth estimate at $L^2$-regularity.}
We now compare the second left-hand side term in \eqref{eq:FinCarlemanPuncturedAbsorption} and the right-hand side term to obtain the growth estimate. Since $\rho^{-2\alpha}$ has comparable amplitude for $r \le |x| \le r(1+2\alpha^{-1})$, we can simplify \eqref{eq:FinCarlemanPuncturedAbsorption} using \eqref{eq:defI1}
\begin{multline}
\label{eq:FinCarlemanPuncturedAbsorptionSimp}
\frac{\alpha^4}{r^2} \int_{|x|\le r(1+2\alpha^{-1})} \chi^2 |u|^2 dx \\
 \le C \int_{r(1+(1/4)\alpha^{-1}) \le |x| \le r(1+(1/2)\alpha^{-1})}  
r^2 (|\nabla \chi|^2 |\nabla u|^2 + [|\nabla \chi|^2 + |D^2 \chi|^2] |u|^2) dx.
\end{multline}
Using \eqref{eq:variationcutoffsmallball}, we deduce
\begin{multline}
\label{eq:FinCarlemanPuncturedAbsorptionSimp2}
\frac{\alpha^4}{r^2} \int_{|x|\le r(1+2\alpha^{-1})} \chi^2 |u|^2 dx \\
\le C \int_{r(1+(1/4)\alpha^{-1}) \le |x| \le r(1+(1/2)\alpha^{-1})}  
r^2 ((r^{-1}\alpha)^2 |\nabla u|^2 + (r^{-2}\alpha^2)^2 |u|^2) dx.
\end{multline}
By applying Cacciopoli's inequality on every ball $B(x_0,C_4 r \alpha^{-1}) \subset \subset B(x_0, C_5 r \alpha^{-1})$ for suitable constants $0 < C_4 < C_5$,
 we have \begin{equation} \int_{B(x_0,C_4 r \alpha^{-1})} |\nabla u|^2 dx \leq C \frac{\alpha^2}{r^2} \int_{B(x_0,C_5 r \alpha^{-1})} |u|^2) dx, \qquad \forall x_0 \in B_2,
\end{equation} then by taking a suitable covering of the annulus $\{r(1+(1/4)\alpha^{-1}) \le |x| \le r(1+(1/2)\alpha^{-1})\}$ by balls of radius $C_4 r \alpha^{-1}$, we get for some $C_0,C_0'>0$ such that $C_0 < 1/4 < 1/2 < C_0' < 3/4$, \begin{equation} \int_{r(1+(1/4)\alpha^{-1}) \le |x| \le r(1+(1/2)\alpha^{-1})} |\nabla u|^2 \leq C \frac{\alpha^2}{r^2} \int_{r(1+C_0\alpha^{-1}) \le |x| \le r(1+C_0'\alpha^{-1})} |u|^2) dx, 
\end{equation}
so from \eqref{eq:FinCarlemanPuncturedAbsorptionSimp2}, we deduce
\begin{equation}
\label{eq:FinCarlemanPuncturedAbsorptionSimp4}
\int_{r(1+2^{-1}\alpha^{-1}) \le |x| \le r(1+2\alpha^{-1})} |u|^2 dx 
\le C \int_{r(1+C_0\alpha^{-1}) \le |x| \le r(1+C_0'\alpha^{-1})} |u|^2 dx.
\end{equation}
By adding $\|u\|_{L^2(r \le |x| \le r(1+\alpha^{-1}))}^2$ on both sides,
\begin{equation}
\label{eq:FinCarlemanPuncturedAbsorptionSimp5}
\int_{r \le |x| \le r(1+2\alpha^{-1})} |u|^2 dx 
\le C \int_{r \le |x| \le r(1+\alpha^{-1})} |u|^2 dx.
\end{equation}
Recalling $\alpha \ge C(C_2,C_3,A)N$, and by iterating \eqref{eq:FinCarlemanPuncturedAbsorptionSimp5} a finite number of times depending on $C(C_2,C_3,A)$,  this gives \eqref{eq:L2BernsteinfunctionAnnulusharmonic}.

\medskip
 \textbf{Step 4: Growth estimate at $L^p$-regularity.}
For $p\in[2,\infty]$ (the case $p\in[1,2)$ is similar), by De Giorgi-Nash-Moser theory, see \cite[Theorem 4.1]{HL11} to pass from a $L^2$-estimate to a $L^{\infty}$-estimate,
\begin{equation}
\label{eq:Boot10}
\|u\|_{L^\infty(|x-x_0|\le C_6r\alpha^{-1})} 
\le C (\alpha r^{-1})^{d/2} \|u\|_{L^2(|x-x_0|\le C_7r\alpha^{-1})},
\qquad r < |x_0| < 2r.
\end{equation}
Then, from \eqref{eq:FinCarlemanPuncturedAbsorptionSimp4} and \eqref{eq:Boot10}, we deduce
\begin{equation}
\label{eq:Boot1infty}
\|u\|_{L^\infty(r(1+C_2\alpha^{-1}) \le |x| \le r(1+C_3\alpha^{-1}))}
\le C (\alpha r^{-1})^{d/2} 
\|u\|_{L^2(r(1+C_0\alpha^{-1}) \le |x| \le r(1+C_0'\alpha^{-1}))}.
\end{equation}
Using Hölder’s inequality and the volume bound $|r(1+C_0\alpha^{-1}) \le |x| \le r(1+C_0'\alpha^{-1}))| \le C r^d \alpha^{-1}$, we find
\begin{equation}
\label{eq:Boot3ter}
\|u\|_{L^p(r(1+C_2\alpha^{-1}) \le |x| \le r(1+C_3\alpha^{-1}))}
\le C \alpha^{\beta}
\|u\|_{L^p(r(1+C_0\alpha^{-1}) \le |x| \le r(1+C_0'\alpha^{-1}))}.
\end{equation}
Recalling $\alpha \ge C N$, we obtain
\begin{equation}
\label{eq:Boot3four}
\|u\|_{L^p(r(1+C_2\alpha^{-1}) \le |x| \le r(1+C_3\alpha^{-1}))}
\le C N^{\beta}
\|u\|_{L^p(r(1+C_0\alpha^{-1}) \le |x| \le r(1+C_0'\alpha^{-1}))}.
\end{equation}
By adding $\|u\|_{L^p(r \le |x| \le r(1+\alpha^{-1}))}$ on both sides, we finally obtain
\begin{equation}
\label{eq:FinCarlemanPuncturedAbsorptionSimp5Boot}
\|u\|_{L^p(r \le |x| \le r(1+2\alpha^{-1}))}
\le C N^{\beta}
\|u\|_{L^p(r \le |x| \le r(1+\alpha^{-1}))},
\end{equation}
which concludes the proof of \eqref{eq:LpBernsteinfunctionAnnulusharmonicFirst} by iterating \eqref{eq:FinCarlemanPuncturedAbsorptionSimp5Boot} a finite number of times.
\end{proof}

In the following, we will rather use the following estimate.
\begin{pr}
\label{pr:doublingpolynomialPratical}
For every $0<C_1<C_2<C_3<C_4$, there exist $r_0, C >0$ depending only on $A$, $C_1$, $C_2$, $C_3$, $C_4$, such that for every $p \in [1, \infty]$,  for every $A$-harmonic function $u \in H_{\text{loc}}^1(B_2) \cap L^{\infty}(B_2)$, i.e. satisfying \eqref{eq:Aharmonic} with a bounded doubling index $N$ defined in \eqref{eq:bounddoublingindex}, for every $r \in (0,r_0)$,
\begin{align}
 & \|u\|_{L^p\left(r\left(1+C_3\frac{\log(N)}{N}\right) < |x| < r\left(1+C_4\frac{\log(N)}{N}\right)\right)} \notag\\
  &\leq C N^{\beta} \|u\|_{L^p\left(r\left(1+C_1\frac{ \log(N)}{N}\right) < |x| < r\left(1+C_2\frac{\log(N)}{N}\right)\right)}, \qquad \beta =C(d, C_1, C_2, C_3, C_4)>0. \label{eq:LpBernsteinfunctionAnnulusharmonicSecondPratical}
 \end{align}
\end{pr}

\begin{proof}
The proof is similar to the proof of \Cref{pr:doublingpolynomial}. We follow it by only sketching it and pointing out the main differences.

\medskip

We first set $\varepsilon = \log(N)/N$. Starting from a cut-off version of $u$ vanishing in a $(r\varepsilon$)-neighborhood of $\{|x|\le r\}$, we apply the Carleman estimates \eqref{eq:Carleman1} from Lemma \ref{lm:Carleman}, and \eqref{eq:secondCarleman} from \Cref{lm:Carlemanpunctured}. The absorption of the boundary terms as in Step 2 remains unchanged, taking $\alpha \ge C N$.

\medskip 

For the growth estimate at $L^2$-regularity (i.e. Step 3), the difference is that the weight $\rho^{-2\alpha}$ does not have the same amplitude in the region $r \le |x| \le r(1+2\varepsilon)$, so one cannot simplify on both sides of the estimates. However, by using $\alpha \varepsilon = C \log(N)$, we obtain that \eqref{eq:FinCarlemanPuncturedAbsorptionSimp} becomes
\begin{multline}
\label{eq:FinCarlemanPuncturedAbsorptionSimpBis}
r^2 \int_{|x|\le r(1+2\varepsilon)} \chi^2 |u|^2 dx \\
\le C N^{C} \int_{r(1+(1/4)\varepsilon) \le |x| \le r(1+(1/2)\varepsilon)} 
r^2 (|\nabla \chi|^2 |\nabla u|^2 + [|\nabla \chi|^2 + |D^2 \chi|^2] |u|^2) dx.
\end{multline}
Then, at $L^2$-regularity, we first obtain 
\begin{equation}
\label{eq:FinCarlemanPuncturedAbsorptionSimp5Bis}
\int_{r \le |x| \le r(1+2\varepsilon)} |u|^2 dx 
\le C N^C \int_{r \le |x| \le r(1+\varepsilon)} |u|^2 dx.
\end{equation}
By employing local elliptic regularity estimates as in Step 4, we finally get \eqref{eq:LpBernsteinfunctionAnnulusharmonicSecondPratical}.
\end{proof}

\subsection{Almost sharp local $L^p$ estimates on annuli}
\label{sec:almostsharp}

This part is devoted to the proof of \eqref{eq:LpBernsteinfunctionAnnulusharmonic} of \Cref{tm:mainresult2}. The other estimates \eqref{eq:LpBernsteinfunctionharmonic}, \eqref{eq:LpBernsteingradientharmonic} are straightforward consequences, see \Cref{sec:endproofharmonic} below.

\medskip

In the next, we use the notation for the annulus
$$ A_{r}^{(\gamma_1, \gamma_2)} = \{x\in\mathbb{R}^d:\; r(1+\gamma_1)<|x|<r(1+\gamma)\}\qquad r>0, \ \gamma_2 >\gamma_1 \geq  0 .$$

We start with the following covering lemma.
\begin{lm}
\label{lem:coveringannulus}
Let \(r>0\) and \(0<\varepsilon<1\). There exists a finite cover of
\(A_r^{(0,2 \varepsilon)}\) by \(N\) balls \(B(x_i,4r\varepsilon)\) such that
each \(8r\varepsilon\)-ball meets at most \(C\) other \(8r\varepsilon\)-balls,
where \(C=C(d)=9^d>0\).
\end{lm}

\begin{proof}
Choose a maximal (with respect to inclusion) finite set of centers
\(\{x_i\}_{i=1}^N \subset A_r^{(0,2 \varepsilon)}\) for which the closed balls
\(\overline{B(x_i,2r\varepsilon)}\) are pairwise disjoint. Such a maximal
finite collection exists because \(A_r^{(0,2 \varepsilon)}\) is bounded. By
maximality, for every \(y \in A_r^{(0,2 \varepsilon)}\) the ball
\(B(y,2r\varepsilon)\) meets some \(B(x_i,2r\varepsilon)\). Hence
\(y \in B(x_i,4r\varepsilon)\) for some \(i\), so
\(\{B(x_i,4r\varepsilon)\}_{i=1}^N\) covers \(A_r^{(0,2 \varepsilon)}\).

\medskip

Fix \(i\). If \(B(x_j,8r\varepsilon)\) intersects \(B(x_i,8r\varepsilon)\),
then \(|x_j-x_i|<16r\varepsilon\). Each corresponding disjoint ball
\(B(x_j,2r\varepsilon)\) is contained in
\(B(x_i,16r\varepsilon+2r\varepsilon)=B(x_i,18r\varepsilon)\).
The number of such disjoint balls is bounded by the volume ratio
\[
\frac{\operatorname{Vol}(B(0,18r\varepsilon))}{\operatorname{Vol}(B(0,2r\varepsilon))} = 9^d.
\]
Hence any given \(B(x_i,8r\varepsilon)\) can intersect at most \(9^d\)
other balls \(B(x_j,8r\varepsilon)\). 
\end{proof}

\begin{figure}[h!]
\centering
\begin{tikzpicture}[scale=0.8]
  % Paramètres
  \pgfmathsetmacro{\r}{3}
  \pgfmathsetmacro{\eps}{0.12}
  \pgfmathsetmacro{\inner}{\r}
  \pgfmathsetmacro{\outer}{\r*(1+2*\eps)}
  \pgfmathsetmacro{\ballrad}{4*\r*\eps}

  % Centres des boules sur l'anneau
  \foreach \ang/\radmult in {
    0/1.02, 30/1.06, 60/1.10, 90/1.16, 120/1.08, 150/1.13,
    180/1.05, 210/1.18, 240/1.12, 270/1.04, 300/1.15, 330/1.09
  }{
    \coordinate (c\ang) at ({\ang}:\radmult*\r cm);
    \draw[fill=blue!25,draw=blue!60,semithick] (c\ang) circle (\ballrad cm);
  }
  
   % Cercles de rayon r et r(1+2ε)
  \draw[thick] (0,0) circle (\inner cm);
  \draw[thick] (0,0) circle (\outer cm);

  % Légende d'une boule
  %\node[above right] at (c90) {$B(x_i,4r\varepsilon)$};

\end{tikzpicture}
\caption{Covering of the annulus $A_r^{(0,2 \varepsilon)}$ by balls $B(x_i,4 r \varepsilon)$}
\end{figure}

\begin{cor}
\label{cro:eqnorms}
For every \(p \in [1,\infty)\), for every $u \in L^p(A_r^{(0,2 \varepsilon)})$, we have
\begin{equation}
\label{eq:equivalencenorms}
C^{-1} \sum_{i=1}^N \|u\|_{L^p(B(x_i, 8 r \varepsilon) \cap A_r^{(0,2 \varepsilon)})}^p
\le \|u\|_{L^p(A_r^{(0,2 \varepsilon)})}^p
\le \sum_{i=1}^N \|u\|_{L^p(B(x_i, 4 r \varepsilon) \cap A_r^{(0,2 \varepsilon)})}^p,
\end{equation}
for the same constant \(C=C(d)>0\) as in \Cref{lem:coveringannulus}.
\end{cor}

\begin{proof}
Since \(\{B(x_i,4r\varepsilon)\}_{i=1}^N\) covers \(A_r^{(0,2 \varepsilon)}\),
\[
\|u\|_{L^p(A_r^{(0,2 \varepsilon)})}^p
= \int_{A_r^{(0,2 \varepsilon)}} |u|^p \, dx
= \int_{A_r^{(0,2 \varepsilon)} \cap (\cup_i B(x_i,4r\varepsilon))} |u|^p \, dx
\le \sum_{i=1}^N \int_{A_r^{(0,2 \varepsilon)} \cap B(x_i,4r\varepsilon)} |u|^p \, dx,
\]
which gives the right-hand inequality in \eqref{eq:equivalencenorms}.

\medskip

For the left-hand inequality, note that
\[
\sum_{i=1}^N \|u\|_{L^p(B(x_i,8r\varepsilon) \cap A_r^{(0,2 \varepsilon)})}^p
= \sum_{i=1}^N \int_{A_r^{(0,2 \varepsilon)}} |u|^p \, 1_{B(x_i,8r\varepsilon)} \, dx
= \int_{A_r^{(0,2 \varepsilon)}} \Big( \sum_{i=1}^N 1_{B(x_i,8r\varepsilon)} \Big) |u|^p \, dx.
\]
By \Cref{lem:coveringannulus}, the overlap function
\(\sum_i 1_{B(x_i,8r\varepsilon)} \le C=C(d)\). Therefore,
\[
\sum_{i=1}^N \|u\|_{L^p(B(x_i,8r\varepsilon) \cap A_r^{(0,2 \varepsilon)})}^p
\le C \int_{A_r^{(0,2 \varepsilon)}} |u|^p \, dx
= C \|u\|_{L^p(A_r^{(0,2 \varepsilon)})}^p,
\]
which yields the left-hand inequality in \eqref{eq:equivalencenorms}.
\end{proof}

We can now pass to the proof of \eqref{eq:LpBernsteinfunctionAnnulusharmonic} of \Cref{tm:mainresult2}.
\begin{proof}[Proof of \eqref{eq:LpBernsteinfunctionAnnulusharmonic} of \Cref{tm:mainresult2}]
We consider the case \(p \in (2,\infty)\); the case \(p \in [1,2)\) follows by similar arguments, while the case \(p = \infty\) is discussed at the end of the proof.  

\medskip

\textbf{Step 0: Parameters and cut-off functions.} We begin by introducing the following free parameters, to be fixed later in the proof:
\begin{equation}
\alpha \ge C, \quad \varepsilon \in (0,c), \quad \delta \in (0,c).
\end{equation}

We apply \Cref{lem:coveringannulus} to cover \(A_r^{(0,2\varepsilon)}\) by \(N\) balls of radius \(4 r \varepsilon\) such that each \(8 r \varepsilon\)-ball intersects at most \(C\) other \(8 r \varepsilon\)-balls, where \(C\) depends only on the dimension \(d\). \medskip 

We then introduce two types of cut-off functions: a radial, global cut-off function
\begin{equation}
\label{eq:defchialpha}
\chi_{\alpha}(x) =
\begin{cases}
0, & |x| \le r(1+\alpha^{-1}),\\[1ex]
1, & |x| \ge r(1+2 \alpha^{-1}),
\end{cases}
\end{equation}
satisfying 
\begin{equation}
    \label{eq:variationcutoffchialpha}
    |\nabla \chi_\alpha| \leq  \frac{C \alpha}{r}, \qquad
    |D^2 \chi_\alpha| \leq  \frac{C \alpha^2}{r^2}, \qquad 
    r(1+\alpha^{-1}) \leq |x| \le r(1+2\alpha^{-1}),
    \end{equation}
and a local cut-off on each ball \(B(x_i,8r\varepsilon)\)
\begin{equation}
\label{eq:defcutoffsmallball}
\tilde{\chi}_i(x) =
\begin{cases}
1, & |x-x_i| \le 4 r \varepsilon,\\[1ex]
0, & |x-x_i| \ge 8 r \varepsilon,
\end{cases}
\end{equation}
satisfying
\begin{equation}
    \label{eq:variationcutoffsmallballepsilon}
    |\nabla \tilde{\chi}_i| \leq  \frac{C}{r \varepsilon}, \qquad
    |D^2 \tilde{\chi}_i| \leq \frac{C}{r^2 \varepsilon^2}, \qquad 
    4 r \varepsilon \le |x-x_i| \le 8 r \varepsilon.
    \end{equation}
Finally, the composite cut-off on each ball is defined by
\begin{equation}
\chi_i(x) := \chi_\alpha(x)\, \tilde{\chi}_i(x).
\end{equation}

\begin{figure}[h!]
\centering
\begin{tikzpicture}[scale=1.2]

  % Origine
  \fill (0,0) circle (1pt) node[below left] {$0$};

  % Global cutoff chi_alpha : cercle intérieur et extérieur
  \draw[thick] (0,0) circle (2);   % rayon r(1+1/alpha)
  \draw[thick] (0,0) circle (2.3); % rayon r(1+2/alpha)

  % Point x_i dans l'anneau
  \coordinate (xi) at (2.0,0.8);
  \fill (xi) circle (2pt) node[right] {$x_i$};

  % Local cutoff tilde chi_i
  \draw[thick,green!70!black] (xi) circle (0.8);   % rayon 4 r epsilon
  \draw[thick,green!70!black,dashed] (xi) circle (1.6); % rayon 8 r epsilon

  % Colorier uniquement l'intersection correcte
  \begin{scope}
    \clip (xi) circle (1.6);              % support de tilde chi_i
    \fill[green!40,opacity=0.6] (0,0) circle (5); % grand disque
    \fill[white] (0,0) circle (2);        % enlever la boule intérieure (|x| <= r(1+1/alpha))
  \end{scope}

  % Nom de la fonction résultante
 % \node at (0.7,2.3) {$\chi_i=\chi_\alpha \tilde{\chi}_i$};

\end{tikzpicture}
\caption{Support of $\chi_i = \chi_\alpha \tilde{\chi}_i$}
\end{figure}

\medskip

We split the proof into several steps.\medskip

 \textbf{Step 1: A $L^2$-Carleman estimate on each ball.} We apply two times the Carleman estimates \eqref{eq:secondCarleman} from \Cref{lm:Carlemanpunctured} to $\chi_i u$, where the small parameters are $\varepsilon$ and $\delta$, by taking
\begin{equation}
\label{eq:firstestimatealpha}
\alpha \geq C \varepsilon^{-1}, \ \alpha \geq C \delta^{-1}, 
\end{equation}
to obtain
\begin{multline}
\label{eq:applcarlemansmallball}
\frac{\alpha^2}{\varepsilon^2 r^2} \int_{|x| \leq r (1+2 \varepsilon)} \rho^{-2 \alpha} |\chi_i u|^2 dx +   \frac{1}{\varepsilon^2} \int_{|x| \leq r (1+2 \varepsilon)} \rho^{-2 \alpha} |\nabla (\chi_i u)|^2 dx \\
+ \frac{\alpha^2}{\delta^2 r^2} \int_{|x| \leq r (1+2 \delta)} \rho^{-2 \alpha} |\chi_i u|^2 dx 
+  \frac{1}{\delta^2} \int_{|x| \leq r (1+2 \delta)} \rho^{-2 \alpha} |\nabla (\chi_i u)|^2 dx \\
\leq  C  \int_{|x| \leq 2} \rho^{2-2 \alpha} |\Delta_A (\chi_i u)|^2 dx.
\end{multline}
By using the equation satisfied by $u$, one can develop the right hand side of \eqref{eq:applcarlemansmallball}. The cut-off region is divided into two parts. The first one, denoted by
\(\mathcal{O}_i\), corresponds to the region where \(\chi_{\alpha}\) varies, namely
\[
r\left(1+\alpha^{-1}\right) \leq |x| \leq r\left(1+2\alpha^{-1}\right).
\]
The second part, denoted by \(\mathcal{R}_i\), corresponds to the region where
\(\tilde{\chi}_i\) varies, that is,
\[
r\left(1+2\alpha^{-1}\right) \leq |x|
\quad \text{and} \quad
4r\varepsilon \leq |x-x_i| \leq 8r\varepsilon.
\]
More precisely by using \eqref{eq:defcutoffsmallball}, \eqref{eq:defchialpha}, \eqref{eq:variationcutoffchialpha}, \eqref{eq:variationcutoffsmallballepsilon}, \eqref{eq:firstestimatealpha} and the condition
\begin{equation}
\label{eq:firstestimatealphaSecond}
\delta \leq c \varepsilon, 
\end{equation}
we obtain
\begin{multline*}
 \int_{|x| \leq 2} \rho^{2-2 \alpha} |\Delta_A (\chi_i u)|^2 dx\\ 
\leq C \int_{\{r(1+\alpha^{-1}) \leq |x| \leq r(1+2 \alpha^{-1})\} \cap \{ |x-x_i|\leq 8 r \varepsilon\}} \rho^{2-2 \alpha} \left(r^{-2} \alpha^2 |\nabla u|^2 + r^{-4} \alpha^4 |u|^2 dx\right),\\
  + C \int_{\{r(1+2\alpha^{-1})) \leq |x|\} \cap \{4 r \varepsilon \leq |x-x_i|\leq 8 r \varepsilon\}}\rho^{2-2 \alpha} \left(r^{-2} \varepsilon^{-2} |\nabla u|^2 + r^{-4} \delta^{-4} |u|^2 dx\right)
\end{multline*}
so we have
\begin{equation}
 \int_{B_{2}} \rho^{2-2 \alpha} |\Delta_A (\chi_i u)|^2 dx  \leq C (\mathcal{O}_{i} + \mathcal{R}_{i}),\label{eq:applcarlemansmallballBisRHS}
\end{equation}
where 
\begin{align}
\mathcal{O}_{i} &= \int_{\{r(1+\alpha^{-1}) \leq |x| \leq r(1+2 \alpha^{-1})\} \cap \{ |x-x_i|\leq 8 r \varepsilon\}} \rho^{2-2 \alpha} \left(r^{-2} \alpha^2 |\nabla u|^2 + r^{-4} \alpha^4 |u|^2 dx\right),\label{eq:Oi}\\
\mathcal{R}_{i} &= \int_{\{r(1+2\alpha^{-1})) \leq |x|\} \cap \{4 r \varepsilon \leq |x-x_i|\leq 8 r \varepsilon\}}\rho^{2-2 \alpha} \left(r^{-2} \varepsilon^{-2} |\nabla u|^2 + r^{-4} \delta^{-4} |u|^2 dx\right).\label{eq:Ri}
\end{align}
We further decompose \(\mathcal{R}_i\) into two parts: an inner region, denoted by
\(\mathcal{R}_{i,\mathrm{in}}\), where \( |x| \leq r(1+2\delta) \), and an outer region,
denoted by \(\mathcal{R}_{i,\mathrm{out}}\). From \eqref{eq:applcarlemansmallball}, \eqref{eq:applcarlemansmallballBisRHS}, \eqref{eq:Ri} and \eqref{eq:firstestimatealphaSecond} we then have 
\begin{multline}
\label{eq:applcarlemansmallballCompactform}
\frac{\alpha^2}{\varepsilon^2 r^2} \int_{|x| \leq r (1+2 \varepsilon)} \rho^{-2 \alpha} |\chi_i u|^2 dx +   \frac{1}{\varepsilon^2} \int_{|x| \leq r (1+2 \varepsilon)} \rho^{-2 \alpha} \chi_i^2 |\nabla  u|^2 dx \\
+ \frac{\alpha^2}{\delta^2 r^2} \int_{|x| \leq r (1+2 \delta)} \rho^{-2 \alpha} |\chi_i u|^2 dx 
+  \frac{1}{\delta^2} \int_{|x| \leq r (1+2 \delta)} \rho^{-2 \alpha} \chi_i^2 |\nabla  u|^2 dx \\
\leq  C  (\mathcal{O}_{i} + \mathcal{R}_{i, \text{in}} + \mathcal{R}_{i, \text{out}}),
\end{multline}
where 
\begin{align}
\mathcal{R}_{i, \text{in}}&= \int_{\{r(1+2\alpha^{-1})) \leq |x| \leq r(1+2 \delta)\} \cap \{4 r \varepsilon \leq |x-x_i|\leq 8 r \varepsilon\}  }\rho^{2-2 \alpha} \left(r^{-2} \varepsilon^{-2} |\nabla u|^2 + r^{-4} \delta^{-4} |u|^2 dx\right) \label{eq:splittingI2iin}\\
\mathcal{R}_{i, \text{out}} &= \int_{\{r(1+2\delta) < |x|\} \cap \{4 r \varepsilon \leq |x-x_i|\leq 8 r \varepsilon\}  }\rho^{2-2 \alpha} \left(r^{-2} \varepsilon^{-2} |\nabla u|^2 + r^{-4} \delta^{-4} |u|^2 dx\right).\label{eq:splittingI2iout}
\end{align} 

 \textbf{Step 2: Summation of the estimates.} From  \eqref{eq:applcarlemansmallballCompactform}, we sum for $i=1$ to $i=N$ to obtain
\begin{multline}
\label{eq:applcarlemansmallballBisSum}
 \sum_{i=1}^N   \frac{\alpha^p}{\varepsilon^p r^p} \| \rho^{- \alpha} \chi_i u \|_{L^2(B(0,r(1+2 \varepsilon)))}^p 
 + \frac{1}{\varepsilon^p} \| \rho^{- \alpha} \chi_i \nabla u \|_{L^2(B(0,r(1+2 \varepsilon)))}^p\\+
 \sum_{i=1}^N   \frac{\alpha^p}{\delta^p r^p} \| \rho^{- \alpha} \chi_i u \|_{L^2(B(0,r(1+2 \delta)))}^p 
 + \frac{1}{\delta^p} \| \rho^{- \alpha} \chi_i \nabla u \|_{L^2(B(0,r(1+2 \delta)))}^p\\
 \leq C^p \left( \sum_{i=1}^N \mathcal{O}_{i} ^{p/2} + \mathcal{R}_{i, \text{in}}^{p/2} +\mathcal{R}_{i, \text{out}}^{p/2}\right).
\end{multline}
As we are going to see, the right hand side terms of \eqref{eq:applcarlemansmallballBisSum} will be treated in a rather different way. The first term $\mathcal{O}_{i}$ will contribute to our observation term, while the terms $\mathcal{R}_{i, \text{in}}$, $\mathcal{R}_{i, \text{out}}$ will be absorbed by using suitable conditions on the parameters $\alpha$, $\varepsilon$ and $\delta$, that would be compatible with \eqref{eq:firstestimatealpha} and \eqref{eq:firstestimatealphaSecond}.\medskip

 \textbf{Step 3: Absorption of the cut-off terms inside the annulus.} In this step, we focus on the second right hand side term of \eqref{eq:applcarlemansmallballBisSum}, i.e. $\mathcal{R}_{i, \text{in}}$ defined in \eqref{eq:splittingI2iin}. From \Cref{lem:coveringannulus}, we have
\begin{equation}
\label{eq:coveringapplication}
B(x_i, 8 r \varepsilon) \cap A_r^{(0,2 \delta)} \subset B(x_i, 8 r \varepsilon) \cap A_r^{(0,2 \varepsilon)} \subset \bigcup_{j \in V(i)} \{\tilde{\chi_j} = 1\},
\end{equation}
where $V(i)$ is the set of indices such that $B(x_j, 8 r \varepsilon) \cap B(x_i, 8 r \varepsilon) \neq \emptyset$. Moreover, we also have 
\begin{equation}
\label{eq:cardbounded}
\mathrm{Card}(V(i)) \leq C,
\end{equation} 
independent of $r$, $\varepsilon$.
By the successive application of \eqref{eq:coveringapplication}, \eqref{eq:cardbounded} the fact that $\tilde{\chi_j} = \chi_j$ in the zone $A_r^{(2\alpha^{-1}, 2\delta)} = \{(1+2 \alpha^{-1}) \leq |x| \leq r(1+2 \delta)\}$ and again \eqref{eq:cardbounded} we deduce  
\begin{align}
\notag
\sum_{i=1}^N \mathcal{R}_{i,\mathrm{in}}^{p/2} 
&\le C^p \sum_{i=1}^N 
\left(
\int_{A_r^{(2\alpha^{-1}, 2\delta)} \cap 
\bigcup\{\tilde{\chi}_j = 1\}}
\rho^{2-2\alpha}
\left( r^{-2}\varepsilon^{-2} |\nabla u|^2 
      + r^{-4}\delta^{-4} |u|^2 
\right)
\, dx
\right)^{p/2} \\
\notag
&\le C^p \sum_{i=1}^N \sum_{j\in V(i)}
\left(
\int_{A_r^{(2\alpha^{-1}, 2\delta)}
\cap \{\tilde{\chi}_j = 1\}}
\rho^{2-2\alpha}
\left( r^{-2}\varepsilon^{-2} |\nabla u|^2
      + r^{-4}\delta^{-4} |u|^2
\right)
\, dx
\right)^{p/2} \\
\notag
&\le C^p \sum_{i=1}^N \sum_{j\in V(i)}
\left(
\int_{A_r^{(2\alpha^{-1}, 2\delta)}
\cap \{\tilde{\chi}_j = 1\}}
\chi_j^2 \rho^{2-2\alpha}
\left( r^{-2}\varepsilon^{-2} |\nabla u|^2
      + r^{-4}\delta^{-4} |u|^2
\right)
\, dx
\right)^{p/2} \\
\notag
&\le C^p \sum_{i=1}^N 
\left(
\int_{A_r^{(2\alpha^{-1}, 2\delta)}}
\chi_i^2 \rho^{2-2\alpha}
\left( r^{-2}\varepsilon^{-2} |\nabla u|^2
      + r^{-4}\delta^{-4} |u|^2
\right)
\, dx
\right)^{p/2} \\
&\le C^p \sum_{i=1}^N 
\left(
\frac{1}{\delta^{2p} r^p}
\| \rho^{-\alpha}\chi_i u\|_{L^2(B(0,r(1+2\delta)))}^p
+
\frac{1}{\varepsilon^p}
\| \rho^{-\alpha}\chi_i \nabla u\|_{L^2(B(0,r(1+2\delta)))}^p
\right).
\label{eq:suminside}
\end{align}

So, from \eqref{eq:suminside}, one can absorb the second right hand side of \eqref{eq:applcarlemansmallballBisSum} by the last two left hand side terms of \eqref{eq:applcarlemansmallballBisSum} taking
\begin{equation}
\label{eq:deltaepsilonleq}
\alpha \geq C \varepsilon^{-1},\ \alpha \geq C\delta^{-1},\ \delta = c \varepsilon,
\end{equation}
for $c>0$ sufficiently small. So we have
\begin{multline}
\label{eq:applcarlemansmallballBisSumAbsorb1}
 \sum_{i=1}^N   \frac{\alpha^p}{\varepsilon^p r^p} \| \rho^{- \alpha} \chi_i u \|_{L^2(B(0,r(1+2 \varepsilon)))}^p 
 + \frac{1}{\varepsilon^p} \| \rho^{- \alpha} \chi_i \nabla u \|_{L^2(B(0,r(1+2 \varepsilon)))}^p\\
 \leq C^p \left( \sum_{i=1}^N \mathcal{O}_{i} ^{p/2} +\mathcal{R}_{i, \text{out}}^{p/2}\right).
\end{multline}

In the remaining part of the proof, we then fix $\delta$ as in \eqref{eq:deltaepsilonleq}, i.e. the constant $c>0$ is fixed. \medskip

 \textbf{Step 4: Elliptic regularity and bootstrap on each ball.}   Define the localized, weighted function on each ball by
\begin{equation}
\label{eq:vi}
v_{\alpha} := \rho^{-\alpha} \chi_\alpha u \quad \text{in } B(x_i,8 r \varepsilon).
\end{equation}
Then \(v_\alpha\) satisfies the elliptic equation
\begin{equation}
\label{eq:equationvi}
- \Delta_A v_\alpha = f_\alpha \quad \text{in } B(x_i,8 r \varepsilon),
\end{equation}
with source term
\begin{equation}
\label{eq:sourcetermvi}
|f_\alpha| \lesssim 
r^{-2} \alpha^2 \rho^{-\alpha} \chi_\alpha |u| 
+ r^{-1} \alpha \rho^{-\alpha} \big( |\nabla \chi_\alpha| |u| + \chi_\alpha |\nabla u| \big)
+ \rho^{-\alpha} \big( |D^2 \chi_\alpha| |u| + |\nabla \chi_\alpha| |\nabla u| \big).
\end{equation}
By interior \(W^{2,2}\)-estimates (cf. \cite[Theorem 9.11]{GT01}), the Sobolev embedding \(W^{2,2} \hookrightarrow W^{1,q}\) for some \(q=q(d)>2\), and a standard scaling argument, we obtain
\begin{multline}
\label{eq:sourcetermviEstimate}
\|v_\alpha\|_{L^q(B(x_i, C' r \varepsilon))} + r \varepsilon \|\nabla v_\alpha\|_{L^q(B(x_i, C' r \varepsilon))} \\
\lesssim (r \varepsilon)^{d/q - d/2} \Big[ r^2 \varepsilon^2 \|f_\alpha\|_{L^2(B(x_i,8 r \varepsilon))} + \|v_\alpha\|_{L^2(B(x_i,8 r \varepsilon))} \Big],
\end{multline}
for some \(4 < C' < 8\). Raising to the power \(p\) and summing over \(i=1,\dots,N\), and using \eqref{eq:applcarlemansmallballBisSumAbsorb1} and \eqref{eq:Oi}, we deduce the \(L^q\)-estimate
\begin{multline}
\label{eq:sourcetermviEstimateBis}
\sum_{i=1}^N \|v_\alpha\|_{L^q(B(x_i, C' r \varepsilon))}^p + (r \varepsilon)^p \|\nabla v_\alpha\|_{L^q(B(x_i,C' r \varepsilon))}^p\\
\leq C^p \big[ r (r \varepsilon)^{d/q - d/2} \alpha \varepsilon^3 \big]^p 
\sum_{i=1}^N \big( \mathcal{O}_i^{p/2} + \mathcal{R}_{i,\mathrm{out}}^{p/2} \big).
\end{multline}
A standard bootstrap argument using interior \(W^{2,q}\)-estimates (cf. \cite[Theorem 9.11]{GT01}) and Sobolev embeddings then yields, after finitely many steps (depending only on $d$) for some \(K=K(d)>0\)
\begin{equation}
\label{eq:sourcetermviEstimateTer}
\sum_{i=1}^N \|v_\alpha\|_{L^p(B(x_i, 4 r \varepsilon))}^p
\leq C^p \big[ r (r \varepsilon)^{d/p - d/2} \alpha^K \varepsilon^{2+K} \big]^p 
\sum_{i=1}^N \big( \mathcal{O}_i^{p/2} + \mathcal{R}_{i,\mathrm{out}}^{p/2} \big).
\end{equation}

%
%So up to increasing the constants $C>0$ in \eqref{eq:applcarlemansmallballCompactform}, using the definitions of $\mathcal{O}_{i}$ and $\mathcal{R}_{i, \text{out}}$, we deduce from \eqref{eq:sourcetermvi} and \eqref{eq:sourcetermviEstimate} that
%\begin{multline}
%\label{eq:Step3SumLpL2}
%\frac{\alpha^{p} }{\varepsilon^{p} r^{p}}  \left[ (r \varepsilon)^{-2p-d+dp/2} (r \alpha^{-1})^{2p} \|v_\alpha\|_{L^p(B(x_i, 8 r \varepsilon))}^{p} +  \|\rho^{-\alpha}\chi_i u\|_{L^2(B(x_i, 8 r \varepsilon))}^{p} \right] \\
%\leq C \left( \mathcal{O}_{i} ^{p/2} + \mathcal{R}_{i, \text{in}}^{p/2} +\mathcal{R}_{i, \text{out}}^{p/2}\right),
%\end{multline}
%for $p \in (2, ...)$. We then iterate to get \eqref{eq:Step3SumLpL2} for every $p \in [2, +\infty)$. \rouge{To finish, the bootstrap is not so clear.}\\

 \textbf{Step 5: Absorption of the cut-off terms near the outer radius.}  We focus on the second term on the right-hand side of \eqref{eq:sourcetermviEstimateTer}.  From \eqref{eq:vi} and \Cref{cro:eqnorms}, we first remark that
\begin{equation}
\label{eq:sumouterradius}
\sum_{i=1}^N \|v_\alpha\|_{L^p(B(x_i,4 r \varepsilon))}^p
\geq \|\rho^{-\alpha} \chi_\alpha u\|_{L^p(A_r^{(\delta/2,\delta)})}^p
\geq \min_{|x| \leq r (1 + \delta)} \rho^{-p\alpha} \|\chi_\alpha u\|_{L^p(A_r^{(\delta/2,\delta)})}^p.
\end{equation}
On the other hand, using \eqref{eq:splittingI2iout}, \eqref{eq:deltaepsilonleq}, Hölder's inequality, elliptic regularity, and \eqref{eq:defchialpha}, we have
\begin{equation}
\sum_{i=1}^N \mathcal{R}_{i,\mathrm{out}}^{p/2} 
\leq C^p r^{-p} \varepsilon^{-2p} (r \varepsilon)^{pd/2 - d} \max_{|x| \geq r (1+2 \delta)} (\rho^{-p\alpha}) 
\sum_{i=1}^N \|\chi_\alpha u\|_{L^p(A_r^{(3\delta/2, C\varepsilon)})}^p.
\end{equation}
By \Cref{cro:eqnorms}, this simplifies to
\begin{equation}
\sum_{i=1}^N \mathcal{R}_{i,\mathrm{out}}^{p/2} 
\leq C^p r^{-p} \varepsilon^{-2p} (r \varepsilon)^{pd/2 - d} \max_{|x| \geq r (1+2 \delta)} (\rho^{-p\alpha}) 
\|\chi_\alpha u\|_{L^p(A_r^{(3\delta/2, C\varepsilon)})}^p.
\end{equation}
Choosing
\begin{equation}
\label{eq:definitionepsilon}
\varepsilon = c^{-1} \frac{\log N}{N},
\end{equation}
and using the doubling estimate \eqref{eq:LpBernsteinfunctionAnnulusharmonicSecondPratical} in Proposition \ref{pr:doublingpolynomialPratical}, we obtain
\begin{equation}
\label{eq:RHSouter}
\sum_{i=1}^N \mathcal{R}_{i,\mathrm{out}}^{p/2} 
\leq C^p r^{-p} \varepsilon^{-2p} (r \varepsilon)^{pd/2 - d} N^{p\beta} \max_{|x| \geq r (1+2 \delta)} (\rho^{-p\alpha}) 
\|\chi_\alpha u\|_{L^p(A_r^{(\delta/2, \delta)})}^p.
\end{equation}
To absorb \eqref{eq:RHSouter} into the left-hand side of \eqref{eq:sourcetermviEstimateTer}, using \eqref{eq:sumouterradius}, it suffices to choose \(\alpha\) and \(\varepsilon\) such that
\begin{equation}
e^{c p \alpha \varepsilon} \geq C^p (\alpha \varepsilon)^{pK} N^{p\beta},
\end{equation}
which is achieved by setting
\begin{equation}
\label{eq:alphaepsilon}
\alpha \geq C \frac{\log N}{\varepsilon} \geq C N.
\end{equation}
With this choice, we finally obtain
\begin{equation}
\label{eq:sourcetermviEstimateTerFinalAbsorb}
\sum_{i=1}^N \|v_\alpha\|_{L^p(B(x_i,4 r \varepsilon))}^p
\leq C^p [r (r \varepsilon)^{d/p - d/2} \alpha^K \varepsilon^{2+K}]^p \sum_{i=1}^N \mathcal{O}_i^{p/2}.
\end{equation}

From now on, \(\alpha\) is fixed according to \eqref{eq:alphaepsilon}.\medskip

 \textbf{Step 6: Estimate of the remaining cut-off terms \(\mathcal{O}_i\).} Using the embedding \(L^p(B(x_i, 8 r \varepsilon)) \hookrightarrow L^2(B(x_i, 8 r \varepsilon))\), Caccioppoli's inequality and \Cref{cro:eqnorms}, we first obtain
\begin{align}
\label{eq:estimateout}
\sum_{i=1}^N \mathcal{O}_{i}^{p/2}  \leq C^p r^{-p} \alpha^{2p} (r \varepsilon)^{pd/2 - d} \max_{r \leq |x| \leq r (1+ \alpha^{-1})}(\rho^{-\alpha}) \|  u\|_{L^p(A_{r}^{(0, 2\alpha^{-1})})}^p .
\end{align}
Combining \eqref{eq:estimateout}, \eqref{eq:sourcetermviEstimateTerFinalAbsorb}, and \Cref{cro:eqnorms}, we deduce
$$ \min_{r \leq |x| \leq r(1+4 \alpha^{-1})} (\rho^{-\alpha})\| \chi_{\alpha} u\|_{L^p(A_{r}^{(0, 2\alpha^{-1})})}^p \leq C^p (\alpha \varepsilon)^{(2+K)p} \max_{r \leq |x| \leq r (1+ \alpha^{-1})}(\rho^{-\alpha})\| \rho^{-\alpha} u\|_{L^p(A_{r}^{(0, \alpha^{-1})})}^p.$$
On the annulus scale $r(1+ \alpha^{-1}) \leq |x| \leq r(1+4 \alpha^{-1})$, we can divide both sides by \(\rho^{-\alpha}\), which gives
$$ \| u\|_{L^p(r(1+2 \alpha^{-1}) \leq |x| \leq r(1+4 \alpha^{-1}))} \leq C (\alpha \varepsilon)^{(2+K)} \|  u\|_{L^p(r(1+1 \alpha^{-1}) \leq |x| \leq r(1+2 \alpha^{-1}))}.$$
Finally, recalling \eqref{eq:definitionepsilon} and \eqref{eq:alphaepsilon}, this yields
\begin{equation}
\label{eq:finalestimateLp}
\| u\|_{L^p(r(1+2 \alpha^{-1}) \leq |x| \leq r(1+4 \alpha^{-1}))} \leq C (\log(N))^{(2+K)} \|  u\|_{L^p(r(1+ \alpha^{-1}) \leq |x| \leq r(1+2 \alpha^{-1}))}.
\end{equation}

%This is not exactly what we announced in our main result. In the spirit of what Donnelly, Fefferman proved, it seems to be more reasonable to prove something like
%\begin{align}
%\label{eq:LpBernsteinfunctionAnnulusharmonicProof}
% \|u\|_{L^p\left(r < |x| < r\left(1+\frac{2}{N}\right)\right)} &\leq C N^{\varepsilon} \|u\|_{L^p\left(r < |x| < r\left(1+\frac{1}{N}\right)\right)}.
%\end{align}
%This is possible by instead of taking $\varepsilon = N^{-1}$ in the previous proposition, we take $\varepsilon = N^{-1} (\log(N))^{-1}$ because $e^{\alpha \varepsilon}$ should be big in function of $N^{p'}$ so $\alpha = \varepsilon^{-1} \log(N)$ then $\alpha \sim N$, and then the loss only appears in the right hand side of the estimate, i.e. in the last step comparing what happens near the inner radius.\\

\medskip

 \textbf{Step 7: Conclusion.} By iterating \eqref{eq:finalestimateLp} and recalling \eqref{eq:alphaepsilon} we finally obtain \eqref{eq:LpBernsteinfunctionAnnulusharmonic}.\medskip

 \textbf{Step 8: Proof of \eqref{eq:LpBernsteinfunctionAnnulusharmonic} in the $L^{\infty}$-case.} We can pass to the limit as $p \to \infty$ in \eqref{eq:finalestimateLp} because $C$ does not depend on $p$.
\end{proof}

\subsection{Proof of the growth estimates for $A$-harmonic functions}
\label{sec:endproofharmonic}

This (small) part is devoted to the proof of \eqref{eq:LpBernsteinfunctionharmonic} and \eqref{eq:LpBernsteingradientharmonic} from \Cref{tm:mainresult2}.

\begin{proof}[Proof of \eqref{eq:LpBernsteinfunctionharmonic} of \Cref{tm:mainresult2}]
A tiny adaptation of the proof \eqref{eq:LpBernsteinfunctionAnnulusharmonic} leads to
\begin{equation}
\label{eq:LpBernsteinfunctionAnnulusharmonic4}
\|u\|_{L^p\left(r' < |x| < r'\left(1+\frac{4}{N}\right)\right)} \leq C N^{\varepsilon} \|u\|_{L^p\left(r' < |x| < r'\left(1+\frac{1}{N}\right)\right)}\qquad \forall r' \in (0, r_0).
\end{equation}
We then apply \eqref{eq:LpBernsteinfunctionAnnulusharmonic4} with $r'=r(1-N)$, we obtain 
\begin{equation}
\|u\|_{L^p\left(r(1-\frac{1}{N}) < |x| < r(1-\frac{1}{N})\left(1+\frac{4}{N}\right)\right)} \leq C N^{\varepsilon} \|u\|_{L^p\left(r(1-\frac{1}{N}) < |x| < r(1-\frac{1}{N})\left(1+\frac{1}{N}\right)\right)}.
\end{equation}
We then have $r(1-\frac{1}{N})\left(1+\frac{4}{N}\right) \geq r(1+1/N)$, and $r(1-\frac{1}{N})\left(1+\frac{1}{N}\right) \leq r$ because $N \geq 2$. So this means that
\begin{equation}
\|u\|_{L^p\left(r(1-\frac{1}{N}) < |x| < r(1+\frac{1}{N})\right)} \leq C N^{\varepsilon} \|u\|_{L^p\left(r(1-\frac{1}{N}) < |x| < r\right)}.
\end{equation}
By adding $\|u\|_{L^p(B(0, r(1-1/N))}$ in both sides of the inequality, we obtain the inequality \eqref{eq:LpBernsteinfunctionharmonic}.
\end{proof}

\begin{proof}[Proof of \eqref{eq:LpBernsteingradientharmonic} of \Cref{tm:mainresult2}]
By a scaled local $W^{1,p}$ estimate applied to the equation \eqref{eq:Aharmonic}, see for instance \cite[Theorem 9.11]{GT01}, and by using \eqref{eq:LpBernsteinfunctionharmonic}, we have
$$ \|\nabla u\|_{L^p(B(x,r))} \leq C  \frac{N}{r} \| u\|_{L^p(B(x,r(1+N^{-1}))} \leq  C \frac{N^{1+\varepsilon}}{r} \| u\|_{L^p(B(x,r))} .$$
This ends the proof of \eqref{eq:LpBernsteingradientharmonic}.
\end{proof}

\subsection{Modification of the proofs for Laplace eigenfunctions}

The goal of this part is to prove \Cref{tm:mainresult1}.

\medskip

To simplify the notations, in all the following, the geodesic balls $B_g(x,r)$ will be denoted by $B(x,r)$ and the Riemannian volume of integration will be denoted by $dx$. The geodesic distance is denoted by $r(x) = d_g(x,x_0)$. \medskip

The following crucial $L^2$-Carleman estimates for the operator $-\Delta_g  - \lambda$  will be the main ingredients of the proof.
\begin{lm}
\label{lm:CarlemanLaplaceEigenStandard}
There exists a positive constant $C=C(M,g)>0$, $c=c(M,g)>0$, and an increasing function $\rho=\rho(r)$ for $0 < r < c$ satisfying
\begin{equation}
\label{eq:assumptionrhoMg}
C^{-1} \leq \frac{\rho(r)}{r} \leq C,\ C^{-1}  \leq |\partial_r \rho(r) | \leq C\ \qquad \forall r \in (0,c),
\end{equation}  
such that for every $r_M \in (0,c)$, $r \in (0,c)$, $x_0 \in M$, $\alpha \geq C (\sqrt{\lambda}+1)$, for all $f \in C_c^{\infty}(B_{r_M} \setminus \{x_0\})$, the following estimate holds
\begin{multline}
\label{eq:Carleman1LaplaceEigenStandard}
\alpha^3 \int_{B_g(x_0,r_M)} \rho^{-1-2 \alpha} |f|^2 dx + \alpha  \int_{B_g(x_0,r_M)} \rho^{1- 2 \alpha} |\nabla_g f|^2 dx  \\
\leq C  \int_{B_g(x_0,r_M)}  \rho^{2-2 \alpha} |\Delta_g f + \lambda f|^2 dx.
\end{multline}
\end{lm}
From now, $\rho$ is fixed as in \Cref{lm:CarlemanLaplaceEigenStandard}.
\begin{lm}
\label{lm:CarlemanpuncturedManifold}
There exists a positive constant $C=C(M,g)>0$ and $c=c(M,g)>0$ such that for every $r_M \in (0,c)$, $r \in (0,c)$, $x_0 \in M$, $\varepsilon \in (0,1)$,  $\alpha \geq C (\sqrt{\lambda}+\varepsilon^{-1})$, for all $f \in C_c^{\infty}(B_{r_M} \setminus B_r)$, the following estimate holds
\begin{multline}
\label{eq:secondCarlemananifold}
\frac{\alpha^2}{\varepsilon^2 r^2} \int_{B_g(x_0,r(1+2\varepsilon))} \rho^{-2 \alpha} |f|^2 dx + \frac{1}{\varepsilon^2} \int_{B_g(x_0,r(1+2\varepsilon))} \rho^{-2 \alpha} |\nabla_g f|^2 dx \\
\leq C  \int_{B_g(x_0,r_M)} \rho^{2-2 \alpha} |\Delta_g f + \lambda f|^2 dx.
\end{multline}
\end{lm}

The proofs of \Cref{lm:CarlemanLaplaceEigenStandard}, \Cref{lm:CarlemanpuncturedManifold} are similar to the ones of \Cref{lm:Carleman}, \Cref{lm:Carlemanpunctured}. The crucial difference between the proofs of \Cref{lm:Carleman}, \Cref{lm:Carlemanpunctured} is the presence of the zero-order term $\lambda f$ in the elliptic operator $-\Delta_g f  - \lambda f$ . This new term cannot be treated as a source term and absorbed by the Carleman parameter $\alpha$ because such a strategy would lead to \eqref{eq:secondCarlemananifold} for $\alpha \geq C (\lambda^{2/3}+1)$. This is why it has to be directly included in the symmetric part of the conjugated operator in the Carleman's strategy, note that here we crucially use the assumption that $\lambda$ is constant then does not depend on the $x$-variable. See \cite{DF88} and \cite{DF90} for details.

\medskip

The following classical vanishing order estimate holds for Laplace eigenfunctions.

\begin{lm}
\label{lm:vanishingorderestimateManifold}
There exist $C=C(M,g)>0$, $c=c(M,g)>0$ and $r_M, r_0 \in (0,c)$, such that for every $\lambda \geq 1$, for every $\varphi_{\lambda} \in C^{\infty}(M)$ satisfying \eqref{eq:Laplaceeingenfunction}, the following estimate holds
\begin{equation}
\label{eq:vnaishingBallManifold}
\sup_{B_g\left(x_0,\frac{r_0}{4}\right)} |\varphi_{\lambda}| \geq \exp(-C\sqrt{\lambda}) \sup_{B_g\left(x_0,r_M\right)} |\varphi_{\lambda}| \qquad \forall x \in B_g\left(x_0,\frac{r_0}{2}\right).
\end{equation}
\end{lm}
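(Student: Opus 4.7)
The plan is to transcribe the proof of Lemma~\ref{lm:vanishingorderestimate} from the Euclidean $A$-harmonic setting to the Riemannian setting, with $\sqrt{\lambda}$ playing the role of the doubling index $N$. Only two ingredients change: we invoke the Carleman estimate of Lemma~\ref{lm:CarlemanpuncturedManifold} in place of Lemma~\ref{lm:CarlemanModif} (its admissibility constraint $\alpha \geq C(\sqrt{\lambda}+1)$ is exactly what will allow us to absorb a factor $\exp(C\sqrt{\lambda})$), and we appeal to the classical doubling index bound \eqref{eq:doublingindexLaplaceeigen} to obtain $N_{\varphi_\lambda} \leq C\sqrt{\lambda}$ in place of the hypothesis \eqref{eq:bounddoublingindex}.

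Concretely, fix $x_0 \in M$ and $x \in B(x_0, r_0/2)$, and let $x_{\max} \in \overline{B(x_0, r_M)}$ realize $|\varphi_\lambda(x_{\max})| = \sup_{B(x_0, r_M)} |\varphi_\lambda|$. If $x_{\max} \in B(x, r_0/4)$ the conclusion is trivial, so assume otherwise. Taking $r_M$ small enough that all relevant geodesic balls sit in a single normal coordinate chart, pick a cutoff $\chi \in C_c^\infty(B(x_0, r_M);[0,1])$ that vanishes on a small neighborhood of $x$ and on a thin collar of $\partial B(x_0,r_M)$, and equals $1$ on an intermediate region containing $x_{\max}$. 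Apply Lemma~\ref{lm:CarlemanpuncturedManifold} centered at $x$ to $f := \chi \varphi_\lambda$. Since $(-\Delta_g - \lambda)\varphi_\lambda = 0$, the right-hand side of \eqref{eq:secondCarlemananifold} reduces to commutator terms of the form $|\nabla\chi||\nabla\varphi_\lambda| + |D^2\chi||\varphi_\lambda|$, supported on the two annular regions where $\nabla\chi \neq 0$.

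Using standard interior elliptic regularity for the Laplace eigenfunction $\varphi_\lambda$ to control gradients by sups on slightly enlarged balls (with constants depending only on $(M,g)$), and lower-bounding the left-hand side of \eqref{eq:secondCarlemananifold} by restricting integration to a small ball around $x_{\max}$ where $\chi \equiv 1$, we reach an inequality of the form
\begin{equation*}
e^{-C_2 \alpha} \bigl(\sup_{B(x_0,r_M)} |\varphi_\lambda|\bigr)^2 \leq C e^{-C_3 \alpha} \bigl(\sup_{B(x_0,r_M)} |\varphi_\lambda|\bigr)^2 + C e^{-C_1 \alpha} \bigl(\sup_{B(x, r_0/4)} |\varphi_\lambda|\bigr)^2,
\end{equation*}
with $0 < C_1 < C_2 < C_3$ coming from the values of the weight $\rho^{-1-2\alpha}$ on the three relevant annuli. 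Choosing $\alpha = C(C_2,C_3,M,g)\sqrt{\lambda}$, which is compatible with the admissibility constraint in Lemma~\ref{lm:CarlemanpuncturedManifold}, the first term on the right is absorbed into the left, yielding the claimed bound $\sup_{B(x_0, r_M)} |\varphi_\lambda| \leq e^{C\sqrt{\lambda}} \sup_{B(x, r_0/4)}|\varphi_\lambda|$.

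The main subtlety is structural rather than computational: the zero-order term $\lambda f$ in $-\Delta_g - \lambda$ must be handled inside the symmetric part of the conjugated operator in the proof of Lemma~\ref{lm:CarlemanpuncturedManifold}, not as a source term. Otherwise the Carleman threshold would degrade to $\alpha \gtrsim \lambda^{2/3}$, which would replace $\exp(C\sqrt{\lambda})$ by the much worse $\exp(C\lambda^{2/3})$ in \eqref{eq:vnaishingBallManifold} and ruin the sharp dependence. Once this is granted, the remaining work is purely geometric -- checking that the geodesic distance weight $\rho(y) = \mathrm{dist}(y,x)$ behaves like its Euclidean counterpart uniformly in the base point $x$, which holds for $r_M$ sufficiently small depending only on $(M,g)$.
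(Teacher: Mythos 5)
Your proposal follows exactly the route the paper indicates (the paper omits this proof, stating it is ``similar to the one of \Cref{lm:vanishingorderestimate}''), and your structural comments are correct — in particular the observation that the zero-order term $\lambda f$ must be built into the symmetric part of the conjugated operator to keep the Carleman threshold at $\alpha\gtrsim\sqrt\lambda$ rather than $\lambda^{2/3}$. So the approach matches the paper's.

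There is, however, one small but genuine imprecision in the displayed inequality, and it is exactly the place where the doubling-index bound \eqref{eq:doublingindexLaplaceeigen} must enter. You define $x_{\max}$ as the argmax of $|\varphi_\lambda|$ over the \emph{full} ball $B(x_0,r_M)$, and then lower-bound the Carleman left-hand side by integrating over a small ball around $x_{\max}$ ``where $\chi\equiv 1$.'' But $\chi$ must vanish on a collar of $\partial B(x_0,r_M)$, and nothing prevents $x_{\max}$ from sitting in that collar, in which case $\chi(x_{\max})\ne 1$ and the lower bound fails. The paper's Euclidean proof of \Cref{lm:vanishingorderestimate} avoids this by taking $x_{\max}$ to be the maximizer over the strictly smaller ball $B(0,1)$ (while the Carleman estimate lives on $B_2$), so that $x_{\max}$ is guaranteed to lie in the region where $\chi\equiv 1$; the price is that the resulting inequality compares $\sup_{B_1}$ on the left against $\sup_{B_2}$ on the right, and the doubling bound $\sup_{B_2}|u|\le e^{N}\sup_{B_1}|u|$ is then used to absorb at the cost of taking $\alpha\ge CN$. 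You should do the same here: take $x_{\max}$ to be the maximizer over, say, $B(x_0,r_M/2)$, obtain
\begin{equation*}
e^{-C_2\alpha}\bigl(\sup_{B(x_0,r_M/2)}|\varphi_\lambda|\bigr)^2
\le Ce^{-C_3\alpha}\bigl(\sup_{B(x_0,r_M)}|\varphi_\lambda|\bigr)^2
+ Ce^{-C_1\alpha}\bigl(\sup_{B(x,r_0/4)}|\varphi_\lambda|\bigr)^2,
\end{equation*}
then use \eqref{eq:doublingindexLaplaceeigen} to write $\sup_{B(x_0,r_M)}|\varphi_\lambda|\le e^{C\sqrt\lambda}\sup_{B(x_0,r_M/2)}|\varphi_\lambda|$, so that the absorption goes through once $\alpha\ge C\sqrt\lambda$ (compatible with the Carleman threshold), and finally upgrade $\sup_{B(x_0,r_M/2)}$ to $\sup_{B(x_0,r_M)}$ by one more application of \eqref{eq:doublingindexLaplaceeigen}. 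You already cite the doubling bound in your preamble; it just needs to be deployed at this step rather than only invoked in passing.
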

The proof of \Cref{lm:vanishingorderestimateManifold} is similar to the one of \Cref{lm:vanishingorderestimate} so we omit it.\medskip

With \Cref{lm:CarlemanpuncturedManifold} and \Cref{lm:vanishingorderestimateManifold}, we can first obtain the proof of \eqref{eq:L2BernsteinfunctionAnnulus}, then deduce a polynomial bound on growth estimates of Laplace eigenfunctions at the wavelength scale.
\begin{pr}\label{pr:doublingpolynomialLaplace}
There exist $C=C(M,g)$, $c=c(M,g)>0$ and $r_0 \in (0,c)$, such that for every $p \in [1, \infty]$, for every $\lambda \geq 1$, for every $\varphi_{\lambda} \in C^{\infty}(M)$ satisfying \eqref{eq:Laplaceeingenfunction}, for every $x \in M$, the following estimate 
\begin{align}
\label{eq:LpBernsteinfunctionAnnulusharmonicFirstLaplace}
 \|\varphi_\lambda(y)\|_{L^p\left(r < d_g(y,x) < r\left(1+\frac{2}{\sqrt{\lambda}}\right)\right)} &\leq C \lambda^{\beta} \|\varphi_\lambda(y)\|_{L^p\left(r < d_g(y,x) < r\left(1+\frac{1}{\sqrt{\lambda}}\right)\right)}, \qquad \beta = \frac{d-1}{4}.
 \end{align}
\end{pr}
\begin{rmk}
By combining \eqref{eq:LpBernsteinfunctionAnnulusharmonicFirstLaplace} with a local elliptic estimate, we obtain the following local $L^{\infty}$-Bernstein type estimate
\begin{equation}
\label{eq:weakLinftyBernsteingradientBetter}
\sup_{B_g\left(x,r\right)} |\nabla \varphi_{\lambda}| \leq C \frac{\lambda^{\frac{d+1}{4}}}{r} \sup_{B_g\left(x,r\right)} |\varphi_{\lambda}|.
\end{equation}
This estimate \eqref{eq:weakLinftyBernsteingradientBetter} is a little bit better than the previous Donnelly, Fefferman's estimate \eqref{eq:weakLinftyBernsteingradient}.
\end{rmk}
In the same spirit, we can obtain the following polynomial bound on growth estimates of Laplace eigenfunctions at a slightly bigger scale.
\begin{pr}
\label{pr:doublingpolynomialPraticalLaplace}
For every $0<C_1<C_2<C_3<C_4$, there exist $C>0$, $c>0$ and $r_0 \in (0,c)$ depending on $M$, $g$, $C_1, C_2, C_3, C_4$, such that for $p \in [1, \infty]$, for  every $\lambda \geq 1$, for every $\varphi_{\lambda} \in C^{\infty}(M)$ satisfying \eqref{eq:Laplaceeingenfunction}, for every $x \in M$, the following estimate holds
\begin{align}
 & \|\varphi_\lambda(y)\|_{L^p\left(r\left(1+C_3\frac{\log(\sqrt{\lambda})}{\sqrt{\lambda}}\right) < d_g(y,x) < r\left(1+C_4\frac{\log(\sqrt{\lambda})}{\sqrt{\lambda}}\right)\right)} \notag\\
  &\leq C \lambda^{\beta} \|\varphi_\lambda(y)\|_{L^p\left(r\left(1+C_1\frac{ \log(\sqrt{\lambda})}{\sqrt{\lambda}}\right) <  d_g(y,x) < r\left(1+C_2\frac{\log(\sqrt{\lambda})}{\sqrt{\lambda}}\right)\right)}, \ q_d =C(d, C_1, C_2, C_3, C_4)>0. \label{eq:LpBernsteinfunctionAnnulusharmonicSecondPraticalLaplace}
 \end{align}
\end{pr}
The proof of \Cref{tm:mainresult1} now mainly follows \Cref{sec:almostsharp} and \Cref{sec:endproofharmonic}. One of the main difference is that we are not still working with the $A$-harmonic equation $\mathrm{div}(A(x) \nabla u) = 0$ but instead with the equation $-\Delta_g u - \lambda u = 0$. But, because the elliptic regularity estimates are performed in small balls of radius $\varepsilon = \log(\sqrt{\lambda})/\sqrt{\lambda}$, then the lower order term part $\lambda u$ appearing in the estimates with only a logarithm loss does not affect the overall strategy.

\section{Extensions}
\label{sec:extensions}

The goal of this part is to give possible several generalizations of our main results \Cref{tm:mainresult1} and \Cref{tm:mainresult2}. For the sake of simplicity, we only give sketches of the proof, the details will be omitted. We also propose some open problems that can be investigated in the future.

\subsection{Manifolds with boundaries}
 The treatment of $C^{\infty}$-manifolds $M$, possibly with boundaries i.e. $\partial M \neq \emptyset$, can also be done. So analogues of \Cref{tm:mainresult1} hold, for Laplace eigenfunctions
\begin{equation*}
- \Delta_g \varphi_{\lambda} = \lambda \varphi_{\lambda}\ \text{in}\ M,\ (\varphi_{\lambda} = 0\ \text{on}\ \partial M)\ \text{or}\ (\partial_{\nu} \varphi_{\lambda} = 0\ \text{on}\ \partial M).
\end{equation*}
The proofs are obtained from \Cref{tm:mainresult1} and the double manifold trick, see for instance \cite{DF90b} or more precisely \cite[Section 3]{BM23}, that consists in reducing the
question to the case of a manifold without boundary by gluing two copies of $M$ along
the boundary in such a way that the new double manifold $\tilde{M}$ inherits a Lipschitz metric, which allows one to apply the previous results (without boundary) to this double manifold.

\subsection{On elliptic differential inequalities} First, we have the following generalization of local $L^2$-Berstein estimates.
\begin{tm}
\label{tm:mainresult1Generalization}
There exist $r_0, C >0$ depending only on $M$, such that for function $\varphi \in C^{\infty}(M)$, satisfying the elliptic differential inequality
\begin{equation}
\label{eq:functionGeneralization}
|- \Delta_g \varphi| \leq \lambda |\varphi| + \mu |\nabla \varphi|\ \text{in}\ M,
\end{equation}
where $\lambda, \mu \geq 1$, then for every $x \in M$, $r \in (0,r_0)$,
\begin{equation}
\label{eq:LinftyBernsteinGeneralization}
 \|\varphi\|_{L^2\left(B_g\left(x,r\left(1+\min\left(\frac{1}{\lambda^{2/3}}, \frac{1}{\mu^{2}} \right) \right)\right)\right)}\leq C  \|\varphi\|_{L^2(B_g(x,r))},
\end{equation}
and
\begin{equation}
\label{eq:LinftyBernsteingradientGeneralization}
\|\nabla \varphi\|_{L^2\left(B_g\left(x,r\left(1+\min\left(\frac{1}{\lambda^{2/3}}, \frac{1}{\mu^{2}} \right) \right)\right)\right)} \leq C \frac{\max\left(\lambda^{2/3}, \mu^2\right)}{r} \|\varphi\|_{L^2\left(B_g\left(x,r\right)\right)}.
\end{equation}
\end{tm}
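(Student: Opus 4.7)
The plan is to run the proof of \Cref{tm:mainresult1} essentially verbatim, tracking how the zero-order perturbation $\lambda|\varphi|$ and the first-order perturbation $\mu|\nabla\varphi|$ appearing in \eqref{eq:functionGeneralization} interact with the Carleman weights and with the bootstrap. The main observation is an absorption principle for the Carleman estimate \eqref{eq:secondCarlemananifold} of \Cref{lm:CarlemanpuncturedManifold} applied with $\lambda=0$ (i.e.\ to the pure Laplacian): since $|\Delta_g f|^2 \leq 3\lambda^2 |f|^2 + 3\mu^2 |\nabla f|^2 + (\text{commutator terms})$ whenever $f = \chi \varphi$, a straightforward comparison gives
\begin{equation*}
\int_{B(x_0,r_M)} \rho^{2-2\alpha} \lambda^2 |f|^2\,dx \leq r_M^3 \lambda^2 \alpha^{-3} \cdot \alpha^3 \int \rho^{-1-2\alpha} |f|^2 \,dx,
\end{equation*}
\begin{equation*}
\int_{B(x_0,r_M)} \rho^{2-2\alpha} \mu^2 |\nabla f|^2\,dx \leq r_M \mu^2 \alpha^{-1} \cdot \alpha \int \rho^{1-2\alpha} |\nabla f|^2\,dx,
\end{equation*}
so both perturbations are absorbed by the left-hand side of \eqref{eq:secondCarlemananifold} as soon as $\alpha \geq C \max(\lambda^{2/3},\mu^2)$.

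With this absorption in hand, I would first upgrade the vanishing order estimate of \Cref{lm:vanishingorderestimateManifold} to
\begin{equation*}
\sup_{B(x_0,r_0/4)}|\varphi| \geq \exp\bigl(-C\max(\lambda^{2/3},\mu^2)\bigr) \sup_{B(x_0,r_M)}|\varphi|,
\end{equation*}
by rerunning the proof of \Cref{lm:vanishingorderestimate} with the forced lower bound $\alpha \geq C\max(\lambda^{2/3},\mu^2)$. Then Steps~1--3 of the proof of \eqref{eq:LinftyBernsteinfunction} apply to $f=\chi\varphi$ and produce the $L^2$ Bernstein bound \eqref{eq:FinCarlemanPuncturedAbsorptionSimp4Manifold} for the same range of $\alpha$.

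The third step is the bootstrap from $L^2$ to $L^\infty$ carried out in Steps~4--5 of the proofs of \Cref{tm:mainresult1} and \Cref{tm:mainresult2}. At each iteration I would apply the scaled elliptic regularity \eqref{eq:localellipticscaled} after controlling $\Delta_g \varphi$ via \eqref{eq:functionGeneralization}. The additional contributions from the zero- and first-order terms are of order $(r\lambda/\alpha^2)^2$ and $(r\mu/\alpha)^2$ relative to the term they perturb, both bounded by a fixed small constant as long as $\alpha \geq C\max(\lambda^{2/3},\mu^2)$ and $r \leq r_0$, so they do not spoil the iteration. After $d$ iterations one obtains $\|\varphi\|_{L^\infty(B(x_0,r(1+C'_d\alpha^{-1})))} \leq C\|\varphi\|_{L^\infty(B(x_0,r))}$, which, iterated, yields \eqref{eq:LinftyBernsteinGeneralization}. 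Finally, the gradient bound \eqref{eq:LinftyBernsteingradientGeneralization} is deduced from \eqref{eq:LinftyBernsteinGeneralization} by a scaled local elliptic estimate, exactly as in the deduction of \eqref{eq:LinftyBernsteingradient} from \eqref{eq:LinftyBernsteinfunction}, using \eqref{eq:functionGeneralization} on a ball of radius $r/\max(\lambda^{2/3},\mu^2)$.

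The main obstacle will be the propagation of the first-order perturbation $\mu|\nabla\varphi|$ through the bootstrap: unlike the zero-order term, it couples back into the gradient that is currently being controlled at each iteration, so one must verify at every $L^p$ scale that the factor in front of $\|\nabla\varphi\|_{L^p}$ stays a small fraction of the coefficient produced by the Carleman/elliptic machinery. This is precisely what forces the exponent $\mu^2$ rather than $\mu$, since one needs $\alpha \gtrsim \mu^2 \rho$ in order to absorb $\mu^2 \rho^{2-2\alpha}|\nabla f|^2$ into $\alpha \rho^{1-2\alpha}|\nabla f|^2$.
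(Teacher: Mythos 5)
Your broad plan matches the paper's sketch: apply the punctured Carleman estimate for the pure operator $-\Delta_g$, absorb the right-hand side perturbation by taking $\alpha \geq C\max(\lambda^{2/3},\mu^2)$, upgrade the vanishing-order estimate accordingly, and then re-run the bootstrap. You correctly identify the threshold $\alpha \gtrsim \max(\lambda^{2/3},\mu^2)$ as the one that makes both the zero-order and first-order perturbations subcritical relative to the Carleman left-hand side, and your factor bookkeeping is essentially right (the zero-order relative factor should read $(r^2\lambda/\alpha^2)^2$ rather than $(r\lambda/\alpha^2)^2$, but that only makes your inequality safer).

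The genuine gap is in Steps~4--5, exactly where you flag your ``main obstacle'' but stop short of resolving it. The scaled elliptic regularity estimate \eqref{eq:localellipticscaled} gives $\|\nabla\varphi\|_{L^{p_n}}$ on an \emph{inner} annulus in terms of $\|\Delta_g\varphi\|_{L^{p_n}}$ on a strictly larger \emph{outer} annulus; replacing $\|\Delta_g\varphi\|_{L^{p_n}}$ by $\lambda\|\varphi\|_{L^{p_n}} + \mu\|\nabla\varphi\|_{L^{p_n}}$ therefore produces $\|\nabla\varphi\|_{L^{p_n}}$ on the outer annulus, which cannot be absorbed directly into the left-hand side (domain mismatch), and more seriously, at stage $n+1$ of the iteration you need $\|\nabla\varphi\|_{L^{p_{n+1}}}$ to estimate the source term, whereas the single-quantity Gagliardo--Nirenberg step only upgrades $\|\varphi\|_{L^{p_n}}$ to $\|\varphi\|_{L^{p_{n+1}}}$. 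The paper's sketch makes this concrete: one must run the bootstrap \emph{simultaneously} on $\varphi$ and $\nabla\varphi$, using the full $W^{2,p}$ interior estimate to control $\|D^2\varphi\|_{L^{p_n}}$ and then applying the Gagliardo--Nirenberg interpolation both to $\varphi$ and to $\nabla\varphi$ to pass to the level $p_{n+1}$. Without this two-track iteration (or a comparable nested-annuli absorption), the $\mu|\nabla\varphi|$ term leaves the $L^{p_n}$-gradient norm uncontrolled precisely where you need it, so as written the argument does not close.
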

The proof of \Cref{tm:mainresult1Generalization} cannot use the standard lifting trick from \eqref{eq:liftingtrick} so one needs to proceed differently. \medskip

First, by a standard $L^2$-Carleman estimate and the arguments of \cite{DF88}, the proof consists in establishing the following vanishing order estimate for $\varphi$.\medskip

\noindent \textbf{Bound on the doubling index.} There exist $r_0, C >0$ depending only on $M$, such that for every function $\varphi \in C^{\infty}(M)$ satisfying \eqref{eq:functionGeneralization}, for every $x \in M$, $r \in (0,r_0)$,
\begin{equation}
\label{eq:doublingindexLaplaceeigenGeneralizationBis}
\sup_{B_g\left(x,2 r\right)} |\varphi| \leq e^{C \max\left(\lambda^{2/3}, \mu^2\right)} \sup_{B_g\left(x,r\right)} |\varphi|.
\end{equation}

Secondly, by the application of a $L^2$-Carleman estimate on a punctured geodesic ball, taking $\alpha \geq C \max\left(\lambda^{2/3}, \mu^2\right)$ to absorb the right hand side terms and by using the arguments of the proof of \cite[Theorem 1]{DF90} together with \eqref{eq:doublingindexLaplaceeigenGeneralizationBis}, one is able to prove \eqref{eq:LinftyBernsteinGeneralization} then \eqref{eq:LinftyBernsteingradientGeneralization}. $L^p$-versions can also be obtained, with an arbitrary small loss.\medskip

Note that \eqref{eq:LinftyBernsteinGeneralization} and \eqref{eq:LinftyBernsteingradientGeneralization} are crucially related to the vanishing order estimate \eqref{eq:doublingindexLaplaceeigenGeneralizationBis} that is itself related to the possible rate of decay to the solutions of second order differential inequalities in the Euclidean space $\R^d$ inspired by the so-called Landis conjecture \cite{KL88}. According to the Meshkov type counterxamples to the Landis conjecture for complex-valued functions \cite{Mes91} and \cite{Dav14}, \eqref{eq:doublingindexLaplaceeigenGeneralizationBis} is probably sharp so are \eqref{eq:LinftyBernsteinGeneralization} and \eqref{eq:LinftyBernsteingradientGeneralization} if we consider $\varphi \in C^{\infty}(M;\C)$. But, one can probably sharpen \eqref{eq:LinftyBernsteinGeneralization} and \eqref{eq:LinftyBernsteingradientGeneralization} when $d=2$, assuming that $\varphi$ is real-valued by using the recent paper on vanishing order estimates of real-valued solutions to second order elliptic equations in the plane,  \cite{LMNN20} or \cite{LBS23}. When $d \geq 3$, and assuming that $\varphi$ is real-valued, we do not know if one can sharpen \eqref{eq:LinftyBernsteinGeneralization} and \eqref{eq:LinftyBernsteingradientGeneralization} but in that direction, one can read the very recent preprint \cite{FK24} that constructs a real-valued counterexample to the quantitative Landis conjecture in $\R^d$ for $d \geq 4$.\medskip

Possible generalizations can also be done for solutions, with bounded doubling index, of 
\begin{equation*}
\label{eq:AharmonicGene}
- \mathrm{div}(A(x) \nabla u) + W \cdot \nabla u + V u = 0\ \text{in}\ B_2,
\end{equation*}
where $A=(a^{ij}(x))_{1 \leq i, j \leq d}$ symmetric, uniformly elliptic, with Lipschitz entries, that is satisfying \eqref{eq:LipschitzAB2}. The lower order terms are given by $W  = W(x) \in L^{\infty}(B_2;\C^d)$ and $V \in L^{\infty}(B_2;\C)$ satisfying
\begin{equation*}
\label{eq:perturbationterm}
|W(x)| + |V(x)| \leq \Lambda_3,\qquad x \in B_2,
\end{equation*}
for some $\Lambda_3 >0$.

\subsection{Uniform bound on the doubling index on spheres}

We focus on $A$-harmonic functions $u$ in $B_2$, i.e.
\begin{equation}
\label{eq:AharmonicFreq}
- \mathrm{div}(A(x) \nabla u) = 0\ \text{in}\ B_2,
\end{equation} 
with bounded doubling index
\begin{equation}
\label{eq:bounddoublingindexFreq}
N_u(B(0,1)) := \log\left(\frac{\sup_{B\left(0,2\right)} |u|}{\sup_{B\left(0,1\right)} |u|}\right) \leq N,\qquad N \geq 1.
\end{equation}

In the spirit of \Cref{tm:df90harmonic}, we have the following result at $L^2$-level, concerning the doubling index on spheres.
\begin{tm}
\label{tm:df90harmonicSpheres}
There exist $r_0, C >0$ depending only on $A$ such that for every $A$-harmonic function $u \in H_{\text{loc}}^1(B_2) \cap L^{\infty}(B_2)$, i.e. satisfying \eqref{eq:AharmonicFreq} with a bounded doubling index $N$ defined in \eqref{eq:bounddoublingindexFreq}, for every $r \in (0,r_0)$,
\begin{align}
\label{eq:L2BernsteinfunctionAnnulusharmonicSpheres}
 \|u(x)\|_{L^2\left(|x|= r\left(1+\frac{1}{N}\right)\right)} \leq C  \|u(x)\|_{L^2\left(|x|=r\right)}.
\end{align}
\end{tm}
The proof is based on the frequency function approach. We follow \cite[Section 2]{LM20}.
\begin{proof}
Let $u$ be a solution to the equation $- \mathrm{div}(A(x)\nabla u(x))=0$. We consider weighted averages of $|u|^2$ over spheres
\[H(r)=r^{1-d}\int_{\partial B_r}\mu(x)|u(x)|^2ds(x).\]
We then define
\begin{equation}
\label{eq:INH}
I(r)=r^{1-d}\int_{B_r}(A\nabla u, \nabla u)=r^{-d}\int_{\partial B_r}(uA\nabla u,x),\quad N(r)=\frac{rI(r)}{H(r)}.
\end{equation}
Then 
\begin{equation}\label{eq:H'}
H'(r)=2I(r)+\mathcal{O}(C H(r)),\quad N(r)=\frac{rH'}{2H}+\mathcal{O}(C r).
\end{equation}
\noindent \textbf{Fact 1.} There exists $C>0$  that depends only on the dimension and $A$ such that the function $e^{C r}N(r)$ is an increasing function of $r$.\medskip

\noindent \textbf{Fact 2.} We have that $N(R) \leq C (N+1)$.\medskip

For any $r<R<R_0/2$, we use \eqref{eq:H'}, \eqref{eq:INH} and apply Fact 1
\[H'(r)\le 2I(r)+cH(r)=(2r^{-1}N(r)+c)H(r)\le (2r^{-1}N(R)e^{C(R-r)}+c)H(r).\]
Integrating $H'(r)/H(r)$ over an interval $[\rho, \rho(1+N(R)^{-1})]$ we get
\[\int_{\partial B_{\rho(1+N(R)^{-1})}}\mu(x)|u(x)|^2ds(x)\le C \int_{\partial B_{\rho}}\mu(x)|u(x)|^2ds(x),\]
The conclusion of the proof of \eqref{eq:L2BernsteinfunctionAnnulusharmonicSpheres} follows from Fact 2.
\end{proof}

We do not know if it is possible to obtain the generalization of \Cref{tm:df90harmonicSpheres} for $L^p$-norms. \medskip

The extension of \Cref{tm:df90harmonicSpheres} to Laplace eigenfunctions is probably possible by reducing the equation $-\Delta_g u - \lambda  u$ to $-\mathrm{div}(A(x) \nabla u) = 0$ in the geodesic annulus $r < |x| < r(1+\sqrt{\lambda}^{-1})$, that has Poincaré constant bounded by $C r \sqrt{\lambda}^{-1}$, so for $r \in (0,r_0)$ sufficiently small, one can construct a positive multiplier to the equation $-\Delta_g \phi - \lambda  \phi = 0$, then set $v = u/\phi$ that satisfies $-\mathrm{div}(\phi^2 A(x) \nabla v) = 0$. A careful inspection of the constant appearing in the proof of the frequency function approach depending on $A$ and $\phi$, then on $\lambda$ should be tracked.

\subsection{Linear combination of eigenfunctions} An interesting open problem is the generalization of local $L^{\infty}$-Bernstein estimates for linear combination of eigenfunctions. While global $L^{\infty}$-Bernstein estimates \eqref{eq:globalBernsteinSum} have been established, it seems that its local counterpart has not been investigated yet. Let us recall that from \cite[Theorem 14.3]{JL99}, the following result holds. \medskip

\noindent \textbf{Bound on the doubling index.} There exist $r_0, C >0$ depending only on $M$, such that for every linear combination of Laplace eigenfunctions
\begin{equation}
\label{eq:deflinearcombination}
\Phi_{\Lambda} = \sum_{\lambda_k \leq \Lambda} a_k \varphi_{\lambda_k},\qquad a_k \in \C,\ \Lambda \geq 1,\ \text{with}\ -\Delta_{g} \varphi_{\lambda_k} = \lambda_k \varphi_{\lambda_k}\ \text{in}\ M,
\end{equation}
then for every $x \in M$, $r \in (0,r_0)$,
\begin{equation}
\label{eq:doublingindexLaplaceeigenLinear}
\sup_{B_g\left(x,2 r\right)} |\Phi_{\Lambda}| \leq e^{C \sqrt{\Lambda}} \sup_{B_g\left(x,r\right)} |\Phi_{\Lambda}|.
\end{equation}

It is then natural to conjecture the following result.
\begin{conj}
\label{conj:sumeigenfunctions}
There exist $r_0, C >0$ depending only on $M$ such that for every linear combination of Laplace eigenfunctions $\Phi_{\Lambda}$, that is satisfying \eqref{eq:deflinearcombination}, the following Bernstein estimates hold
\begin{equation}
\label{eq:LinftyBernsteinGeneralizationLinear}
\sup_{B_g\left(x,r\left(1+\frac{1}{\sqrt{\Lambda}}  \right)\right)} |\Phi| \leq C \sup_{B_g\left(x,r\right)} |\Phi|,
\end{equation}
and
\begin{equation}
\label{eq:LinftyBernsteingradientGeneralizationLinear}
\sup_{B_g\left(x,r\right)} |\nabla \Phi| \leq C \frac{\sqrt{\Lambda}}{r} \sup_{B_g\left(x,r\right)} |\Phi_|.
\end{equation}
\end{conj}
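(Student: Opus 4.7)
The natural plan is to extend the lifting trick \eqref{eq:liftingtrick} to sums of eigenfunctions and then apply the $A$-harmonic machinery of this paper. Define
\[
u(x,t) = \sum_{\lambda_k \leq \Lambda} a_k\, \varphi_{\lambda_k}(x)\, \cosh(\sqrt{\lambda_k}\, t), \qquad (x,t) \in M \times \R.
\]
Using \eqref{eq:Laplaceeingenfunction} one checks that $u$ is harmonic on the product manifold $\tilde M = M \times \R$ endowed with the product metric $\tilde g = g \oplus dt^2$, with $u(\cdot,0) = \Phi_\Lambda$ and $\partial_t u(\cdot,0) = 0$. The even Taylor expansion in $t$ reads $u(x,t) = \sum_{m \geq 0} \frac{t^{2m}}{(2m)!} (-\Delta_g)^m \Phi_\Lambda(x)$. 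By the argument of \cite[Proposition 2.4.1]{LM20} applied to \eqref{eq:doublingindexLaplaceeigenLinear}, the doubling index of $u$ in geodesic balls of $\tilde M$ centered at points of the slice $M \times \{0\}$ is bounded by $N := C\sqrt{\Lambda}$, up to a universal radius.

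\textbf{Wavelength-scale Bernstein.} Working in local coordinates on $\tilde M$ near a point $(x_0,0)$, the metric $\tilde g$ is represented by a smooth uniformly elliptic matrix $\tilde A$, and $u$ is $\tilde A$-harmonic. Applying \Cref{tm:mainresult2} with $N = C\sqrt{\Lambda}$ yields
\[
\sup_{B_{\tilde g}((x_0,0),\, r(1 + N^{-1}))} |u| \;\leq\; C \sup_{B_{\tilde g}((x_0,0),\, r)} |u|, \qquad r \in (0, r_0).
\]
In the wavelength range $r \lesssim 1/\sqrt{\Lambda}$, the Taylor expansion above combined with the pointwise bound $|(-\Delta_g)^m \Phi_\Lambda| \leq \Lambda^m \sup_M |\Phi_\Lambda|$ shows that $u(x,t)$ and $\Phi_\Lambda(x)$ are comparable up to a multiplicative constant on both sides of the display, which converts the estimate into \eqref{eq:LinftyBernsteinGeneralizationLinear} for $r \lesssim 1/\sqrt{\Lambda}$. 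As in \Cref{sec:proofgrowthestimates}, the gradient inequality \eqref{eq:LinftyBernsteingradientGeneralizationLinear} would then follow from the $L^\infty$ growth estimate by a scaled local elliptic regularity argument, once \eqref{eq:LinftyBernsteinGeneralizationLinear} is known at all scales.

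\textbf{Main obstacle: beyond the wavelength scale.} The hard part is to extend \eqref{eq:LinftyBernsteinGeneralizationLinear} to $r \in (r_0/\sqrt{\Lambda},\, r_0)$, since the Taylor transfer from $u$ to $\Phi_\Lambda$ breaks down once $t$ exceeds $1/\sqrt{\Lambda}$: the slices $\{t = \mathrm{const}\}$ of $u$ depart from $\Phi_\Lambda$ by factors as large as $e^{\sqrt{\Lambda} t}$, so a Bernstein bound on $u$ in a ball of radius $r \gg 1/\sqrt{\Lambda}$ does not translate into a Bernstein bound on $\Phi_\Lambda$. For a single eigenfunction, this exact obstacle was circumvented in the direct proof of \Cref{tm:mainresult1} by using a Carleman estimate adapted to the operator $-\Delta_g - \lambda$, i.e. by absorbing the zero-order coefficient $\lambda$ into the symmetric part of the conjugated operator (see \Cref{lm:CarlemanpuncturedManifold}). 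For a general $\Phi_\Lambda$ no single zero-order coefficient is available, so this path does not apply verbatim.

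\textbf{Possible routes.} Two strategies seem plausible. The first is to devise an anisotropic Carleman estimate on the product manifold $\tilde M$, with a weight that, in addition to the radial factor $\rho^{-1-2\alpha}$ of \eqref{eq:secondCarlemananifold}, is strongly localized near the slice $\{t = 0\}$; the five-step scheme of \Cref{sec:proofgrowthestimates} could then in principle be run on $\tilde M$ while keeping $\Phi_\Lambda = u|_{t = 0}$ as the dominant object on both sides of the Carleman inequality. The delicate point is to design such a weight that still produces a non-negative commutator with the Laplacian of $\tilde g$. The second route is to iterate the wavelength-scale estimate along a Harnack chain of $O(1)$ balls of radius $\sim r/\sqrt{\Lambda}$ covering the annulus $B_g(x, r(1 + 1/\sqrt{\Lambda})) \setminus B_g(x, r)$; here one would combine the wavelength-scale bound with the uniform doubling \eqref{eq:doublingindexLaplaceeigenLinear} to control the accumulated multiplicative loss. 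Either approach will require a genuinely new ingredient compared with the arguments developed in this paper.
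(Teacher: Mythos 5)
This statement is \Cref{conj:sumeigenfunctions}, which the paper presents explicitly as an \emph{open problem}: there is no proof of it in the text, only a short discussion of why it is difficult and a speculative suggestion of an approach. Your submission is therefore not a proof attempt that can be compared against a paper proof; rather, both you and the paper agree the result is unresolved, and both give heuristic discussions. Your discussion is consistent with the paper's in its diagnosis of the difficulty.

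On the comparison of the two discussions: you lift via $u(x,t)=\sum a_k\cosh(\sqrt{\lambda_k}\,t)\,\varphi_{\lambda_k}(x)$, which is $\tilde g$-harmonic on $M\times\R$ with $u|_{t=0}=\Phi_\Lambda$ (Dirichlet trace). The paper instead uses $\tilde\Phi(x,t)=\sum a_k\frac{\sinh(\sqrt{\lambda_k}\,t)}{\sqrt{\lambda_k}}\varphi_{\lambda_k}(x)$, also $\tilde g$-harmonic, but with $\tilde\Phi|_{t=0}=0$ and $\partial_t\tilde\Phi|_{t=0}=\Phi_\Lambda$ (Neumann trace). Both lifts are standard and recorded e.g.\ in \cite{JL99}. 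Because of the Neumann-trace structure, the paper's suggested route is a \emph{boundary} $L^2$-Carleman inequality in the spirit of \cite[Lemma 14.5]{JL99} to transfer a bound on $\tilde\Phi$ to a bound on $\Phi_\Lambda$; your first proposed route (an anisotropic Carleman weight on $\tilde M$ concentrated near the slice $\{t=0\}$) is a variant of the same idea. Both of you correctly identify the central obstruction the paper points out: $\Phi_\Lambda$ does not itself satisfy a second-order elliptic equation, so there is no single zero-order coefficient to absorb into the Carleman weight as in \Cref{lm:CarlemanpuncturedManifold}, and beyond wavelength scale the relationship between the lifted function and $\Phi_\Lambda$ degrades by factors of order $e^{\sqrt\Lambda\,t}$.

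One caveat on your ``wavelength-scale Bernstein'' paragraph as written: the Taylor bound $|(-\Delta_g)^m\Phi_\Lambda|\le\Lambda^m\sup_M|\Phi_\Lambda|$ is a \emph{global} estimate; it does not directly give $\sup_{B_{\tilde g}((x_0,0),r)}|u|\lesssim\sup_{B_g(x_0,r)}|\Phi_\Lambda|$ when the balls are localized at scale $r\le1/\sqrt\Lambda$, so the passage from \Cref{tm:mainresult2} applied to $u$ back to a Bernstein estimate for $\Phi_\Lambda$ is not as immediate as claimed and would need a further argument (e.g.\ the doubling bound \eqref{eq:doublingindexLaplaceeigenLinear} plus the three-ball type inequalities used in \cite{DM23}). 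But since the whole statement is a conjecture and you correctly flag the main difficulty beyond the wavelength scale, this is a secondary point. Your second ``possible route'' (a Harnack chain of $O(1)$ wavelength-scale balls) cannot succeed on its own: that chain only relates values of $\Phi_\Lambda$ over a wavelength-sized neighbourhood of the maximum point and does not by itself reach back to $\sup_{B(x,r)}|\Phi_\Lambda|$; a new quantitative ingredient would indeed be required, as you acknowledge.
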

The main difficulty for obtaining \Cref{conj:sumeigenfunctions} comes from the fact that $\Phi$ does not satisfy an elliptic equation. The standard trick to remove this difficulty consists in setting
\begin{equation}
\label{eq:tricklineareigen}
\tilde{\Phi}(x,t) = \sum_{\lambda_k \leq \Lambda} a_k \frac{\sinh(\sqrt{\lambda_k t})}{\sqrt{\lambda_k}} \varphi_{\lambda_k}\qquad (x,t) \in M \times \R,
\end{equation}
because $\Delta\tilde{\Phi}$ is harmonic in $M \times \R$, with respect to the metric $\tilde{g} = g \otimes \mathrm{dt}$. This transformation \eqref{eq:tricklineareigen} is crucially used for proving \eqref{eq:doublingindexLaplaceeigenLinear}. A first attempt for proving \eqref{eq:LinftyBernsteinGeneralizationLinear} would to consider a boundary $L^2$-Carleman-type inequality in the spirit of \cite[Lemma 14.5]{JL99}. Indeed, this boundary type estimate is useful for deducing an estimate of $\Phi$ from an estimate of $\tilde{\Phi}$. Note that even the $L^2$-version of \Cref{conj:sumeigenfunctions} is open.

\appendix

\section{Proof of the Carleman estimates}
\label{sec:proofCarleman}

In this part, we introduce standard notations inspired by Riemannian geometry. We do this because it simplifies the formulas appearing in the proof of the next lemmas. We set 
\begin{equation*}
\partial_i f = \frac{\partial f}{\partial x_i},\ \nabla_x f = (\partial_1 f, \dots, \partial_d f),\ \nabla f = A \nabla_x f,\
\mathrm{div}(\xi) = \sum_{i=1}^{d} \partial_i \xi_i\ \text{and}\ \Delta f = \mathrm{div}(\nabla f),
\end{equation*}

Let $A^{-1} = (a_{ij}(x))_{1 \leq i, j \leq d}$ the inverse matrix of coefficients of $A$.

For two vector fields $\xi$ and $\eta$, we have
\begin{equation}
\label{eq:scalarproductmetric}
\xi \cdot \eta = \sum_{i,j=1}^{d} a_{ij}(x) \xi_i \eta_j,\ |\xi|^2 = \xi \cdot \xi,
\end{equation}

With these notations at hand, when $f$, $h$ are smooth compactly supported functions, we have
\begin{equation*}
\mathrm{div}(A(x) \nabla f) = \Delta f,\ \Delta (f^2) = 2 f \Delta f + 2 |\nabla f|^2,\ \int f \Delta h dx = \int h \Delta f dx = - \int \nabla f \cdot \nabla h dx.
\end{equation*}

The proof of  \Cref{lm:Carleman} will follow the lines of the one of \cite[Theorem 2]{EV03}, by keeping in the left-hand-side the whole anti-symmetric term of the conjugated operator, see \Cref{lm:Carleman_app} for a precise formulation. Indeed, this term will then be exploited to give a more direct proof of \Cref{lm:Carlemanpunctured} than the one of \cite[Lemma A]{DF90} in the simple case $A=I_d$.

\subsection{The standard Carleman estimate}

The main result of this section is the following Carleman estimate, that leads to \Cref{lm:Carleman}.
\begin{lm}
\label{lm:Carleman_app}
There exists a positive constant $C=C(A)>0$, a radial increasing function $\rho=\rho(r)$ for $0 < r < 2$ satisfying
\begin{equation}
\label{eq:assumptionw}
C^{-1} \leq \frac{\rho(r)}{r} \leq C,\ C^{-1}  \leq |\partial_r \rho(r) | \leq C\ \qquad \forall r \in (0,2),
\end{equation}
such that for every $\alpha \geq C$, $f \in C_c^{\infty}(B_{2} \setminus \{0\})$, the following estimate holds
\begin{multline}
\label{eq:Carleman1_app}
\alpha^3 \int_{B_{2}} \rho^{-1-2 \alpha} |f|^2 dx + \alpha  \int_{B_{2}} \rho^{1- 2 \alpha} |\nabla_x f|^2 dx + \alpha^2 \int_{B_{2}} \frac{|\nabla \rho|^2}{\rho^2} |\mathcal A(g)|^2 dx \\
\leq C  \int_{B_{2}} \rho^{2-2 \alpha} |\Delta f|^2 dx,
\end{multline}
where $g = \rho^{-\alpha} f$ and
\begin{equation}
\label{eq:definitionAntisymmetric}
\mathcal A(g) = \frac{\rho \nabla \rho \cdot \nabla g}{|\nabla \rho|^2} + \frac{1}{2} F_{\rho}^A g,\qquad F_{\rho}^A = \frac{\rho \Delta \rho - |\nabla \rho|^2}{|\nabla \rho|^2},
\end{equation}
together with the bound
\begin{equation}
\label{eq:BoundFwA}
|F_{\rho}^A| \leq C.
\end{equation}
\end{lm}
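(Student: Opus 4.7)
The plan is to follow the classical conjugation / positive-commutator scheme of Hörmander, adapted to the divergence-form operator $\Delta = \mathrm{div}(A\nabla_x\cdot)$, with the feature that the whole $L^2$-norm of the antisymmetric part of the conjugated operator is retained on the left-hand side of \eqref{eq:Carleman1_app} (so that it can be exploited in \Cref{lm:Carlemanpunctured}). The first step is to conjugate: setting $g = w^{-\alpha}f$ and
$$P_\alpha g := w\bigl(w^{-\alpha}\Delta(w^\alpha g)\bigr) = w\Delta g + 2\alpha \nabla w \cdot \nabla g + \alpha(\alpha-1)\frac{|\nabla w|^2}{w}g + \alpha (\Delta w) g,$$
one has $\int_{B_2} w^{2-2\alpha}|\Delta f|^2\,dx = \int_{B_2}|P_\alpha g|^2\,dx$. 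Using that $\Delta$ is self-adjoint in $L^2(dx)$ (by symmetry of $A$), a direct integration-by-parts computation yields the decomposition $P_\alpha = S_\alpha + A_\alpha$ with
$$S_\alpha g = \mathrm{div}(wA\nabla_x g) + \Bigl[\alpha(\alpha-1)\tfrac{|\nabla w|^2}{w} + \tfrac12\Delta w\Bigr]g, \qquad A_\alpha g = (2\alpha-1)\Bigl[\nabla w\cdot\nabla g + \tfrac12 \Delta w \cdot g\Bigr],$$
and a short algebraic rearrangement gives $A_\alpha g = (2\alpha-1)\tfrac{|\nabla w|^2}{w}\bigl(\mathcal{A}(g)+\tfrac12 g\bigr)$, so that $\|A_\alpha g\|_{L^2(dx)}^2$ produces, up to absorbable cross and $|g|^2$ pieces, precisely the summand $\alpha^2\int \frac{|\nabla w|^2}{w^2}|\mathcal A(g)|^2\,dx$ of \eqref{eq:Carleman1_app}.

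The main identity is then
$$\int|P_\alpha g|^2\,dx = \int|S_\alpha g|^2\,dx + \int|A_\alpha g|^2\,dx + \int \langle [S_\alpha,A_\alpha]g, g\rangle\,dx,$$
and the heart of the proof is to extract the two remaining positive left-hand-side terms from the commutator $[S_\alpha,A_\alpha]$. The leading $\alpha^3$ contribution comes from the bracket between the zero-order part of $S_\alpha$ and the first-order part of $A_\alpha$,
$$\bigl[\alpha(\alpha-1)\tfrac{|\nabla w|^2}{w},\ (2\alpha-1)\nabla w\cdot\nabla\bigr] = -(2\alpha-1)\alpha(\alpha-1)\,\nabla w\cdot\nabla\Bigl(\tfrac{|\nabla w|^2}{w}\Bigr),$$
which pairs with $|g|^2$ and yields the $\alpha^3\int w^{-1-2\alpha}|f|^2\,dx$ term provided the radial profile $w(\rho)$ is chosen so that $-\nabla w\cdot\nabla(|\nabla w|^2/w) \gtrsim w^{-2}$ uniformly in $B_2\setminus\{0\}$. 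The gradient contribution comes from $[\mathrm{div}(wA\nabla_x\cdot),\ \nabla w\cdot\nabla]$, which after one integration by parts produces a Hessian-type bilinear form in $\nabla g$ bounded below by $c\,w|\nabla g|^2$ under the same convexity-type condition on $w$, and delivers the $\alpha\int w^{1-2\alpha}|\nabla_x f|^2\,dx$ term after translating back to $f$.

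The main obstacle is the explicit construction of the radial weight $w$ satisfying both positivity conditions above in the presence of the merely Lipschitz-regular matrix $A$: since $|\nabla \rho|^2$ and $\Delta \rho$ are now non-constant functions of $x$, the naive choice $w(\rho)=\rho$ does not suffice. Following the construction of \cite{EV03}, I would take $w(\rho) = \rho\, e^{-\int_0^\rho \tau^{-1}\phi(\tau)\,d\tau}$ for a small, smooth, nonnegative profile $\phi$ tuned so that the Hessian of $\log w$ in the $A$-metric dominates a positive multiple of the metric itself; for such $w$ the bounds in \eqref{eq:assumptionw} and $|F_w^A|\leq C$ are immediate from $|\nabla w|\asymp 1$, $|\Delta w|\leq C$ and $w\asymp \rho$, and the Lipschitz regularity of $A$ is exactly what is needed to absorb the $O(\alpha^2)$ and $O(\alpha)$ error terms arising from the non-constancy of $|\nabla \rho|_A$ and $\Delta_A\rho$ into the principal $\alpha^3$ and $\alpha$ terms, provided $\alpha\geq C(A)$. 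The proof is finalized by passing from $g$ back to $f$ via $\nabla g = w^{-\alpha}\nabla f - \alpha w^{-\alpha-1}f\nabla w$, whose cross term of order $\alpha^2 w^{-1-2\alpha}|\nabla w|^2|f|^2$ in $w|\nabla g|^2$ is, once again, absorbed by the leading $\alpha^3\int w^{-1-2\alpha}|f|^2$ piece for $\alpha$ large.
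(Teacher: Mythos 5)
Your skeleton is the right one and matches the approach of the paper and of \cite{EV03}: conjugate by $w^{-\alpha}$, split $P_\alpha = S_\alpha + A_\alpha$ into symmetric and anti-symmetric parts, keep $\|A_\alpha g\|^2$ on the left so it can be reused for \Cref{lm:Carlemanpunctured}, and extract the $\alpha^3$ and $\alpha$ terms from $\langle[S_\alpha,A_\alpha]g,g\rangle$. Your explicit computation of $S_\alpha$ and $A_\alpha$ is correct and is exactly the decomposition implicit in the paper's identity \eqref{eq:identityconjugaison} and its Step 1. The point where the sketch goes wrong is the positivity mechanism.

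First, the condition you impose on the weight --- that the Hessian of $\log w$ in the $A$-metric dominate a positive multiple of the metric --- is incompatible with \eqref{eq:assumptionw}. Since $w \asymp \rho$, one has $\log w = \log\rho + O(1)$, and $D^2\log\rho \asymp \rho^{-2}(I - 2\,e_\rho\otimes e_\rho)$, which is \emph{indefinite} (the radial eigenvalue is $-\rho^{-2}$). A perturbation $w(\rho)=\rho\,e^{-\int_0^\rho\tau^{-1}\phi}$ with $\phi$ small and nonnegative contributes $(\phi-1)/\rho^2 - \phi'/\rho$ to the radial second derivative of $\log w$, which stays negative; making it positive would force $\phi' \lesssim -1/\rho$, hence $\phi\to-\infty$ near $0$, contradicting nonnegativity. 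So ``strong pseudoconvexity of $\log w$'' is simply not available here, and your stated condition cannot be the one being used.

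Second, and as a consequence, the claim that $[\mathrm{div}(wA\nabla_x\cdot),\nabla w\cdot\nabla]$ ``produces a Hessian-type bilinear form in $\nabla g$ bounded below by $c\,w|\nabla g|^2$'' is false. The quadratic form that actually appears (the paper's $M_w^A$, see \eqref{eq:def1Mw}--\eqref{eq:def2Mw} and \eqref{eq:FwMwChoice}) is $M_w^A = \phi(\sigma)M_\sigma^A + \sigma\phi'(\sigma)\bigl(I - \tfrac{\nabla\sigma\otimes\nabla\sigma}{|\nabla\sigma|^2}\bigr)$: a tangential projection plus a Lipschitz-size error. It is only positive semi-definite and \emph{vanishes in the radial direction}. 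The radial component of $\nabla g$ cannot be obtained from the commutator; it is recovered from the $\alpha^2\int\frac{|\nabla w|^2}{w^2}|\mathcal A(g)|^2$ term, since $\mathcal A(g)$ contains $\nabla w\cdot\nabla g$. This interplay --- tangential gradient from the commutator, radial gradient from the anti-symmetric norm --- is precisely the reason the anti-symmetric term is retained on the left-hand side of \eqref{eq:Carleman1_app}, and your sketch misses it. Similarly, the $\alpha^3$ term is not delivered purely by $-\nabla w\cdot\nabla(|\nabla w|^2/w)$: the paper expands $F_w^A\Delta(g^2)$ and the choice $\phi(s)=e^{\mu s}$ with $\mu$ large (hence the Carleman parameter lower bound $\alpha \geq C(A)$) is needed to make the combination of $\sigma\phi'$, $B\phi$ and $\Delta\phi$ terms positive in the presence of the Lipschitz errors, cf.\ \eqref{eq:EstimateStep2}--\eqref{eq:EstimateStep2Bis}. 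You should redo the commutator computation keeping track of the degeneracy in the radial direction and of the $F_w^A$ corrections; the decomposition itself is fine, but the positivity argument as written would not close.
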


\begin{proof}
We follow \cite[Theorem 2]{EV03}, sometimes line by line.

Let $g = \rho^{-\alpha} f$ and we compute
\begin{equation}
\label{eq:identityconjugaison}
\rho^{-\alpha} \Delta f = \Delta g + \frac{\alpha^2 |\nabla \rho|^2}{\rho^2} g+ 2 \alpha \frac{|\nabla \rho|^2}{\rho^2} \mathcal{A}(g),
\end{equation}
where the definition of $\mathcal{A}(g)$ is recalled in \eqref{eq:definitionAntisymmetric}. We also set $M_{\rho}^A$ the $d \times d$ symmetric matrix
\begin{equation}
\label{eq:def1Mw}
M_{\rho}^A = \frac{1}{2} (M_{ij} + M_{ji})_{1 \leq i,j \leq d},
\end{equation}
where, using the summation notation of repeated indices,
\begin{equation}
\label{eq:def2Mw}
M_{ij}  =\frac{1}{2} \mathrm{div} \left(\frac{\rho \nabla \rho}{|\nabla \rho|^2}\right) \delta_{ij} - \partial_{x_j} \left( \frac{\rho a^{ik} \partial_{x_k} \rho}{|\nabla \rho|^2}\right) + \frac{1}{2} a_{ik} \frac{\rho a^{kl} \partial_l \rho}{|\nabla \rho|^2} \partial_k a^{hi} - \frac{1}{2} F_{\rho}^A \delta_{ij}.
\end{equation}

We split the proof in several steps.\medskip

\textbf{Step 1: A first identity.} The goal of this step is to prove that
\begin{equation}
\label{eq:goalstep1}
\int \frac{\rho^2}{|\nabla \rho|^2} (\rho^{- \alpha} \Delta f)^2 \geq 4 \alpha \int M_{\rho}^A  \nabla g \cdot \nabla g + \alpha \int F_{\rho}^A \Delta (g^2) + 4 \alpha^2 \int \frac{|\nabla \rho|^2}{\rho^2} \mathcal A(g)^2.
\end{equation}

Let
\begin{equation}
\label{eq:Step11}
J_1 = \int \left(2 \alpha \frac{|\nabla \rho|}{\rho} \mathcal A(g)\right)^2,\ J_2 = 2 \int \left[ 2 \alpha \mathcal A(g) \left( \Delta g + \alpha^2 \frac{|\nabla \rho|^2}{\rho^2} g  \right) \right].
\end{equation}
Then, we have
\begin{equation}
\label{eq:Step12}
\int \frac{\rho^2}{|\nabla \rho|^2} (\rho^{-\alpha} \Delta f)^2 \geq J_1 + J_2.
\end{equation}
First note that
\begin{equation}
\label{eq:Step13}
\int \frac{ |\nabla \rho|^2}{\rho^2} \mathcal A(g) \cdot g = 0,
\end{equation}
so that from \eqref{eq:definitionAntisymmetric} and the identity $\Delta (g^2) = 2 g \Delta g + 2 |\nabla g|^2$,
\begin{equation}
\label{eq:J2int}
J_2 = 4 \alpha \int \mathcal A(g) \Delta g  = 2 \alpha \int \left( \frac{2 \rho \nabla \rho \cdot \nabla g}{|\nabla \rho|^2} \Delta g - F_{\rho}^A  |\nabla g|^2 \right) + \alpha \int F_{\rho} \Delta (g^2).
\end{equation}
We now have the following identity
\begin{multline*}
2 ( \beta \cdot \nabla g) \Delta g = 2 \mathrm{div} ((\beta \cdot \nabla g) \nabla g) - \mathrm{div}(\beta |\nabla g|^2) + \mathrm{div}(\beta) |\nabla g|^2 \\
- 2 \partial_i \beta^k a^{ij} \partial_j g \partial_k g + \beta^k \partial_k a^{ij} \partial_i g \partial_j g,\qquad \forall \beta \in C^{\infty}(B_2 \setminus \{0\} ; \R^d),
\end{multline*}
and we choose
\begin{equation*}
\beta = \frac{\rho \nabla \rho}{|\nabla \rho|^2},
\end{equation*}
to get from \eqref{eq:J2int} and the divergence theorem
\begin{equation}
\label{eq:Step14}
4 \alpha \int \mathcal A(g) \Delta g = 4 \alpha \int M_{\rho}^A  \nabla g \cdot \nabla g + \alpha \int F_{\rho}^A  \Delta (g^2),
\end{equation}
where $M_{\rho}^A $ is defined in \eqref{eq:def1Mw} and \eqref{eq:def2Mw}. By gathering
\eqref{eq:Step11}, \eqref{eq:Step12}, \eqref{eq:Step13} and \eqref{eq:Step14} we obtain \eqref{eq:goalstep1} so the conclusion of Step 1.\\
\medskip

\textbf{Step 2: Choice of $\rho$.} For $\mu \geq 1$, a parameter to be chosen later, let
\begin{equation}
\label{eq:defsigmavarphiphi}
\sigma(x) = \left(\sum_{i,j=1}^{d} a_{ij}(0) x_i x_j\right)^{1/2},\ \varphi(s) = s \exp\left( \int_0^s \frac{e^{-\mu t}-1}{t} dt\right),\ \phi(s) = \frac{\varphi(s)}{s \varphi'(s)} = e^{\mu s}.
\end{equation}
We define
\begin{equation*}
\rho(x) = \varphi(\sigma(x)).
\end{equation*}
With this definition, we can now compute
\begin{equation}
\label{eq:Msigmanablasigma}
M_{\sigma}^A \nabla \sigma = 0,
\end{equation}
\begin{equation}
\label{eq:FwMwChoice}
F_{\rho}^A = F_{\sigma}^A \phi(\sigma) - \sigma \phi'(\sigma),\ M_{\rho}^A = \phi(\sigma) M_{\sigma}^A +  \sigma \phi'(\sigma) \left( I- \frac{\nabla \sigma \otimes \nabla \sigma}{|\nabla \sigma|^2} \right),
\end{equation}
and
\begin{equation}
\label{eq:FMsigma0}
F_{\sigma}^{A(0)} = n-2,\ M_{\sigma}^{A(0)} = 0.
\end{equation}
Notice that the following properties hold
\begin{equation*}
\varphi'(r) > 0,\ cr \leq \varphi(r) \leq Cr,
\end{equation*}
so we have
\begin{equation*}
c \sigma \leq \rho(x) \leq C \sigma,\ c  \leq |\nabla \rho(x) | \leq C,\ |\nabla |\nabla \rho|^2| \leq C,\ |\Delta \phi | \leq C \rho^{-1},\ |F_{\rho}| \leq C.
\end{equation*}

Now we estimate the first two terms appearing in the right hand side of \eqref{eq:goalstep1}.

Let us treat the first term. From the second part of \eqref{eq:FwMwChoice}, we have
\begin{equation*}
M_{\rho}^A \nabla g \cdot \nabla g = \sigma \phi' \left(|\nabla g|^2 - \frac{(\nabla \sigma \cdot \nabla g)^2}{|\nabla \sigma|^2} \right) ,
\end{equation*}
and denoting by $\tilde{\nabla} g$ the tangential components of the gradient of $g$ along the level sets of $\sigma(x)$ with respect to the metric $\sum_{i,j=1}^{d} a_{ij}(x) dx_i dx_j$, we have
\begin{equation}
\label{eq:defnablatilde}
\tilde{\nabla} g = \nabla g - \frac{\nabla \sigma \cdot \nabla g}{|\nabla \sigma|^2} \nabla \sigma =  \nabla g - \frac{\nabla \rho \cdot \nabla g}{|\nabla \rho|^2} \nabla \rho.
\end{equation}
From \eqref{eq:Msigmanablasigma},  \eqref{eq:FMsigma0} and \eqref{eq:defnablatilde}, we have that
\begin{equation*}
M_{\sigma}^{A} \nabla g \cdot \nabla g = (M_{\sigma}^A - M_{\sigma}^{A(0)}) \tilde{\nabla} g \cdot \tilde{\nabla} g .
\end{equation*}
On the other hand, a computation and the Lipschitz condition on the matrix $A$ give that there exists $C>0$ depending on on $d$ and $\Lambda_1, \Lambda_2$ such that
\begin{equation*}
| M_{\sigma}^{A} - M_{\sigma}^{A(0)}| \leq C \sigma,
\end{equation*}
so that
\begin{equation}
\label{eq:FirsttermStep2}
\int M_{\sigma}^{A} \nabla g \cdot \nabla g \geq \int \sigma (\phi' - C \phi) |\tilde{\nabla} g|^2
\end{equation}

Now we estimate from below the second term appearing in \eqref{eq:goalstep1}. We observe that
\begin{equation*}
F_{\rho}^{A} = (d-2) \phi + (B \phi - \sigma \phi'),
\end{equation*}
where $B = F_{\sigma}^A - F_{\sigma}^{A(0)}$ that satisfies by using the Lipschitz condition on the matrix $A$,
\begin{equation*}
|B(x)| \leq C \sigma.
\end{equation*}
So we have from the identities
\begin{equation*}
\Delta g^2 = 2 g \Delta g + 2 |\nabla g|^2,\ |\nabla g|^2 = |\tilde{\nabla} g|^2 + \frac{(\nabla \rho \cdot \nabla g)^2}{|\nabla \rho|^2},
\end{equation*}
that
\begin{align}
\label{eq:SecondtermStep2}
\int F_{\rho}^A \Delta (g^2) &= (d-2) \int (\Delta \phi) g^2  + 2 \int (B \phi - \sigma \phi') g \Delta g \notag \\
&\quad + 2 \int (B \phi - \sigma \phi') |\tilde{\nabla} g|^2 + 2 \int  (B \phi - \sigma \phi') \frac{(\nabla \sigma \cdot \nabla g)^2}{|\nabla \sigma|^2}.
\end{align}
From \eqref{eq:identityconjugaison}, we then have that the second term of \eqref{eq:SecondtermStep2} writes as follows
\begin{multline}
\label{eq:SecondtermSecondStep2}
2 \int (B \phi - \sigma \phi') g \Delta g =  2 \int (B \phi - \sigma \phi') g \Delta f \rho^{-\alpha} \\
+ 2 \alpha^2 \int (\sigma \phi' - B \phi) \frac{|\nabla \rho|^2}{\rho^2} g^2 + 2 \int (\sigma \phi' - B \phi)  2 \alpha \frac{|\nabla \rho|^2}{\rho^2}  g \mathcal A(g) .
\end{multline}
Thus, from \eqref{eq:FirsttermStep2}, \eqref{eq:SecondtermStep2} and \eqref{eq:SecondtermSecondStep2} we have for $\alpha \geq 1$,
\begin{multline}
\label{eq:EstimateStep2}
4 \alpha \int M_{\rho}^A \nabla g \cdot \nabla g + \alpha \int F_{\rho} \Delta (g^2) \\
\geq 2 \alpha \int (\sigma \phi' - 2 C \sigma \phi + B \phi) |\tilde{\nabla} g|^2 + 2
\alpha^3 \int( \sigma \phi' - B \phi) \frac{|\nabla \rho|^2}{\rho^2} g^2 - R_1,
\end{multline}
where for some constant $C>0$ depending on $d$, $\Lambda_1, \Lambda_2$ and $\mu$,
\begin{equation}
\label{eq:EstimateStep2R1}
R_1 \leq C \left( \alpha \int \rho^{-1} g^2 + \alpha \int \rho^{1- \alpha} |g| |\Delta g| + \alpha^2 \int \rho^{-1} |\mathcal A(g)| |g| + \alpha \int \rho \frac{ |\nabla \sigma \cdot \nabla g|^2}{|\nabla \sigma|^2} \right).
\end{equation}
By choosing $\mu = C(d,\Lambda_1,\Lambda_2) \geq 1$ sufficiently large, we then get that for some $c=c(d,\Lambda_1,\Lambda_2,\mu)>0$  by using \eqref{eq:EstimateStep2},
\begin{equation}
\label{eq:EstimateStep2Bis}
4 \alpha \int M_{\rho}^A \nabla g \cdot \nabla g + \alpha \int F_{\rho} \Delta (g^2) \\
\geq c \left( \alpha \int \sigma |\tilde{\nabla} g|^2 + 2
\alpha^3 \int \sigma \frac{|\nabla \rho|^2}{\rho^2} g^2 \right) - R_1
\end{equation}
Once we combine \eqref{eq:goalstep1}, \eqref{eq:EstimateStep2Bis} and \eqref{eq:EstimateStep2R1}, we obtain the following
\begin{multline}
\label{eq:endStep2}
4 \alpha^2 \int \frac{|\nabla \rho|^2}{\rho^2} A(g)^2 + 2 \alpha \int \sigma \phi' |\tilde{\nabla} g|^2 + 2 \alpha^3 \int \sigma \phi' \frac{|\nabla \rho|^2}{\rho^2} g^2 \leq \int \frac{\rho^2}{|\nabla \rho|^2} ( \rho^{-\alpha} \Delta f)^2  \\
+ C \left( \alpha \int \rho^{-1} g^2 + \alpha \int \rho^{1- \alpha} |g| |\Delta g| + \alpha^2 \int \rho^{-1} |\mathcal A(g)| |g| + \alpha \int \rho \frac{ |\nabla \sigma \cdot \nabla g|^2}{|\nabla \sigma|^2} \right).
\end{multline}
This ends Step 2.\\
\medskip

\textbf{Step 3: Absorption.} To conclude the proof, recall the form of $\mathcal A(g)$ and then
\begin{equation}
\label{eq:BoundFwAProof}
|F_{\rho}^A| \leq C.
\end{equation}
Thus
\begin{equation}
\frac{\rho}{|\nabla \rho|^2} \frac{|\nabla \rho \cdot \nabla g|^2}{|\nabla \rho|^2} \leq C \rho^{-1} |\mathcal A (g) |^2 + C \frac{|g|^2}{\rho},
\end{equation}
so that
\begin{multline}
\label{eq:absorption1}
\alpha \int \rho \frac{ |\nabla \sigma \cdot \nabla g|^2}{|\nabla \sigma|^2} = \alpha \int \rho \frac{ |\nabla \rho \cdot \nabla g|^2} {|\nabla \sigma|^2} \leq C \alpha \int \rho^{-1} |\mathcal A (g)|^2 + C \alpha \int \rho^{-1} |g|^2 \\
\leq C \alpha \int \frac{|\nabla \rho|^2}{\rho} |\mathcal A  (g)|^2 + C \alpha \int \rho^{-1} |g|^2.
\end{multline}
Also,
\begin{equation}
\label{eq:absorption2}
C \alpha^2 \int \rho^{-1} |\mathcal A (g)| |g| \leq \frac{c}{2} \alpha^2 \int \frac{|\nabla \rho|^2}{\rho} |\mathcal A(g)|^2 + C \alpha^2 \int \rho^{-1} g^2,
\end{equation}
and
\begin{equation}
\label{eq:absorption3}
\alpha \int \rho^{(1- \alpha)} |g| |\Delta f| \leq \int \rho^{2} (\rho ^{-\alpha} |\Delta f|)^2 + C \alpha^2 \int  \rho^{-1} g^2.
\end{equation}
From \eqref{eq:endStep2}, \eqref{eq:absorption1}, \eqref{eq:absorption2} and \eqref{eq:absorption3}, conjugated with
$$  \alpha^2 \int \frac{|\nabla \rho|^2}{\rho} |A(g)|^2 \geq c \alpha^2 \int \frac{|\nabla \rho|^2}{\rho} |\mathcal A(g)|^2,$$
we obtain the expected inequality \eqref{eq:Carleman1_app} and the bound \eqref{eq:BoundFwA} comes from \eqref{eq:BoundFwAProof}. This concludes the proof.
\end{proof}

%It is interesting to note that in the proof of \Cref{lm:Carleman_app}, we do not have used that $A(0) = I_d$. We only use this assumption in the proof of \Cref{lm:Carlemanpunctured}, see below.

\subsection{The $L^2$-Carleman estimate in a punctured domain for the flat Laplacian}

The goal of this part consists in a simple proof of \Cref{lm:Carlemanpunctured} when $A=I_d$. As said previously, the proof of \Cref{lm:Carlemanpunctured} in the general case can be obtained by working with geodesic polar coordinates adapted to the metric given by $A$, see \cite{AKS62} and \cite{Rul18}, then following \cite{DF90}. The strategy that we employed here is a little bit different from the one of \cite[Lemma A]{DF90}. We actually exploit the square of the whole anti-symmetric term of the conjugated operator in the left hand side of the standard Carleman estimate.

\begin{proof}
We start from \eqref{eq:Carleman1} to obtain first
\begin{equation*}
 \alpha^2 \int_{B(0,r(1+2^{-1}\varepsilon))} \frac{|\nabla \rho|^2}{\rho^2} |\mathcal A(g)|^2 dx
\leq C  \int_{B_{2}} \rho^{2-2 \alpha} |\Delta f|^2 dx.
\end{equation*}
By using \eqref{eq:assumptionw} and the assumption on $f$ that vanishes in $B(0,r)$, this translates into
\begin{equation*}
 \alpha^2 r^{-2} \int_{B(0,r(1+2^{-1}\varepsilon))} |\mathcal A(g)|^2 dx
\leq C  \int_{B_{2}} \rho^{2-2 \alpha} |\Delta f|^2 dx.
\end{equation*}

Then we develop $|\mathcal A(g)|^2$ by \eqref{eq:definitionAntisymmetric} to get that
\begin{equation*}
 \alpha^2 r^{-2} \int_{B(0,r(1+2^{-1}\varepsilon))} \left(\frac{\rho \nabla \rho \cdot \nabla g}{|\nabla \rho|^2}\right)^2  \leq C  \int_{B_{2}} \rho^{2-2 \alpha} |\Delta f|^2 dx + C \alpha^2  r^{-2} \int_{B(0,r(1+2^{-1}\varepsilon))} |F_{\rho}^A g|^2.
\end{equation*}
By using that $A=I_d$, the definition of $\sigma$ in \eqref{eq:defsigmavarphiphi}, the definition of the scalar product between two vector fields in \eqref{eq:scalarproductmetric} we have that
\begin{equation*}
\nabla \rho \cdot \nabla g =  \partial_{\rho}  g.
\end{equation*}
We then deduce from the two previous estimates that
\begin{equation*}
\alpha^2 \int_{B(0,r(1+2^{-1}\varepsilon))} |\partial_{\rho} g|^2 
\leq C  \int_{B_{2}} \rho^{2-2 \alpha} |\Delta f|^2 dx + C \alpha^2  r^{-2} \int_{B(0,r(1+2^{-1}\varepsilon))} |F_{\rho}^A g|^2 
\end{equation*}
By integrating along a radial line and by using that $g$ vanishes in $B(0,r)$, we obtain that
\begin{equation*}
\alpha^2 \varepsilon^{-2} r^{-2} \int_{B(0,r(1+2\varepsilon))} | g|^2 \leq C  \alpha^2   \int_{B(0,r(1+2^{-1}\varepsilon))} |\partial_\rho g|^2,
\end{equation*}
so from the two previous estimates
\begin{equation*}
\alpha^2 \varepsilon^{-2}  r^{-2} \int_{B(0,r(1+2\varepsilon))} | g|^2 
 \leq C  \int_{B_{2}} \rho^{2-2 \alpha} |\Delta f|^2 dx + C \alpha^2  r^{-2} \int_{B(0,r(1+2^{-1}\varepsilon))} |F_{\rho}^A g|^2
\end{equation*}
Then, for $\alpha \geq C$, using \eqref{eq:BoundFwA} one can absorb the second right hand side term to obtain
\begin{equation*}
\alpha^2 \varepsilon^{-2} r^{-2} \int_{B(0,r(1+2\varepsilon))} | g|^2 \leq C  \int_{B_{2}} \rho^{2-2 \alpha} |\Delta f|^2 dx.
\end{equation*}
We now come back to the variable $f$ to deduce
\begin{equation}
\label{eq:CarlemanProof7}
\alpha^2 \varepsilon^{-2} r^{-2} \int_{B(0,r(1+2\varepsilon))} |f|^2 \rho^{-2 \alpha}
  \leq C  \int_{B_{2}} \rho^{2-2 \alpha} |\Delta f|^2 dx  .
\end{equation}
By gathering \eqref{eq:CarlemanProof7} with a Cacciopoli type estimate we obtain  the expected result \eqref{eq:secondCarleman}.
\end{proof}

\subsection{The modified version of the standard Carleman estimate}

We have the following Carleman estimate whose proof is an easy adaptation of the one of \Cref{lm:Carleman_app}.
\begin{lm}
\label{lm:Carleman_appModif}
There exists a positive constant $C=C(A)>0$ such that for every $x_0 \in B(0,1/4)$, there exists an increasing function $\rho$ such that
\begin{equation*}
\label{eq:assumptionwModif}
C^{-1} \leq \frac{\rho(|x-x_0|)}{|x-x_0|} \leq C,
\end{equation*}
such that for every $\alpha \geq C$, $f \in C_c^{\infty}(B_{2} \setminus \{x_0\})$, the following estimate holds
\begin{equation*}
\label{eq:Carleman1_appModif}
\alpha^3 \int_{B_{2}} \rho^{-1-2 \alpha} |f|^2 dx + \alpha  \int_{B_{2}} \rho^{1- 2 \alpha} |\nabla_x f|^2 dx
\leq C  \int_{B_{2}} \rho^{2-2 \alpha} |\Delta f|^2 dx.
\end{equation*}
\end{lm}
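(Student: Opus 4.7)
The plan is to mimic the proof of Lemma \ref{lm:Carleman_app} almost verbatim, with the single modification that the weight is now centered at $x_0$ rather than at the origin. Concretely, I would introduce the shifted Riemannian-type distance
\begin{equation*}
\sigma_{x_0}(x) = \Bigl(\sum_{i,j=1}^{d} a_{ij}(x_0)\,(x_i - x_{0,i})(x_j - x_{0,j})\Bigr)^{1/2},
\end{equation*}
the same one-variable profile $\varphi$ defined by \eqref{eq:defsigmavarphiphi}, and set $w(x) = \varphi(\sigma_{x_0}(x))$. The equivalence $C^{-1}|x-x_0| \leq w(x) \leq C|x-x_0|$ with $c \leq |\partial_\rho w| \leq C$ follows from uniform ellipticity of $A(x_0)$ and from $c\, r \leq \varphi(r) \leq C\, r$, with constants depending only on $\Lambda_1$ and uniform in $x_0 \in B(0,1/4)$.

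Next I would rerun Steps 1--3 of the proof of Lemma \ref{lm:Carleman_app} line by line, substituting $\sigma$ and $w$ by their shifted versions. Step 1 (the identity \eqref{eq:goalstep1}) is purely algebraic and is unaffected by translation. For Step 2, the two structural identities
\begin{equation*}
F_{\sigma_{x_0}}^{A(x_0)} = d-2, \qquad M_{\sigma_{x_0}}^{A(x_0)} = 0
\end{equation*}
still hold because $A(x_0)$ is constant and positive definite, so $\sigma_{x_0}$ is, after a linear change of variables, simply the Euclidean norm centered at $x_0$. The remainder term in the expansion of $F_{\sigma_{x_0}}^{A}$ and $M_{\sigma_{x_0}}^{A}$ around the point $x_0$ is controlled by
\begin{equation*}
|M_{\sigma_{x_0}}^{A}(x) - M_{\sigma_{x_0}}^{A(x_0)}| + |F_{\sigma_{x_0}}^{A}(x) - F_{\sigma_{x_0}}^{A(x_0)}| \leq C\,|x - x_0| \leq C\,\sigma_{x_0}(x),
\end{equation*}
exactly as in the unshifted case, via the Lipschitz bound \eqref{eq:LipschitzAB2}. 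The constant $C$ depends only on $\Lambda_1, \Lambda_2, d$ and in particular not on $x_0$, which is crucial for uniformity. Step 3 (the absorption argument) is identical, since it relies only on the pointwise bound $|F_w^A| \leq C$, which we again obtain from the smoothness of $\phi$ and the Lipschitz condition on $A$.

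The main point to check carefully is that integration is performed over $B_2$ rather than a ball centered at $x_0$. Since $x_0 \in B(0,1/4)$, we have $B_2 \subset B(x_0, 9/4)$ and $B_2 \supset B(x_0, 7/4)$, so all the pointwise inequalities involving $\sigma_{x_0}$ remain valid on $B_2$ (the radius of the domain enters only through the smallness of $\sigma_{x_0}$ needed to absorb the perturbation terms, and choosing $\mu = \mu(d,\Lambda_1,\Lambda_2)$ large enough handles this in any ball of radius $O(1)$). Because $f$ is compactly supported in $B_2 \setminus \{x_0\}$ with no assumption on how close the support comes to $x_0$, no boundary terms are created and the divergence-theorem manipulations in Step 1 remain valid. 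The minor obstacle, if any, is only bookkeeping: verifying that every constant produced along the way depends on $A$ and $d$ but is uniform in $x_0 \in B(0,1/4)$. This follows from the fact that all appearances of $A(0)$ in the original argument can be replaced by $A(x_0)$, and the Lipschitz bound \eqref{eq:LipschitzAB2} is uniform in the base point. Consequently the same final estimate \eqref{eq:Carleman1_app} (with the additional anti-symmetric term, which we discard here) yields Lemma \ref{lm:Carleman_appModif}.
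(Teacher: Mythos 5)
Your proposal is correct and coincides with the paper's own (very terse) proof: both set $\sigma(x)=\bigl(\sum_{i,j}a_{ij}(x_0)(x_i-x_{0,i})(x_j-x_{0,j})\bigr)^{1/2}$, take $w=\varphi(\sigma)$, and rerun the argument of Lemma~\ref{lm:Carleman_app}. You simply spell out the verifications (the identities $F_{\sigma}^{A(x_0)}=d-2$, $M_{\sigma}^{A(x_0)}=0$, the Lipschitz remainder, and uniformity in $x_0\in B(0,1/4)$) that the paper leaves implicit.
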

\begin{proof}
The proof exactly follows the lines of \Cref{lm:Carleman_app}. Only the choice of $\rho$, coming from \eqref{eq:defsigmavarphiphi}, is different. We instead set
$$ \sigma(x) = \left(\sum_{i,j=1}^{d} a_{ij}(x_0) (x_i-x_{0,i}) (x_j-x_{0,j})\right)^{1/2}.$$
Then the next estimates are established in function of $\rho$ that behaves as $|x-x_0|$.
\end{proof}

\bibliographystyle{alpha}
\small{\bibliography{Bernstein}}

@article {AKS62,
    AUTHOR = {Aronszajn, Nachman and Krzywicki, Andrzej and Szarski, Jacek},
     TITLE = {A unique continuation theorem for exterior differential forms
              on {R}iemannian manifolds},
   JOURNAL = {Ark. Mat.},
  FJOURNAL = {Arkiv f\"or Matematik},
    VOLUME = {4},
      YEAR = {1962},
     PAGES = {417--453},
      ISSN = {0004-2080,1871-2487},
   MRCLASS = {53.45},
  MRNUMBER = {140031},
MRREVIEWER = {H.\ H.\ Johnson},
       DOI = {10.1007/BF02591624},
       URL = {https://doi.org/10.1007/BF02591624},
}

@article {Rul18,
    AUTHOR = {Rüland, Angkana},
     TITLE = {Unique continuation for sublinear elliptic equations based on
              {C}arleman estimates},
   JOURNAL = {J. Differential Equations},
  FJOURNAL = {Journal of Differential Equations},
    VOLUME = {265},
      YEAR = {2018},
    NUMBER = {11},
     PAGES = {6009--6035},
      ISSN = {0022-0396,1090-2732},
   MRCLASS = {35J61 (35B60)},
  MRNUMBER = {3857504},
MRREVIEWER = {Pavel\ I.\ Naumkin},
       DOI = {10.1016/j.jde.2018.07.025},
       URL = {https://doi.org/10.1016/j.jde.2018.07.025},
}

@misc{DMN-Personal,
	Author = {Stefano Decio and Eugenia Malinnikova and Fedor Nazarov},
	Title = {Personal communication},
	Year = {2025}}

@book {HL11,
    AUTHOR = {Han, Qing and Lin, Fanghua},
     TITLE = {Elliptic partial differential equations},
    SERIES = {Courant Lecture Notes in Mathematics},
    VOLUME = {1},
   EDITION = {Second},
 PUBLISHER = {Courant Institute of Mathematical Sciences, New York; American
              Mathematical Society, Providence, RI},
      YEAR = {2011},
     PAGES = {x+147},
      ISBN = {978-0-8218-5313-9},
   MRCLASS = {35Jxx (35-01 35B50)},
  MRNUMBER = {2777537},
}

@article {FK24,
    AUTHOR = {Filonov, Nikolai D. and Krymskii, Stanislav T.},
     TITLE = {On the {L}andis conjecture in a cylinder},
   JOURNAL = {Russ. J. Math. Phys.},
  FJOURNAL = {Russian Journal of Mathematical Physics},
    VOLUME = {31},
      YEAR = {2024},
    NUMBER = {4},
     PAGES = {645--665},
      ISSN = {1061-9208,1555-6638},
   MRCLASS = {35J15},
  MRNUMBER = {4861862},
MRREVIEWER = {Hardy\ Chan},
       DOI = {10.1134/S1061920824040058},
       URL = {https://doi.org/10.1134/S1061920824040058},
}

@article{KL88,
	Author = {Kondratev, Vladimir A. and Landis, Evgeni M},
	Date-Modified = {2024-01-25 06:29:30 +0000},
	Fjournal = {Matematicheski\u{\i} Sbornik. Novaya Seriya},
	Issn = {0368-8666},
	Journal = {Mat. Sb. (N.S.)},
	Mrclass = {35J65 (35J60)},
	Mrnumber = {937645},
	Mrreviewer = {Russell C. Thompson},
	Number = {3},
	Pages = {346--360, 415},
	Title = {Qualitative properties of the solutions of a second-order nonlinear equation},
	Volume = {135(177)},
	Year = {1988}}

@article {Dav14,
    AUTHOR = {Davey, Blair},
     TITLE = {Some quantitative unique continuation results for
              eigenfunctions of the magnetic {S}chr\"odinger operator},
   JOURNAL = {Comm. Partial Differential Equations},
  FJOURNAL = {Communications in Partial Differential Equations},
    VOLUME = {39},
      YEAR = {2014},
    NUMBER = {5},
     PAGES = {876--945},
      ISSN = {0360-5302,1532-4133},
   MRCLASS = {35J10 (35B60 35J15)},
  MRNUMBER = {3196190},
MRREVIEWER = {Ingrid\ Alma\ Belti\c t\u a},
       DOI = {10.1080/03605302.2013.796380},
       URL = {https://doi.org/10.1080/03605302.2013.796380},
}

@article{Mes91,
	Author = {Meshkov, Viktor Z.},
	Doi = {10.1070/SM1992v072n02ABEH001414},
	Fjournal = {Matematicheski\u{\i} Sbornik},
	Issn = {0368-8666},
	Journal = {Mat. Sb.},
	Mrclass = {35B05},
	Mrnumber = {1110071},
	Mrreviewer = {L. M. Kuks},
	Number = {3},
	Pages = {364--383},
	Title = {On the possible rate of decrease at infinity of the solutions of second-order partial differential equations},
	Url = {https://doi.org/10.1070/SM1992v072n02ABEH001414},
	Volume = {182},
	Year = {1991},
	Bdsk-Url-1 = {https://doi.org/10.1070/SM1992v072n02ABEH001414}}

@article {Lin91,
    AUTHOR = {Lin, Fang-Hua},
     TITLE = {Nodal sets of solutions of elliptic and parabolic equations},
   JOURNAL = {Comm. Pure Appl. Math.},
  FJOURNAL = {Communications on Pure and Applied Mathematics},
    VOLUME = {44},
      YEAR = {1991},
    NUMBER = {3},
     PAGES = {287--308},
      ISSN = {0010-3640,1097-0312},
   MRCLASS = {58G11 (35J05 35K05 58G03)},
  MRNUMBER = {1090434},
MRREVIEWER = {Robert\ McOwen},
       DOI = {10.1002/cpa.3160440303},
       URL = {https://doi.org/10.1002/cpa.3160440303},
}

@incollection {LM20,
    AUTHOR = {Logunov, Alexander and Malinnikova, Eugenia},
     TITLE = {Lecture notes on quantitative unique continuation for
              solutions of second order elliptic equations},
 BOOKTITLE = {Harmonic analysis and applications},
    SERIES = {IAS/Park City Math. Ser.},
    VOLUME = {27},
     PAGES = {1--33},
 PUBLISHER = {Amer. Math. Soc., [Providence], RI},
      YEAR = {[2020] \copyright2020},
      ISBN = {978-1-4704-6127-0},
   MRCLASS = {35J15 (31B05)},
  MRNUMBER = {4249624},
}

@incollection {DF90b,
    AUTHOR = {Donnelly, Harold and Fefferman, Charles},
     TITLE = {Nodal sets of eigenfunctions: {R}iemannian manifolds with
              boundary},
 BOOKTITLE = {Analysis, et cetera},
     PAGES = {251--262},
 PUBLISHER = {Academic Press, Boston, MA},
      YEAR = {1990},
      ISBN = {0-12-574249-5},
   MRCLASS = {58G25 (35P05)},
  MRNUMBER = {1039348},
MRREVIEWER = {M.\ Craioveanu},
}

@incollection {JL99,
    AUTHOR = {Jerison, David and Lebeau, Gilles},
     TITLE = {Nodal sets of sums of eigenfunctions},
 BOOKTITLE = {Harmonic analysis and partial differential equations
              ({C}hicago, {IL}, 1996)},
    SERIES = {Chicago Lectures in Math.},
     PAGES = {223--239},
 PUBLISHER = {Univ. Chicago Press, Chicago, IL},
      YEAR = {1999},
      ISBN = {0-226-10456-7},
   MRCLASS = {58J05 (35B60)},
  MRNUMBER = {1743865},
MRREVIEWER = {Philip\ W.\ Schaefer},
}

@article {BM23,
    AUTHOR = {Burq, Nicolas and Moyano, Iv\'{a}n},
     TITLE = {Propagation of smallness and control for heat equations},
   JOURNAL = {J. Eur. Math. Soc. (JEMS)},
  FJOURNAL = {Journal of the European Mathematical Society (JEMS)},
    VOLUME = {25},
      YEAR = {2023},
    NUMBER = {4},
     PAGES = {1349--1377},
      ISSN = {1435-9855,1435-9863},
   MRCLASS = {35K05 (35Q93 58J35)},
  MRNUMBER = {4577966},
       DOI = {10.4171/jems/1213},
       URL = {https://doi.org/10.4171/jems/1213},
}

@article{LBS23,
      title={Quantitative unique continuation for real-valued solutions to second order elliptic equations in the plane}, 
      author={Le Balc'h, Kévin  and Diego A. Souza},
      year={2023},
      journal={arXiv eprint: 2401.00441},
}

@article {LMNN20,
    AUTHOR = {Logunov, Alexander and Malinnikova, Eugenia and Nadirashvili, Nikolai and
              Nazarov, Fedor},
     TITLE = {The {L}andis conjecture on exponential decay},
   JOURNAL = {Invent. Math.},
  FJOURNAL = {Inventiones Mathematicae},
    VOLUME = {241},
      YEAR = {2025},
    NUMBER = {2},
     PAGES = {465--508},
      ISSN = {0020-9910,1432-1297},
   MRCLASS = {35J05 (30C62 35B40)},
  MRNUMBER = {4929648},
       DOI = {10.1007/s00222-025-01340-1},
       URL = {https://doi.org/10.1007/s00222-025-01340-1},
}

@book {GT01,
    AUTHOR = {Gilbarg, David and Trudinger, Neil S.},
     TITLE = {Elliptic partial differential equations of second order},
    SERIES = {Classics in Mathematics},
      NOTE = {Reprint of the 1998 edition},
 PUBLISHER = {Springer-Verlag, Berlin},
      YEAR = {2001},
     PAGES = {xiv+517},
      ISBN = {3-540-41160-7},
   MRCLASS = {35-02 (35Jxx)},
  MRNUMBER = {1814364},
}

@incollection {EV03,
    AUTHOR = {Escauriaza, Luis and Vessella, Sergio},
     TITLE = {Optimal three cylinder inequalities for solutions to parabolic
              equations with {L}ipschitz leading coefficients},
 BOOKTITLE = {Inverse problems: theory and applications ({C}ortona/{P}isa,
              2002)},
    SERIES = {Contemp. Math.},
    VOLUME = {333},
     PAGES = {79--87},
 PUBLISHER = {Amer. Math. Soc., Providence, RI},
      YEAR = {2003},
      ISBN = {0-8218-3367-7},
   MRCLASS = {35K20 (35B45)},
  MRNUMBER = {2032008},
MRREVIEWER = {Lubomira\ G.\ Softova},
       DOI = {10.1090/conm/333/05955},
       URL = {https://doi.org/10.1090/conm/333/05955},
}

@article {IO22,
    AUTHOR = {Imekraz, Rafik and Ouhabaz, El Maati},
     TITLE = {Bernstein inequalities via the heat semigroup},
   JOURNAL = {Math. Ann.},
  FJOURNAL = {Mathematische Annalen},
    VOLUME = {382},
      YEAR = {2022},
    NUMBER = {1-2},
     PAGES = {783--819},
      ISSN = {0025-5831,1432-1807},
   MRCLASS = {58J50 (35P15 47B90 58J35)},
  MRNUMBER = {4377318},
MRREVIEWER = {Richard\ Olu\ Awonusika},
       DOI = {10.1007/s00208-021-02221-7},
       URL = {https://doi.org/10.1007/s00208-021-02221-7},
}

@article {FM10,
    AUTHOR = {Filbir, Frank and Mhaskar, Hrushikesh Narhar},
     TITLE = {A quadrature formula for diffusion polynomials corresponding
              to a generalized heat kernel},
   JOURNAL = {J. Fourier Anal. Appl.},
  FJOURNAL = {The Journal of Fourier Analysis and Applications},
    VOLUME = {16},
      YEAR = {2010},
    NUMBER = {5},
     PAGES = {629--657},
      ISSN = {1069-5869,1531-5851},
   MRCLASS = {41A17 (41A55 58J35 58J90)},
  MRNUMBER = {2673702},
MRREVIEWER = {Valery\ V.\ Karachik},
       DOI = {10.1007/s00041-010-9119-4},
       URL = {https://doi.org/10.1007/s00041-010-9119-4},
}

@article {OCP13,
    AUTHOR = {Ortega-Cerd\`a, Joaquim and Pridhnani, Bharti},
     TITLE = {Carleson measures and {L}ogvinenko-{S}ereda sets on compact
              manifolds},
   JOURNAL = {Forum Math.},
  FJOURNAL = {Forum Mathematicum},
    VOLUME = {25},
      YEAR = {2013},
    NUMBER = {1},
     PAGES = {151--172},
      ISSN = {0933-7741,1435-5337},
   MRCLASS = {58C35 (35P05 35R01)},
  MRNUMBER = {3010851},
MRREVIEWER = {E.\ S.\ Dubtsov},
       DOI = {10.1515/form.2011.110},
       URL = {https://doi.org/10.1515/form.2011.110},
}

@article {DM23,
    AUTHOR = {Decio, Stefano and Malinnikova, Eugenia},
     TITLE = {Local {B}ernstein inequalities for eigenfunctions},
   JOURNAL = {Adv. Math.},
  FJOURNAL = {Advances in Mathematics},
    VOLUME = {482},
      YEAR = {2025},
     PAGES = {Paper No. 110564},
      ISSN = {0001-8708,1090-2082},
   MRCLASS = {35J15 (35B05 35P99 41A17 58J50)},
  MRNUMBER = {4969858},
       DOI = {10.1016/j.aim.2025.110564},
       URL = {https://doi.org/10.1016/j.aim.2025.110564},
}

@article {Don92,
    AUTHOR = {Dong, Rui-Tao},
     TITLE = {Nodal sets of eigenfunctions on {R}iemann surfaces},
   JOURNAL = {J. Differential Geom.},
  FJOURNAL = {Journal of Differential Geometry},
    VOLUME = {36},
      YEAR = {1992},
    NUMBER = {2},
     PAGES = {493--506},
      ISSN = {0022-040X,1945-743X},
   MRCLASS = {58G25 (35P99)},
  MRNUMBER = {1180391},
MRREVIEWER = {Stig\ I.\ Andersson},
       URL = {http://projecteuclid.org/euclid.jdg/1214448750},
}

@article {Don95,
    AUTHOR = {Dong, Rui-Tao},
     TITLE = {A {B}ernstein type of inequality for eigenfunctions},
   JOURNAL = {J. Differential Geom.},
  FJOURNAL = {Journal of Differential Geometry},
    VOLUME = {42},
      YEAR = {1995},
    NUMBER = {1},
     PAGES = {23--29},
      ISSN = {0022-040X,1945-743X},
   MRCLASS = {58G25 (35P99 53C21)},
  MRNUMBER = {1350694},
MRREVIEWER = {Jun\ Ling},
       URL = {http://projecteuclid.org/euclid.jdg/1214457031},
}

@article {QZ19,
    AUTHOR = {Queff\'{e}lec, Hervé and Zarouf, Rachid},
     TITLE = {On {B}ernstein's inequality for polynomials},
   JOURNAL = {Anal. Math. Phys.},
  FJOURNAL = {Analysis and Mathematical Physics},
    VOLUME = {9},
      YEAR = {2019},
    NUMBER = {3},
     PAGES = {1181--1207},
      ISSN = {1664-2368,1664-235X},
   MRCLASS = {41A17 (26D05)},
  MRNUMBER = {4014863},
MRREVIEWER = {Wali\ Mohammad\ Shah},
       DOI = {10.1007/s13324-019-00294-x},
       URL = {https://doi.org/10.1007/s13324-019-00294-x},
}

@incollection {DF90,
    AUTHOR = {Donnelly, Harold and Fefferman, Charles},
     TITLE = {Growth and geometry of eigenfunctions of the {L}aplacian},
 BOOKTITLE = {Analysis and partial differential equations},
    SERIES = {Lecture Notes in Pure and Appl. Math.},
    VOLUME = {122},
     PAGES = {635--655},
 PUBLISHER = {Dekker, New York},
      YEAR = {1990},
      ISBN = {0-8247-8302-6},
   MRCLASS = {58G25 (35P05 58G18)},
  MRNUMBER = {1044811},
}

@article {DF88,
    AUTHOR = {Donnelly, Harold and Fefferman, Charles},
     TITLE = {Nodal sets of eigenfunctions on {R}iemannian manifolds},
   JOURNAL = {Invent. Math.},
  FJOURNAL = {Inventiones Mathematicae},
    VOLUME = {93},
      YEAR = {1988},
    NUMBER = {1},
     PAGES = {161--183},
      ISSN = {0020-9910,1432-1297},
   MRCLASS = {58G25 (35B60 35P05)},
  MRNUMBER = {943927},
MRREVIEWER = {P.\ G\"{u}nther},
       DOI = {10.1007/BF01393691},
       URL = {https://doi.org/10.1007/BF01393691},
}

\end{document}